\definecolor{viola}{rgb}{0.3,0,0.7}
\definecolor{ciclamino}{rgb}{0.5,0,0.5}
\def\luca #1{{\color{red}#1}}
\def\luca #1{#1}
\theoremstyle{plain}
\newtheorem{thm}{Theorem}[section]
\newtheorem{lem}[thm]{Lemma}
\newtheorem{prop}[thm]{Proposition}
\theoremstyle{definition}
\newtheorem{rmk}[thm]{Remark}
\def\En{\mathbb{N}}
\def\Ar{\mathbb{R}}
\def\Pi{\mathbb{P}}
\def\eps{\varepsilon}
\def\beq{\begin{equation}}
\def\eeq{\end{equation}}
\def\rarr{\rightarrow}
\def\l|{\left\|}
\def\r|{\right\|}
\def\L2{L^2(D)}
\def\H1{H^1(D)}
\def\gH1{H^1_{0}(D)}
\def\nL2#1{\l|#1\r|_{\L2}}
\def\nH1#1{\l|#1\r|_{\H1}}
\def\rarrw{\rightharpoonup}
\def\Div{\mathop{\rm div}}
\def\gL2{L^2(\Gamma)}
\def\ngL2#1{\l|#1\r|_{\gL2}}
\def\weakstar{\stackrel{*}{\rightharpoonup}}
\def\embed{\hookrightarrow}
\def\cembed{\stackrel{c}{\hookrightarrow}}
\def\f{\mathcal{F}}
\def\rsz{\mathcal{R}}
\def\exval{\mathbb{E}}
\def\cL{\mathscr{L}}
\begin{document}
\begin{center}
{\huge\rm Well-posedness for a class of doubly nonlinear\\[0.2cm] stochastic PDEs of divergence type\/\footnote{{\bf 
Acknowledgments.}\quad\rm The author is very grateful to Carlo Marinelli for suggesting the problem
and for his expert advice and fundamental support throughout this project.}}
\\[1cm]
% \vskip-2cm
{\large\sc Luca Scarpa}\\[0.15cm]
{\normalsize e-mail: {\tt luca.scarpa.15@ucl.ac.uk}}\\[.15cm]
{\small Department of Mathematics, University College London}\\[.15cm]
{\small Gower Street, London WC1E 6BT, United Kingdom}\\[1.25cm]
\end{center}       

\begin{abstract}
We prove well-posedness for doubly nonlinear parabolic stochastic partial differential equations
of the form $dX_t-\Div\gamma(\nabla X_t)\,dt+\beta(X_t)\,dt\ni B(t,X_t)\,dW_t$, where $\gamma$
and $\beta$
are the two nonlinearities, assumed to be multivalued maximal monotone operators everywhere defined on $\Ar^d$ and $\Ar$
respectively, and $W$ is a cylindrical Wiener process. Using variational techniques,
suitable uniform estimates (both pathwise and in expectation) and some compactness results,
well-posedness is proved 
under the classical Leray-Lions conditions on $\gamma$ and with no restrictive smoothness or growth assumptions on
$\beta$. The operator $B$ is assumed to be Hilbert-Schmidt and to satisfy some classical
Lipschitz conditions in the second variable. \\[.5cm]
{\bf AMS Subject Classification:} 60H15, 35R60, 35K55, 35D30, 47H05, 46N30.\\[.5cm]
{\bf Key words and phrases:} doubly nonlinear stochastic equation, divergence,
variational approach, existence of solutions, continuous dependence, multiplicative noise.
\end{abstract}

\pagestyle{myheadings}
\newcommand\testopari{\sc Luca Scarpa}
\newcommand\testodispari{\sc Well-posedness for doubly nonlinear SPDEs of divergence type}
\markboth{\testodispari}{\testopari}

%%%%%%%%%%%%%%%%%%%%%%%%%%%%%%%%%%%%%%%%%%%%%%%%%%%%%%%%%%%%%%%%%%%%%%

\thispagestyle{empty}

\section{Introduction}
\setcounter{equation}{0}
\label{intro}

In this work, we consider the boundary value problem with homogeneous Dirichlet conditions
associated to a doubly nonlinear parabolic stochastic partial differential equation
on an smooth bounded domain $D\subseteq\Ar^d$ of the type
\begin{gather}
  \label{eq}
  dX_t-\Div\gamma(\nabla X_t)\,dt+\beta(X_t)\,dt\ni B(t, X_t)\,dW_t \quad\text{in } D\times(0,T)\,,\\
  \label{init}
  X(0)=X_0 \quad\text{in } D\,,\\
  \label{bound}
  X=0 \quad\text{on } \partial D\times(0,T)\,,
\end{gather}
where $\gamma$ and
$\beta$ are two maximal monotone operators everywhere defined on $\Ar^d$ and $\Ar$, respectively, 
$W$ is a cylindrical Wiener process, and $B$ is a random time-dependent
Hilbert-Schmidt operator
(we will state the complete assumptions on the data in the next section). We
prove existence of global solutions
as well as a continuous dependence result using variational techniques
(see e.g.~the classical works \cite{pardoux1, pardoux2, KR-spde}
about the variational approach to SPDEs).

The problem \eqref{eq}--\eqref{bound} is very interesting 
from the mathematical point of view:
as a matter of fact, the equation presents two strong nonlinearities.
The first one is represented by $\gamma$ within
the divergence operator: in this case, we will need to assume some classical growth assumptions (the 
so-called Leray-Lions conditions) in 
order to recover a suitable coercivity on a natural Sobolev space. 
The other nonlinearity is represented by the operator $\beta$: this is treated as generally as possible, with no
restriction on the growth and regularity. Because of this generality,
the concept of solution and the appropriate estimates are more difficult to achieve, as we will see.
We point also out that dealing with maximal monotone graphs makes our analysis absolutely exhaustive.
As a matter of fact, in this way we include in our treatment any continuous increasing function $\beta$ 
(with any order of growth), as well as every increasing function with a countable number 
of jumps: indeed, it is a standard matter to see that if $\beta$ is an increasing function on $\Ar$ with
jumps in $\{x_n\}_{n\in\En}$, one can obtain a maximal monotone
graph by setting $\beta(x_n)=[\beta_-(x_n), \beta_+(x_n)]$.
Finally, very mild assumptions on the noise are required, so that our results fit to any reasonable
random time-dependent Hilbert-Schmidt operator $B$; in the case of multiplicative noise,
only classical Lipschitz continuity hypotheses are in order.

The noteworthy feature of this paper is that
problem \eqref{eq}--\eqref{bound} is very general and embraces a wide variety of specific sub-problems
which are interesting on their own: consequently, we provide with our treatment a unifying analysis to
several cases of parabolic SPDEs.
Let us mention now about some of these and the main
related literature.

If $\gamma$ is the identity on $\Ar^d$, the resulting equation is the classical
semilinear SPDE driven by the Laplace operator $dX-\Delta X\,dt+\beta(X)\,dt\ni B\,dW_t$, which has been widely studied.
For example,
in \cite{mar-scar}, global existence results of solutions are provided in the semilinear case, 
with the laplacian being generalized to any suitable linear operator: here, the idea is to doubly approximate the problem, in order
to recover more regularity on $\beta$ and $B$, to find then appropriate estimates on the approximated solutions and 
finally to pass to the limit in the equation.
Moreover, in \cite{daprato}, mild solutions are obtained under the strong hypotheses that $\beta$ is a polynomial
of odd degree $m>1$ and $B$ can be written as $(-\Delta)^{-\frac{s}{2}}$ for a suitable $s$;
in \cite{barbu}, existence of mild solutions is proved with no restrictive hypotheses on the growth of $\beta$,
but imposing some strong continuity assumptions on the stochastic convolution.
In \cite{marinelli}, well-posedness is established for the semilinear problem in a $L^q$ setting,
with $\beta$ having polynomial growth.

If $\gamma$ is the monotone function on $\Ar^d$ given by $\gamma(x)=|x|^{p-2}x$, $x\in\Ar^d$, for a certain $p\geq2$, then 
the term represented by the divergence in \eqref{eq} is the usual $p$-laplacian: in this case, our equation becomes
$dX-\Delta_pX\,dt+\beta(X)\,dt\ni B\,dW_t$, where $\Delta_p\cdot:=\Div(|\nabla\cdot|^{p-2}\nabla\cdot)$.
This problem is far more interesting and complex than the semilinear case since
$-\Delta_p$ is nonlinear for any $p>2$ and consequently \eqref{eq} becomes doubly nonlinear in turn.
Among the great literature dealing with this problem, we can mention \cite{liu-plap} for example, where the stochastic
$p$-Laplace equation is studied in the singular case $p\in[1,2)$, and \cite{liu-plap2} as well.

Let us now briefly outline the structure of the paper and the results that we present.

In section \ref{results}, we state the precise assumptions of the work and we accurately describe the general setting:
here, the main hypotheses are stated and the variational setting is presented. 
Furthermore, we outline the four main results: the first theorem 
ensures that problem \eqref{eq}--\eqref{bound} admits global solutions in a suitable weak variational way
in the case of additive noise,
the second one is the very natural continuous dependence property with respect to
the initial datum and $B$, the third is the
existence result in case of multiplicative noise and the last one states the
continuous dependence property with respect to the initial datum in case of multiplicative noise.

Section \ref{exist} contains the proof of the existence theorem with additive noise: the main idea is to introduce two
approximations on the problem. The first approximation depends on a parameter~$\lambda$ and
it is made on the maximal monotone operators $\beta$ and $\gamma$, 
considering the Yosida approximations, as usual; moreover,
a correction term is added in order to recover a suitable coercivity when $\lambda$ is fixed,
and that is going to vanish when taking the limit as $\lambda\searrow0$.
The second approximation depends on a parameter $\eps$ and is 
made on the operator $B$
in order to gain more regularity on the noise.
The double approximation is very similar to the one performed in \cite{mar-scar}.
The general idea is that given a fixed approximation in $\eps$, the approximated noise is
regular enough to allow us to pass to the limit pathwise in $\lambda$: once this first step is carried out,
suitable probability estimates allow us to pass to the limit also in $\eps$.
More specifically,
the proof of existence consists in obtaining uniform estimates on the approximated solutions,
independently of the approximations, and then passing to the limit in the approximated problem.
To this purpose, we will recover pathwise estimates which are uniform in $\lambda$ (but not in $\eps$),
and global estimates also in expectation which are uniform both in $\lambda$ and in $\eps$.
The passage to the limit is carried out in two steps:
the first is on $\lambda$ and it is made pathwise, while the second is made on $\eps$ and
is made globally also in probability.
The main idea is to use It\^o's formula and some sharp testings to obtain
$L^1$ estimates on the nonlinear terms in $\beta$ and rely on the Dunford-Pettis theorem to recover a 
weak compactness, being inspired in this sense by some calculations performed in \cite{barbu, mar-scar}.

Section \ref{cont_dep} is devoted to proving the continuous dependence result for the additive noise case,
which easily follows from the definition of solution itself and a generalized It\^o formula, which is accurately proved
in the Appendix \ref{B_app}.

Section \ref{mult} contains the proof of the main result, which ensures that the problem with 
multiplicative noise is well-posed: here, we build the global solutions step-by-step, proving at each iteration
accurate contraction estimates and using classical fixed-points arguments. The continuous dependence follows from
the generalized version of It\^o's formula contained in Appendix \ref{B_app}.

The appendixes \ref{A_app} and \ref{B_app} contain a version of a variational integration-by-parts formula and
the generalized It\^o formula, which are widely used
throughout the paper: the first one is made pathwise and it is used when passing to the limit on $\lambda$ in order
to identify the limit of the nonlinearity in $\gamma$, while the second is a direct generalization of the classical 
It\^o formula in a variational setting, and it is needed in the passage to the limit on $\eps$ and in the 
proof of the continuous dependence. The idea of the proof is to identify
accurate approximations on the processes which have to satisfy appropriate conditions, such as linearity, smoothness
properties and suitable asymptotical behaviours: in this sense, appropriate elliptic approximations are performed.

%%%%%%%%%%%%%%%%%%%%%%%%%%%%%%%%%%%%%%%%%%%%%%%%%%%%%%%%%%%%%

\section{Setting and main results}
\setcounter{equation}{0}
\label{results}

In this section we state the precise assumptions on the data of the problem and the concept of solution. Moreover, 
we present the main results which will be proved in the subsequent sections.

In the entire work, $\left(\Omega, \f, \mathbb{F}, \Pi\right)$ is a filtered probability space,
where the filtration $\mathbb{F}=\left(\f_t\right)_{t\in[0,T]}$
is assumed to satisfy the so-called "usual conditions" (i.e.~it is saturated and right continuous)
and $T>0$ is the fixed final time;
moreover, $D\subseteq\Ar^d$ is a smooth bounded domain and $Q:=D\times(0,T)$ is
the corresponding space-time cylinder. Furthermore, we set
\beq
  \label{spaces}
  H:=L^2(D)
\eeq
and we
use the symbol $(\cdot,\cdot)$ for the standard inner product of $H$. Moreover,
if $U$ is a Banach space,
we simply write $L^p(\Omega; U)$ (without specifying the $\sigma$-algebra) 
to indicate the usual class of Bochner-integrable functions $L^p(\Omega, \f, \Pi; U)$; when we are referring
to the measure space with respect to a 
particular $\sigma$-algebra of the filtration, we write explicitly $L^p(\Omega, \f_t, \Pi; U)$ for any given $t\in[0,T]$.
The symbol $C^0_w([0,T]; U)$ denotes the space of continuous functions from $[0,T]$ to
the space $U$ endowed with the weak topology.
Furthermore, if $U$ is a separable Hilbert Space, we will use the symbols $\cL(U,H)$ and $\cL_2(U,H)$ to indicate 
the spaces of the linear continuous operators and Hilbert-Schmidt operators from $U$ to $H$, respectively. 

We write "$\cdot$" for the usual scalar product in $\Ar^d$, while the symbols $\embed$ and $\cembed$
indicate a continuous and a compact-continuous inclusion between Banach spaces, respectively.
Moreover, for any constant appearing in the paper, we indicate in the subscript any quantity on which the constant depends: 
for example, we may use the notation $C_{a,b}$ to stress that the constant $C$ only depends on $a$ and $b$.
Finally, for any $x,y\in\Ar$, the notation $x\lesssim y$ means that there exists a positive constant $C$ such that $x\leq Cy$;
clearly, we write $x\lesssim_{a,b}y$ for $x\leq C_{a,b}y$, for any quantity $a,b\in\Ar$.

We can now specify the main hypotheses of our work. 
First of all, we introduce
\begin{gather}
  \label{gamma}
  \gamma:\Ar^d\rarr2^{\Ar^d} \quad\text{maximal monotone}\,, \quad D(\gamma)=\Ar^d\,, \quad 0\in\gamma(0)\\
  \label{beta}
  \beta:\Ar\rarr2^\Ar \quad\text{maximal monotone}\,, \quad D(\beta)=\Ar\,, \quad 0\in\beta(0)\\
  \label{W}
  W\quad\text{cylindrical Wiener process on } U\,,
\end{gather}
where $U$ is a suitable separable 
Hilbert space. Now,
thanks to definition \eqref{beta}, the function
\beq
  \label{j}
  j:\Ar\rarr[0,+\infty) \quad\text{proper, convex, lower semicontinuous}\,, \quad \partial j=\beta\,, \quad j(0)=0
\eeq
is well defined; furthermore, we make the assumption that also $\gamma$ is a subdifferential, i.e.~that there exists
\beq
  \label{k}
  k:\Ar^d\rarr[0,+\infty) \quad\text{proper, convex, lower semicontinuous}\,, \quad \partial k=\gamma\,,\quad k(0)=0\,.
\eeq
We denote by $k^*$ and $j^*$ the convex conjugate functions of $k$ and $j$, respectively, i.e.
\beq
  \label{k_star}
  k^*:\Ar^d\rarr[0,+\infty]\,, \quad k^*(r):=\sup_{y\in\Ar^d}\{r\cdot y-k(y)\}\,,
\eeq
\beq
  \label{j_star}
  j^*:\Ar\rarr[0,+\infty]\,, \quad j^*(r):=\sup_{y\in\Ar}\{ry-j(y)\}\,.
\eeq
The following facts from convex analysis are well-known (see for example \cite{barbu_monot, brezis}):
\begin{gather}
  \label{convex2}
  k(z)+k^*(s)=s\cdot z \iff s\in\partial k(z)\,, \qquad j(y)+j^*(r)=ry \iff r\in\partial j(y)\,,\\
  \label{convex3}
  k(z)+k^*(s)\geq s\cdot z\,, \qquad j(y)+j^*(r)\geq ry \qquad\text{for all } y,r\in\Ar\,,\, z,s\in\Ar^d\,.
\end{gather}
Throughout the paper, we will also assume that $j$ is even, i.e.
\beq
  \label{j_even}
  j(x)=j(-x) \quad\text{for every } x\in\Ar.
\eeq
\begin{rmk}
 Hypothesis \eqref{j_even} is needed in order to prove 
 the generalized It\^o formula for the solutions of our problem, which
 will be strongly used throughout the proofs. However, \eqref{j_even} can be weakened:
 the main point is that we only need $j$ to grow at the same rate both at $+\infty$ and at $-\infty$
 (cf.~\cite[p.~429]{barbu-prato-rock}).
 In order to simplify the treatment we assume \eqref{j_even}, but for sake of completeness
 we mention that we could have required a slightly weaker condition, namely
 \[
 \limsup_{|x|\rarr+\infty}\frac{j(x)}{j(-x)}<+\infty\,.
 \]
\end{rmk}
Now, for every $\delta\in(0,1)$, we introduce the resolvents and the Yosida approximations of $\gamma$ and $\beta$ as
\beq
  \label{res}
  J_\delta:=(I_d+\delta\gamma)^{-1}\,, \qquad R_{\delta}:=(I_1+\delta\beta)^{-1}\,,
\eeq 
\beq
  \label{yos}
  \gamma_\delta:=\frac{I_d-J_\delta}{\delta}\,, \qquad \beta_\delta:=\frac{I_1-R_\delta}{\delta}\,,
\eeq
where the symbol $I_m$ stands for the identity in $\Ar^m$ for any $m\in\En$.
Then, for every $\delta\in(0,1)$,
$J_\delta$, $R_\delta$, $\gamma_\delta$ and $\beta_\delta$ are single-valued, with the latter two being
$\frac{1}{\delta}$-Lipschitz continuous, and
\begin{gather}
  \label{prop_res1}
   \left|J_\delta x\right|\leq|x| \quad\text{for all } x\in\Ar^d\,,\qquad
  \left|R_\delta x\right|\leq|x| \quad\text{for all } x\in\Ar\,,\\
  \label{prop_res2}
  \gamma_\delta(x)\in\gamma\left(J_\delta x\right) \quad\text{for all } x\in\Ar^d\,, \qquad
  \beta_\delta(x)\in\beta\left(R_\delta x\right) \quad\text{for all } x\in\Ar
\end{gather}
(see for example \cite{barbu_semig, brezis}).

As we have anticipated, we need to make some assumptions on the growth of $\gamma$,
namely the so-called Leray-Lions conditions, which are widely required in the classical
literature on elliptic and parabolic PDEs
(the reader can refer here to \cite{bocc, bocc-gall, bocc2} for classical examples).
More in detail, we suppose that there are positive constants $K$, $D_1$, $D_2$ and an exponent $p\in[2,+\infty)$ such that
\begin{gather}
  \label{gamma1}
  \sup\{|y|: y\in\gamma(r)\}\leq D_1\left(1+|r|^{p-1}\right) \quad\text{for every } r\in\Ar^d\,,\\
  \label{gamma2}
  y\cdot r\geq K|r|^p-D_2 \quad\text{for every } r\in\Ar^d\,, \, y\in\gamma(r)\,.
\end{gather}
In the sequel, we will write $q:=\frac{p}{p-1}\in(1,2]$ for the conjugate exponent of $p$.

Finally, we set
\beq
  \label{V}
  V:=W^{1,p}_0(D)
\eeq 
and define the divergence operator in the
variational sense:
\beq
  \label{div}
  -\Div: L^q(D)^d\rarr V^*\,, \quad \left<-\Div u,v\right>
  :=\int_D{u\cdot\nabla v}\,,\quad u\in L^q(D)^d\,,\,v\in V\,,
\eeq
where we have used the the symbol $\left<\cdot,\cdot\right>$ for the duality pairing between $V$ and $V^*$. Here and
in the sequel, we make the natural identification
$H\cong H^*$, so that $H$ is continuously embedded in $V^*$: for every $u\in H$ and $v\in V$, we have
$\left<u,v\right>=(u,v)$.
Taking these remarks into account, we have
\beq
  V\cembed H\embed V^*\,,
\eeq
where the first inclusion is also dense.
Moreover, we set
\beq
  \label{V0}
  V_0:=H^{k}_0(D)\,, \quad k:=\left[\max\left\{\frac{d}{2}, 1+\frac{d}{2}-\frac{d}{p}\right\}\right]+1\,:
\eeq
note that with this particular choice of $k$, the classical results on Sobolev 
embeddings (see \cite[Thm.~1.5]{barbu_monot} and \cite[Thm.~219]{BG87}) ensure that
\[
  V_0\embed V \quad\text{densely}\,, \quad V_0\embed L^\infty(D)\,,
\]
so that we have
\beq
  \label{rel_V0}
  V_0\embed V\cap L^\infty(D)\,, \qquad
  V^*, L^1(D)\embed V_0^*\,.
\eeq

We can now state the four main results of the paper, which ensure that problem \eqref{eq}--\eqref{init}
is well-posed, both with additive and multiplicative noise.

\begin{thm}
  \label{th1}
  In the setting \eqref{spaces}--\eqref{rel_V0}, assume that
  \begin{gather}
  \label{ip1}
    X_0\in L^2\left(\Omega, \f_0, \Pi; H\right)\,,\\
  \label{ip2}
  B\in L^2\left(\Omega\times(0,T);\cL_2(U,H)\right) \quad\text{progressively measurable}\,,\\
  \label{ip3}
  \luca{\gamma \quad\text{is single-valued }\,;}
  \end{gather}
  then there exist
  \begin{gather}
    \label{sol1}
    X\in L^2\left(\Omega;L^\infty(0,T; H)\right)\cap L^p\left(\Omega\times(0,T); V\right)\,, \qquad
    X\in C^0_w\left([0,T]; H\right) \quad\Pi\text{-a.s.}\,,\\
    \label{sol2}
    \eta\in L^q\left(\Omega\times(0,T)\times D\right)^d\,,\\
    \label{sol3}
    \xi\in L^1\left(\Omega\times(0,T)\times D\right)\,,
  \end{gather}
  where
  $X$ and $\xi$ are predictable,
  $\eta$ is adapted, and the following relations hold:
  \begin{gather}
    \label{var_form}
    \begin{split}
    X(t)-\int_0^t{\Div\eta(s)\,ds}+\int_0^t{\xi(s)\,ds}=&X_0+\int_0^t{B(s)\,dW_s} \quad\text{in } L^1(D)\cap V^*\,,\\
    &\text{for every } t\in[0,T]\,, \quad \Pi\text{-almost surely}\,,
    \end{split}\\
    \label{incl_eta}
    \eta\in\gamma(\nabla X) \quad\text{a.e.~in } \Omega\times(0,T)\times D\,,\\
    \label{incl_xi}
    \xi\in\beta(X) \quad\text{a.e. in } \Omega\times(0,T)\times D\,,\\
    \label{j_integr}
    j(X)+j^*(\xi)\in L^1\left(\Omega\times(0,T)\times D\right)\,.
  \end{gather}
  Furthermore, if hypothesis \eqref{ip3} is not assumed, then the same conclusion is true replacing
  conditions \eqref{sol1} and \eqref{var_form} with, respectively,
  \beq
  \label{sol1_weak}
  X\in L^\infty\left(0,T;L^2(\Omega; H)\right)\cap L^p\left(\Omega\times(0,T); V\right)\cap C^0_w\left([0,T]; L^2(\Omega;H)\right)\,,
  \eeq
  \beq  \label{var_form_weak}
    \begin{split}
    X(t)-\int_0^t{\Div\eta(s)\,ds}+\int_0^t{\xi(s)\,ds}=&X_0+\int_0^t{B(s)\,dW_s} \quad\text{in } L^1(D)\cap V^*\,,\\
    & \Pi\text{-almost surely}\,, \quad\text{for every } t\in[0,T]\,.
    \end{split}
  \eeq
\end{thm}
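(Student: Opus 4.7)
The plan is to follow the double-approximation scheme sketched in the introduction. I fix $\eps\in(0,1)$ and regularize $B$ to some progressively measurable $B_\eps\in L^2(\Omega\times(0,T);\cL_2(U,V_0))$ with $B_\eps\to B$ in $L^2(\Omega\times(0,T);\cL_2(U,H))$ (for instance via an elliptic smoothing of parameter $\eps$). For $\lambda\in(0,1)$ I then replace $\gamma,\beta$ by the Lipschitz Yosida approximations $\gamma_\lambda,\beta_\lambda$ and add a higher-order linear correction vanishing with $\lambda$, in order to place the approximated equation in the classical variational setting on the Gelfand triple $V_0\embed H\embed V_0^*$. The resulting SPDE is coercive with Lipschitz drift and Hilbert-Schmidt diffusion, hence by Pardoux/Krylov-Rozovskii it admits a unique strong variational solution $X_{\lambda,\eps}$ (with predictable paths by construction).

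Next I carry out a hierarchy of estimates. It\^o's formula applied to $\|X_{\lambda,\eps}(t)\|_H^2$, together with the coercivity \eqref{gamma2} and the Hilbert-Schmidt bound on $B_\eps$, yields control of $X_{\lambda,\eps}$ in $L^\infty(0,T;L^2(\Omega;H))\cap L^p(\Omega\times(0,T);V)$ uniformly in both parameters; the Leray-Lions growth \eqref{gamma1} then gives a matching $L^q$-bound for $\eta_{\lambda,\eps}:=\gamma_\lambda(\nabla X_{\lambda,\eps})$. The crucial estimate concerns the $\beta$-nonlinearity: applying It\^o's formula to $\int_D j_\lambda(X_{\lambda,\eps})$ (with $j_\lambda$ the Moreau-Yosida regularization of $j$), justified pathwise for fixed $\eps$ by the enhanced regularity of $B_\eps$ and, in expectation, by the generalized formula of Appendix~\ref{B_app}, and using the duality relations \eqref{convex2}--\eqref{convex3} together with the parity \eqref{j_even}, I obtain a pathwise-in-$\lambda$ (and global-in-expectation) bound on
$$\int_0^T\!\!\int_D\bigl[j_\lambda(X_{\lambda,\eps})+j^*_\lambda(\xi_{\lambda,\eps})\bigr],\qquad \xi_{\lambda,\eps}:=\beta_\lambda(X_{\lambda,\eps}).$$
By the de~la~Vall\'ee-Poussin/Dunford-Pettis criterion this makes $\{\xi_{\lambda,\eps}\}$ uniformly integrable, hence weakly relatively compact, in $L^1(\Omega\times(0,T)\times D)$.

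I then pass to the limit first in $\lambda$ pathwise for fixed $\eps$ and then in $\eps$ globally in probability. After extracting subsequences one obtains weak limits $(X_\eps,\eta_\eps,\xi_\eps)$ and ultimately $(X,\eta,\xi)$ enjoying the regularity \eqref{sol1}--\eqref{sol3}; the equation \eqref{var_form} is recovered by taking limits in the integrated form of the approximate equation, tested against elements of $V_0$, while \eqref{j_integr} follows by Fatou. The weak continuity in $C^0_w([0,T];H)$ (respectively in $C^0_w([0,T];L^2(\Omega;H))$ in the multivalued case) is read off the equation itself, since the time-integral terms live in $C^0([0,T];V_0^*)$ and the $H$-norms are essentially bounded.

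The main obstacle is the simultaneous identification of the two nonlinearities $\eta\in\gamma(\nabla X)$ and $\xi\in\beta(X)$ under mere weak convergence (weak $L^q$ for the former, weak $L^1$ for the latter). This is handled through the pathwise variational integration-by-parts formula of Appendix~\ref{A_app}, which furnishes the energy identity for $X_{\lambda,\eps}$ and lets me control the $\limsup$ of $\int_0^t\!\int_D\eta_{\lambda,\eps}\cdot\nabla X_{\lambda,\eps}+\int_0^t\!\int_D\xi_{\lambda,\eps}X_{\lambda,\eps}$ by the corresponding limit quantity involving $X_\eps$; a Minty-Browder monotonicity argument for the maximal monotone graphs $\gamma$ and $\beta$ then closes both inclusions. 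When $\gamma$ is multivalued (i.e.\ \eqref{ip3} is dropped), the pathwise version of the Appendix~\ref{A_app} formula is unavailable and the identification must be carried out in expectation; this is precisely what forces the weakened conclusions \eqref{sol1_weak}--\eqref{var_form_weak}.
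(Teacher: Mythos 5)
Your overall skeleton (regularize $B$ into $V_0$, Yosida-approximate $\gamma,\beta$ with a vanishing coercive correction, uniform estimates, Dunford--Pettis for the $\beta$-term, pathwise limit in $\lambda$ then limit in $\eps$, Minty-type identification via the Appendix formulas) matches the paper, but the step you call ``the crucial estimate'' is not the paper's and, as stated, does not work. You propose to apply It\^o's formula to $\int_D j_\lambda(X_{\lambda,\eps})$; this produces the second-order correction term $\tfrac12\sum_k\int_0^t\int_D j_\lambda''(X_{\lambda,\eps})\,|B^\eps u_k|^2$, and since $j_\lambda''=\beta_\lambda'$ is only bounded by $1/\lambda$ (no growth or smoothness of $\beta$ is assumed), this term is $O(\lambda^{-1})$ and destroys the uniformity in $\lambda$ that the whole compactness argument requires; moreover the generalized formula of Appendix~\ref{B_app} concerns $\|\cdot\|_H^2$, not convex functionals of the solution, so it does not justify this computation either. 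The paper's route is different: It\^o for $\|X^\eps_\lambda\|_H^2$ (and, pathwise, testing the shifted equation by $X^\eps_\lambda-W^\eps_B$, where $W^\eps_B\in L^\infty(Q)$ thanks to the $\eps$-regularization) gives a bound on $\int\beta_\lambda(X^\eps_\lambda)X^\eps_\lambda$, and the Fenchel identities \eqref{convex2}--\eqref{convex3} applied to $j(R_\lambda X^\eps_\lambda)+j^*(\beta_\lambda(X^\eps_\lambda))=\beta_\lambda(X^\eps_\lambda)R_\lambda X^\eps_\lambda\le\beta_\lambda(X^\eps_\lambda)X^\eps_\lambda$, together with a Young-type absorption of $\int\beta_\lambda(X^\eps_\lambda)W^\eps_B$ via $j(2W^\eps_B)$, transfer this to the $L^1$ bounds on $j^*(\beta_\lambda(X^\eps_\lambda))$ needed for de la Vall\'ee-Poussin. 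You should replace your $\int_D j_\lambda$ computation by this duality argument (or explain how to control the $j_\lambda''$ term, which with a completely general maximal monotone $\beta$ you cannot).

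A second gap: after the pathwise passage $\lambda\searrow0$ the limits $(X^\eps,\eta^\eps,\xi^\eps)$ are obtained along subsequences that a priori depend on $\omega$, so they need not even be measurable processes; asserting predictability ``by construction'' does not survive this extraction. The paper resolves this by first proving uniqueness of the limit problem -- and this is exactly where the single-valuedness \eqref{ip3} of $\gamma$ enters -- which makes the convergences hold along every sequence $\lambda_n\searrow0$ independent of $\omega$, and then recovers predictability of $X,\xi$ and adaptedness of $\eta$ via Vitali's theorem and Mazur's lemma applied to the weak $L^1$ limits. Relatedly, your explanation of why dropping \eqref{ip3} weakens the conclusion (unavailability of the pathwise Appendix~\ref{A_app} formula) is not the actual mechanism: that formula is deterministic and does not require $\gamma$ single-valued; what is lost is uniqueness, hence measurability of pathwise-constructed limits, which is why in that case the paper runs the whole limit procedure only in expectation and lands in the weaker class \eqref{sol1_weak}--\eqref{var_form_weak}.
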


\begin{rmk}
  The integral equation \eqref{var_form} is satisfied in the dual space $V_0^*$,
  but $X$ is not $V_0$-valued, so that the results provided are not
  a direct generalization of the classical concept of variational solution (cf.~\cite{prevot-rock}): 
  we can define them as a weaker type of variational
  solution, in which the integral expression holds in a dual space $V_0^*$, but the solution takes values only in a space larger than $V_0$
  ($V$ in our case). Nevertheless, the integral formulation
  \eqref{var_form} can be seen as an identity in $L^1(D)$, 
  so that the choice of $V_0$ turns out
  to be only a technical device {\em a posteriori}. 
  The fact that one cannot expect classical variational solutions for this type of problem is due to fact that
  no hypotheses on the growth of $\beta$ are assumed (in contrast to a large part of the literature).
\end{rmk}

\begin{rmk}
  Let us comment on hypothesis \eqref{ip3}. \luca{The fact that $\gamma$ is single-valued}
  (thus a continuous function) is needed in order to prove uniqueness for our problem, which in turn ensures
  some reasonable measurability properties for the processes $X$, $\eta$ and $\xi$, as we will show later on.
  On the other side, if we do not require \eqref{ip3}, the measurability of the solutions
  cannot be shown using the same argument,
  but it has to be recovered in a different way: however, in this case, the formulation that one obtains is weaker than the previous
  one, since the passage to the limit has to be carried out in $\Omega\times D$, with $t\in[0,T]$ fixed,
  and the solution $X$ is found is a larger space.
\end{rmk}

\begin{thm}
  \label{th2}
  In the setting \eqref{spaces}--\eqref{rel_V0}, assume that
  \begin{gather}
    \label{data_dep1}
    X_0^1, X_0^2 \in L^2\left(\Omega, \f_0, \Pi; H\right)\,,\\
    \label{data_dep2}
    \begin{split}
    B_1, B_2 \in L^2\left(\Omega\times(0,T); \cL_2(U,H)\right) \quad\text{progressively measurable}\,.
    \end{split}
  \end{gather}
  If hypothesis \eqref{ip3} holds and $(X_1, \eta_1, \xi_1)$, $(X_2, \eta_2, \xi_2)$ are any two corresponding
  solutions satisfying \eqref{sol1}--\eqref{j_integr}, then
  \beq
    \label{var_dep}
    \l|X_1-X_2\r|_{L^2\left(\Omega;L^\infty(0,T; H)\right)}
    \lesssim
    \l|X_0^1-X_0^2\r|_{L^2\left(\Omega; H\right)}+\l|B_1-B_2\r|_{L^2\left(\Omega\times(0,T);\cL_2(U,H)\right)}\,.
  \eeq
  In this setting, if $X_0^1=X_0^2$ and $B_1=B_2$, then $X_1=X_2$, $\eta_1=\eta_2$ and $\xi_1=\xi_2$.
  Moreover, if hypothesis \eqref{ip3} is not assumed and $(X_1, \eta_1, \xi_1)$, $(X_2, \eta_2, \xi_2)$ are any two corresponding solutions
  satisfying \eqref{sol2}--\eqref{sol3} and \eqref{incl_eta}--\eqref{var_form_weak},  then 
  \beq
  \label{weak_dep}
  \l|X_1-X_2\r|_{L^\infty\left(0,T;L^2(\Omega; H)\right)}
    \lesssim\l|X_0^1-X_0^2\r|_{L^2\left(\Omega; H\right)}+\l|B_1-B_2\r|_{L^2\left(\Omega\times(0,T);\cL_2(U,H)\right)}\,.
  \eeq
  In this setting, if $X_0^1=X_0^2$ and $B_1=B_2$, then $X_1=X_2$ and $-\Div\eta_1+\xi_1=-\Div\eta_2+\xi_2$.
\end{thm}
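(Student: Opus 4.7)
The natural strategy is to apply the generalized It\^o formula of Appendix \ref{B_app} to the squared $H$-norm of $Y:=X_1-X_2$, to exploit the monotonicity of $\gamma$ and $\beta$ in order to kill the two nonlinear terms, and to close the estimate via the Burkholder--Davis--Gundy inequality. Set $E:=\eta_1-\eta_2$, $N:=\xi_1-\xi_2$, $G:=B_1-B_2$ and $Y_0:=X_0^1-X_0^2$. Subtracting the two copies of \eqref{var_form} (respectively \eqref{var_form_weak}) gives that
\begin{equation*}
Y(t)-\int_0^t\Div E(s)\,ds+\int_0^t N(s)\,ds=Y_0+\int_0^t G(s)\,dW_s
\end{equation*}
in $L^1(D)\cap V^*$, for every $t\in[0,T]$, $\Pi$-almost surely.

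The plan is now to apply the generalized It\^o formula to $\tfrac12\|Y(t)\|_H^2$, which produces
\begin{equation*}
\tfrac12\|Y(t)\|_H^2+\int_0^t\!\!\int_D E\cdot\nabla Y\,dx\,ds+\int_0^t\!\!\int_D N\,Y\,dx\,ds=\tfrac12\|Y_0\|_H^2+\int_0^t(Y(s),G(s)\,dW_s)+\tfrac12\int_0^t\|G(s)\|_{\cL_2(U,H)}^2\,ds.
\end{equation*}
By \eqref{incl_eta}--\eqref{incl_xi} and the monotonicity of $\gamma$ and $\beta$, the two spatial integrals on the left-hand side are pointwise nonnegative and can be discarded. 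Under hypothesis \eqref{ip3}, \eqref{sol1} provides weakly $H$-continuous paths $\Pi$-a.s., hence $\sup_{s\in[0,t]}\|Y(s)\|_H^2$ is measurable; taking the supremum in time, then expectation, applying Burkholder--Davis--Gundy to the stochastic integral and using a Young splitting to absorb a multiple of $\exval\sup_{[0,t]}\|Y\|_H^2$ into the left-hand side, I obtain
\begin{equation*}
\exval\sup_{s\in[0,t]}\|Y(s)\|_H^2\lesssim\|Y_0\|_{L^2(\Omega;H)}^2+\|G\|_{L^2(\Omega\times(0,T);\cL_2(U,H))}^2\,,
\end{equation*}
which is \eqref{var_dep}; no Gronwall iteration is needed, since the right-hand side is $Y$-independent. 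Without \eqref{ip3}, \eqref{sol1_weak} only guarantees weak continuity in $L^2(\Omega;H)$, so one instead takes expectation at a fixed $t$ first, kills the stochastic integral by its zero mean, and reads off \eqref{weak_dep}.

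The uniqueness claims follow by specializing to $X_0^1=X_0^2$ and $B_1=B_2$: the estimates force $X_1=X_2$, and under \eqref{ip3} the continuity of the single-valued $\gamma$ together with $\eta_i=\gamma(\nabla X_i)$ yields $\eta_1=\eta_2$, after which the variational identity gives $\xi_1=\xi_2$; without \eqref{ip3}, only $-\Div\eta_1+\xi_1=-\Div\eta_2+\xi_2$ can be concluded directly from the integral equation. The main technical obstacle is the legitimacy of the It\^o expansion of $\|Y\|_H^2$: since $N$ lies only in $L^1$, there is no natural Gelfand-triple duality $\langle N,Y\rangle$ to lean on, and the integrand $N\,Y$ has to be controlled through the convex-analytic Young inequalities $|\xi_i X_j|\le j(X_j)+j^*(\xi_i)$, which are integrable thanks to \eqref{j_integr} applied to each solution, combined with the evenness assumption \eqref{j_even}; the term $E\cdot\nabla Y$ is treated analogously through $k$ and $k^*$. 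Covering precisely this $L^q+L^1$ setting is the very purpose of the generalized It\^o formula of Appendix \ref{B_app}, and invoking it is the only nonstandard step of the whole argument.
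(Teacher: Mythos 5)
Your proposal is correct and follows essentially the same route as the paper: subtract the two copies of \eqref{var_form} (resp.\ \eqref{var_form_weak}), apply the generalized It\^o formula of Proposition \ref{prop_ito} (whose hypothesis \eqref{ito5} is checked with $\alpha=1/2$ via convexity and the evenness \eqref{j_even}, exactly the convex-analytic control you describe), discard the nonlinear terms by monotonicity, and close with Burkholder--Davis--Gundy and Young in the strong case, using the expectation version \eqref{ito_for'} when \eqref{ip3} is dropped; the uniqueness arguments also coincide. The only cosmetic difference is that the gradient term $\left(\eta_1-\eta_2\right)\cdot\nabla(X_1-X_2)$ needs no treatment through $k$, $k^*$, since it is in $L^1$ directly by H\"older from \eqref{sol1}--\eqref{sol2}.
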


\begin{rmk}
  The uniqueness result strongly depends on the assumption \eqref{ip3}. Indeed, if \eqref{ip3} is in order,
  uniqueness holds for the three solution components, separately; on the other side, if we do not assume \eqref{ip3},
  we can only recover uniqueness for $X$ and the joint process $-\Div\eta+\xi$.
  Moreover, note that the nonlinearity $\gamma$ prevents us from finding a continuous dependence estimate
  also in the space $L^p(\Omega\times(0,T);V)$ for any $p>2$. Nevertheless, if 
  $p=2$ and $\gamma$ is the identity, the operator $-\Delta$ is linear
  and we can recover continuous dependence also in $L^2(\Omega\times(0,T);V)$, for which we refer to \cite{mar-scar}.
\end{rmk}

\begin{thm}
  \label{thm3}
  In the setting \eqref{spaces}--\eqref{rel_V0}, assume that
  \begin{gather}
    \label{ip1_mult}
    X_0\in L^2\left(\Omega,\f_0, \Pi; H\right)\,,\\
    \label{ip2_mult}
    B: \Omega\times[0,T]\times H\rarr \cL_2(U,H)\quad
    \text{ progressively measurable}\,,\\
    \label{ip3_mult}
    \begin{split}
    \exists\; L_B>0 \;: \quad&\l|B(\omega,t,x_1)-B(\omega, t, x_2)\r|_{\cL_2(U,H)}\leq L_B\l|x_1-x_2\r|_H\\
    &\text{for every}\quad (\omega, t)\in\Omega\times[0,T]\,, \;\;x_1, x_2\in H\,,
    \end{split}\\
   \label{ip4_mult}
   \begin{split}
    \exists\; R_B>0 \;: \quad&\l|B(\omega,t,x)\r|_{\cL_2(U,H)}\leq R_B\left(1+\l|x\r|_H\right)
    \quad\forall\; (\omega, t, x)\in\Omega\times[0,T]\times H\,.
    \end{split}
  \end{gather}
  If hypothesis \eqref{ip3} holds, then there exists a triplet $(X,\eta,\xi)$ satisfying conditions \eqref{sol1}--\eqref{sol3}, \eqref{incl_eta}--\eqref{j_integr} and
  \beq
    \label{var_form_mult}
    \begin{split}
    X(t)-\int_0^t{\Div\eta(s)\,ds}+\int_0^t{\xi(s)\,ds}=&X_0+\int_0^t{B(s, X(s))\,dW_s} \quad\text{in } L^1(D)\cap V^*\,,\\
    &\text{for every } t\in[0,T]\,, \quad\Pi\text{-almost surely}\,.
    \end{split}
    \eeq
    If hypothesis \eqref{ip3} is not assumed, then the same conclusion
    is true replacing \eqref{sol1} with \eqref{sol1_weak},
    and condition \eqref{var_form_mult} with
  \beq
  \label{var_form_mult_weak}
    \begin{split}
    X(t)-\int_0^t{\Div\eta(s)\,ds}+\int_0^t{\xi(s)\,ds}=&X_0+\int_0^t{B(s, X(s))\,dW_s} \quad\text{in } L^1(D)\cap V^*\,,\\
    &\Pi\text{-almost surely}\,, \quad\text{for every } t\in[0,T]\,.
    \end{split}
  \eeq
\end{thm}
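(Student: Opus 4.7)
The plan is to obtain a solution by a Picard-type fixed point argument built on the additive-noise well-posedness results Theorems \ref{th1} and \ref{th2}, solving on a short time interval whose length depends only on the Lipschitz constant $L_B$ and then iterating to cover $[0,T]$.

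Given any predictable $Y\in L^2(\Omega;L^\infty(0,t_1;H))$ (or in $L^\infty(0,t_1;L^2(\Omega;H))$ in the weak setting), the composition $\widetilde B_Y(\omega,t):=B(\omega,t,Y(\omega,t))$ is progressively measurable since $B$ is Carathéodory and $Y$ is predictable; by the linear growth assumption \eqref{ip4_mult} one has $\widetilde B_Y\in L^2(\Omega\times(0,t_1);\cL_2(U,H))$. Applying Theorem \ref{th1} to the initial datum $X_0$ and noise coefficient $\widetilde B_Y$ yields a solution triplet $(X,\eta,\xi)$ of the additive-noise problem satisfying \eqref{sol1}--\eqref{j_integr} (or the weak counterparts if \eqref{ip3} is not assumed). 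Define the map $\Phi(Y):=X$; by Theorem \ref{th2}, the $H$-component of the solution is uniquely determined by the data, so $\Phi$ is well-defined as a map into $L^\infty(0,t_1;L^2(\Omega;H))$.

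For the contraction estimate, take $Y_1,Y_2$ and let $X_i:=\Phi(Y_i)$. Applying the continuous dependence estimate \eqref{var_dep} (or \eqref{weak_dep}) with the same initial datum and noises $\widetilde B_{Y_1},\widetilde B_{Y_2}$, together with the Lipschitz hypothesis \eqref{ip3_mult},
\[
\|X_1-X_2\|_{L^\infty(0,t_1;L^2(\Omega;H))}^2
\lesssim L_B^2\int_0^{t_1}\|Y_1(s)-Y_2(s)\|_{L^2(\Omega;H)}^2\,ds
\lesssim L_B^2\,t_1\,\|Y_1-Y_2\|_{L^\infty(0,t_1;L^2(\Omega;H))}^2.
\]
Choosing $t_1>0$ so that the resulting constant $CL_B^2 t_1<1$ (note that $t_1$ depends only on $L_B$ and the universal constant in Theorem \ref{th2}), $\Phi$ becomes a strict contraction on the complete metric space of predictable processes in $L^\infty(0,t_1;L^2(\Omega;H))$, hence admits a unique fixed point $X$. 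The associated $\eta,\xi$ provided by Theorem \ref{th1} then satisfy all the desired properties on $[0,t_1]$ and equation \eqref{var_form_mult} (respectively \eqref{var_form_mult_weak}) holds.

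To extend to $[0,T]$, observe that $t_1$ is independent of $X_0$; using $X(t_1)\in L^2(\Omega,\f_{t_1},\Pi;H)$ as the new initial datum (well-defined thanks to $X\in C^0_w([0,t_1];H)$ and the $L^2$ bound) one repeats the above construction on $[t_1,2t_1]$, and after finitely many steps reaches $T$. Concatenating the pieces yields processes satisfying \eqref{sol1}--\eqref{j_integr} and \eqref{var_form_mult} on the whole interval; predictability/adaptedness and the variational inclusions are preserved by the gluing because the relations hold pathwise on each sub-interval. The main delicacy I anticipate is a careful verification that the fixed-point argument can be run in the non single-valued setting: here $\Phi$ is still single-valued in the $X$ component (by \eqref{weak_dep}), so the contraction argument goes through unchanged, and the triplet structure is recovered by selecting any associated $(\eta,\xi)$ from Theorem \ref{th1}; no separate uniqueness for $\eta,\xi$ is needed. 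A second routine check is the progressive measurability of $\widetilde B_Y$, which requires only the joint measurability of $B$ in $(\omega,t,x)$ and continuity in $x$ inherited from \eqref{ip3_mult}.
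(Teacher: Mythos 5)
Your proposal is correct and follows essentially the same route as the paper: a short-time fixed-point argument with interval length depending only on $L_B$ (the paper takes $\tau=\tfrac{1}{2L_B}$), where the map $Y\mapsto X$ is defined through the additive-noise existence Theorem \ref{th1} applied to $B(\cdot,\cdot,Y)$, the contraction is obtained from the continuous-dependence/It\^o estimate with equal initial data plus the Lipschitz hypothesis \eqref{ip3_mult}, and the solution is then built step-by-step on $[0,\tau],[\tau,2\tau],\dots$ using $X(n\tau)$ as the new initial datum. The only cosmetic differences are that the paper proves the contraction directly via the generalized It\^o formula (Lemma \ref{lem_tau}) in the norm of $L^2(\Omega\times(n\tau,(n+1)\tau)\times D)$ rather than quoting \eqref{var_dep}/\eqref{weak_dep}, and works in that $L^2$ space instead of your $L^\infty(0,t_1;L^2(\Omega;H))$; both choices are equivalent for the argument.
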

\begin{thm}
  \label{thm4}
  In the setting \eqref{spaces}--\eqref{rel_V0}, let $X_0^1, X_0^2$ satisfy condition \eqref{data_dep1}.
  If hypothesis \eqref{ip3} holds and $(X_1, \eta_1, \xi_1)$, $(X_2, \eta_2, \xi_2)$ are any two corresponding solutions satisfying 
  \eqref{sol1}--\eqref{sol3}, \eqref{incl_eta}--\eqref{j_integr} and \eqref{var_form_mult}, then
  \beq
    \label{var_dep_mult}
    \l|X_1-X_2\r|_{L^2\left(\Omega;L^\infty(0,T; H)\right)}
    \lesssim
    \l|X_0^1-X_0^2\r|_{L^2\left(\Omega; H\right)}\,.
  \eeq
  In this setting, if $X_0^1=X_0^2$, then $X_1=X_2$, $\eta_1=\eta_2$ and $\xi_1=\xi_2$.
  Moreover, if hypothesis \eqref{ip3} is not assumed and $(X_1, \eta_1, \xi_1)$, $(X_2, \eta_2, \xi_2)$ are any two corresponding solutions
  satisfying \eqref{sol2}--\eqref{sol3}, \eqref{incl_eta}--\eqref{sol1_weak} and \eqref{var_form_mult_weak},  then 
  \beq
  \label{var_dep_mult_weak}
  \l|X_1-X_2\r|_{L^\infty\left(0,T;L^2(\Omega; H)\right)}
    \lesssim\l|X_0^1-X_0^2\r|_{L^2\left(\Omega; H\right)}\,.
  \eeq
  In this setting, if $X_0^1=X_0^2$, then $X_1=X_2$ and $-\Div\eta_1+\xi_1=-\Div\eta_2+\xi_2$.
\end{thm}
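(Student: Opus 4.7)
The plan is to argue exactly as in Theorem~\ref{th2}, the additive-noise continuous dependence, with the extra contribution coming from the $B$-term now handled through its Lipschitz continuity in $x$. Subtracting the two equations \eqref{var_form_mult} (respectively \eqref{var_form_mult_weak}) satisfied by the triplets $(X_i,\eta_i,\xi_i)$, the difference $Y:=X_1-X_2$ satisfies, in the appropriate dual sense,
\[
dY_t - \Div(\eta_1-\eta_2)\,dt + (\xi_1-\xi_2)\,dt = \bigl(B(t,X_1)-B(t,X_2)\bigr)dW_t, \qquad Y(0)=X_0^1-X_0^2.
\]
The first step is to apply the generalized It\^o formula from Appendix~\ref{B_app} to $\tfrac12\|Y(t)\|_H^2$. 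This is the place where one must use the regularity packaged in \eqref{sol1}--\eqref{sol3} together with \eqref{j_integr}; the integrability $j(X_i)+j^*(\xi_i)\in L^1$ is precisely what legitimates testing the difference of equations by $Y$ itself even though $\xi_i\in L^1$ only.

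Once the It\^o expansion is written, the two monotonicity terms are dropped: by \eqref{incl_eta} and monotonicity of $\gamma$,
\[
\int_0^t\!\!\int_D(\eta_1-\eta_2)\cdot\nabla(X_1-X_2)\,dx\,ds\ge 0,
\]
and by \eqref{incl_xi} and monotonicity of $\beta$,
\[
\int_0^t\!\!\int_D(\xi_1-\xi_2)(X_1-X_2)\,dx\,ds\ge 0.
\]
The quadratic variation contribution is bounded using the Lipschitz assumption \eqref{ip3_mult}:
\[
\int_0^t\|B(s,X_1(s))-B(s,X_2(s))\|_{\cL_2(U,H)}^2\,ds \le L_B^2\int_0^t\|Y(s)\|_H^2\,ds.
\]
Hence, after discarding the monotone terms, one arrives pathwise at
\[
\tfrac12\|Y(t)\|_H^2 \le \tfrac12\|Y(0)\|_H^2 + \tfrac{L_B^2}{2}\!\int_0^t\!\|Y(s)\|_H^2\,ds + M(t),
\]
where $M(t):=\int_0^t(Y(s),(B(s,X_1(s))-B(s,X_2(s)))\,dW_s)_H$ is a local martingale.

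In the single-valued case \eqref{ip3}, take the supremum over $[0,t]$ inside the expectation and control $\exval\sup_{s\le t}|M(s)|$ via Burkholder--Davis--Gundy, followed by the standard Young inequality trick absorbing $\tfrac12\exval\sup_{s\le t}\|Y(s)\|_H^2$ on the right; a Gronwall argument on $\varphi(t):=\exval\sup_{s\le t}\|Y(s)\|_H^2$ then yields \eqref{var_dep_mult} and, when $X_0^1=X_0^2$, the pathwise equality $X_1=X_2$. Uniqueness of $\eta_i$ and $\xi_i$ separately follows by inserting $X_1=X_2$ into the equation and arguing as in the corresponding step of Theorem~\ref{th2}: here \eqref{ip3}, forcing $\gamma$ to be a continuous function, is precisely what separates $\eta$ from $\xi$. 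In the multivalued setting, one instead takes expectation first, uses the martingale property of $M$ to kill its expectation, and applies Gronwall directly to $t\mapsto \exval\|Y(t)\|_H^2$, obtaining \eqref{var_dep_mult_weak}; only the joint quantity $-\Div\eta+\xi$ is then identified uniquely, by difference in the equation.

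The main obstacle I anticipate is the rigorous application of the It\^o formula to $\|Y\|_H^2$: the process $-\Div(\eta_1-\eta_2)+(\xi_1-\xi_2)$ only lies in $V_0^*$ and the $\xi$-component is merely $L^1$, so the usual variational It\^o formula does not apply directly and one must invoke the generalized version developed in Appendix~\ref{B_app}, checking that $Y$, together with the absorbed $j(X_i),j^*(\xi_i)\in L^1$ controls, satisfies its hypotheses. Once that formula is available, the remainder of the argument is the standard monotonicity-plus-Gronwall scheme.
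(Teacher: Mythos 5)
Your proposal is correct, and its core is the same as the paper's: subtract the two equations \eqref{var_form_mult}, apply the generalized It\^o formula of Proposition \ref{prop_ito} to $Y=X_1-X_2$ (with the admissibility check via $j\bigl(\tfrac{X_1-X_2}{2}\bigr)+j^*\bigl(\tfrac{\xi_1-\xi_2}{2}\bigr)\le\tfrac12\bigl(j(X_1)+j(X_2)+j^*(\xi_1)+j^*(\xi_2)\bigr)$, which is where the evenness \eqref{j_even} enters --- worth stating explicitly), drop the two monotone terms, use \eqref{ip3_mult} on the quadratic variation, and in the single-valued case control the martingale via Burkholder--Davis--Gundy and Young. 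Where you genuinely diverge is the closing step under \eqref{ip3}: you run a Gronwall argument on $\varphi(t)=\exval\sup_{s\le t}\|Y(s)\|_H^2$ (legitimate, since $\varphi$ is nondecreasing and finite by \eqref{sol1}, and $\exval\|Y(s)\|_H^2\le\varphi(s)$), whereas the paper instead absorbs the Lipschitz contribution by restricting to a time interval of length $\tau$ small enough that the resulting factor $C\tau$ is below $1$, and then iterates over $[0,\tau],[\tau,2\tau],\dots$, checking that $\tau$ can be chosen uniformly in the step index. Your route is slightly more direct (no partition, no induction, no uniformity-in-$n$ check), while the paper's iteration mirrors the stepwise fixed-point construction used in its existence proof; both yield \eqref{var_dep_mult}. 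In the multivalued case your argument (expectation form of the It\^o formula, i.e.\ \eqref{ito_for'}, plus Gronwall) coincides with the paper's, and your identification of the uniqueness conclusions --- $\eta_1=\eta_2$, $\xi_1=\xi_2$ separately only under \eqref{ip3}, and only $-\Div\eta+\xi$ otherwise --- matches as well.
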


\begin{rmk}
It is worth recalling the classical approach to problem \eqref{eq}--\eqref{bound} in the deterministic case
and the main differences with the stochastic case.
The corresponding deterministic problem is
\[
  \frac{\partial u}{\partial t}-\Div\gamma(\nabla u)+\beta(u)\ni f\,, \quad u(0)=u_0\,,
\]
with homogeneous boundary conditions for $u$: here, the classical approach consists in proving 
that the sum of the two operators $-\Div(\nabla\cdot)$ and $\beta(\cdot)$ is $m$-accretive in a suitable space.
To this end, it is well-known that if $(i)$ $E$ is a Banach space with uniformly convex dual $E^*$, $(ii)$
$A$ and $B$ are two $m$-accretive sets in $E\times E$, $(iii)$ $D(A)\cap D(B)\neq\emptyset$,
$(iv)$ $\left<Au, J(B_\lambda u)\right>_{E}\geq0$ for every $u\in D(A)$ and $\lambda\in(0,1)$
(where $J:E\rarr E^*$ is the duality mapping of $E$ and $B_\lambda$ is the Yosida approximation of $B$),
then $A+B$ is $m$-accretive in $E\times E$ (see \cite[Prop.~3.8]{barbu_monot}).
If we take for example $E=L^s(D)$ for $1<s<+\infty$, $A=-\Div\gamma(\nabla\cdot)$,
$B=\beta(\cdot)$ with their natural domains,
we only need to check $(iv)$, since $(i)$--$(iii)$ are clearly satisfied. To this aim, we need to handle the term
\[
\int_D{-\Div\gamma(\nabla u)\phi(\beta_\lambda(u))}\,,
\]
where $\phi(r)=|r|^{s-2}r$, $r\in\Ar$, using integration by parts. The first problem occurs if $s<2$,
since in this case the derivative of $\phi$ explodes at $0$;
if $s\geq2$, we can proceed formally and recover
\[
\int_D\phi'(\beta_\lambda(u))\beta'_\lambda(u)\gamma(\nabla u)\cdot\nabla u\geq0\,.
\]
The main difficulty is that $\beta_\lambda$ is not differentiable, so that one needs to rely on some
generalized chain-rules for Lipschitz functions or suitable mollifications of $\beta_\lambda$.
The problem can be seen then as a particular case of the general one
\[
  \frac{\partial u}{\partial t}+ Au \ni f\,,
\]
with $A$ purely nonlinear (multivalued) operator, for which one can rely on several classical well-posedness results.
However, the corresponding general problem in the stochastic case, i.e.
\[
du+Au\,dt\ni B\,dW_t\,,
\]
does not have a 
direct counterpart in terms of existence and uniqueness:
as a consequence, in our case the proof of $m$-accretivity
is not sufficient to ensure well-posedness, so that one needs to deal with the problem "by hand".
To this end, the variational approach is in order.
\end{rmk}

%%%%%%%%%%%%%%%%%%%%%%%%%%%%%%%%%%%%%%%%%%%%%%%%%%%%%%%%%%%%%%%%

\section{Existence with additive noise}
\setcounter{equation}{0}
\label{exist}

In this section we prove the two existence results contained in Theorem \ref{th1}:
as already mentioned, we are going to approximate the problem
using two different parameters. Uniform estimates are then proved and
we obtain 
global solutions to the original problem
by passing to the limit in a suitable topology.

\subsection{The approximated problem}

Thanks to \eqref{ip2}, for every $\eps\in(0,1)$ there exists an operator
\beq
  \label{B_eps}
  B^\eps\in L^2\left(\Omega\times(0,T);\cL_2(U,V_0)\right)
\eeq
such that:
\beq
  \label{B_eps1}
  B^\eps\rarr B 
  \quad\text{in } L^2\left(\Omega\times(0,T);\cL_2(U,H)\right) \quad\text{as } \eps\searrow0\,,
\eeq
\beq
  \label{B_eps2}
  \l|B^\eps\r|_{L^2\left(\Omega\times(0,T);\cL_2(U,H)\right)}\leq\l|B\r|_{L^2\left(\Omega\times(0,T);\cL_2(U,H)\right)}
  \quad\text{for every } \eps\in(0,1)\,.
\eeq
Indeed, if $k$ is chosen as in \eqref{V0},
then
the operator $(I-\eps\Delta)^{-k}$ maps $H$ into $V_0$ for every $\eps>0$, so that it suffices to take
$B^\eps:=(I-\eps\Delta)^{-k}B$. With this particular choice, using the fact that
the operator $(I-\eps\Delta)^{-k}:H\rarr H$ is a linear contraction converging to the identity
in the strong operator topology as $\eps\searrow0$ and the ideal property of
$\cL_2(U;H)$ in $\cL(U,H)$, we have that
\eqref{B_eps}--\eqref{B_eps2} are satisfied.

For every $\lambda\in(0,1)$ and $\eps\in(0,1)$, let us consider the approximated problem
\begin{gather}
  dX^\eps_\lambda-\Div[\gamma_{\lambda}(\nabla X^\eps_\lambda)+\lambda\nabla X^\eps_\lambda]\,dt
  +\beta_\lambda(X^\eps_\lambda)\,dt= B^\eps\,dW_t \quad\text{in } D\times(0,T)\,,\\
  X^\eps_\lambda(0)=X_0 \quad\text{in } D\,,
\end{gather}
whose
integral formulation is given by
\beq
  \label{eq'}
  \begin{split}
  &X^\eps_\lambda(t)-\int_0^t{\Div[\gamma_{\lambda}(\nabla X^\eps_\lambda(s))]\,ds}-\lambda\int_0^t{\Delta X^\eps_\lambda(s)\,ds}
  +\int_0^t{\beta_\lambda(X^\eps_\lambda(s))\,ds}\\
  &\quad=X_0+\int_0^t{B^\eps(s)\,dW_s} \quad\text{in $H^{-1}(D)$}\,,
  \quad\text{for every } t\in[0,T]\,, \quad\Pi\text{-almost surely}\,,
  \end{split}
\eeq
where here $-\Div: L^2(D)^d\rarr H^{-1}(D)$ and the laplacian is intended in the usual variational way, i.e.
\[
-\Delta:H^1_0(D)\rarr H^{-1}(D)\,, \quad \left<-\Delta u, v\right>_{H^1_0(D)}:=\int_D{\nabla u\cdot \nabla v}\,, \quad u,v\in H^1_0(D)\,.
\]

A unique solution to the approximated problem \eqref{eq'} can be
easily obtained using the classical results contained 
in \cite{KR-spde} (see also \cite[Thm.~4.2.4]{prevot-rock}). 
In fact, the operator
\beq
  \label{A_lambda}
  A_\lambda:H^1_0(D)\rarr H^{-1}(D)\,, \quad 
  A_\lambda:\phi\mapsto-\Div[\gamma_{\lambda}(\nabla\phi)+\lambda\nabla\phi]+\beta_\lambda(\phi)\,,
\eeq
is well-defined thanks to the Lipschitz continuity of $\beta_\lambda$ and $\gamma_\lambda$, and
problem \eqref{eq'} is the variational formulation with respect to
the Gelfand triple $H^1_0(D)\embed H\embed H^{-1}(D)$ of the following:
\begin{gather}
  \label{eq''}
  dX^\eps_\lambda+A_\lambda X^\eps_\lambda\,dt=B^\eps\,dW_t \quad\text{in } (0,T)\times D\,,\\
  \label{eq''_bis}
  X^\eps_\lambda(0)=X_0 \quad\text{in } D\,.
\end{gather}
In this setting, we need to check that the operator $A_\lambda$ satisfies the classical properties
of hemicontinuity, monotonicity, coercivity and boundedness, in order to recover solutions of \eqref{eq'}.
The following lemma is straightforward.

\begin{lem}
  \label{properties}
  The following conditions are satisfied for every $\lambda\in(0,1)$.
  \begin{itemize}
    \item[(H1)] (Hemicontinuity). For all $u,v,x\in H^1_0(D)$, the following map is continuous:
        \[
          s\mapsto\left<A_\lambda(u+s v),x\right>_{H^1_0(D)}\,, \quad s\in\Ar\,.
        \]
    \item[(H2)] (Monotonicity). For all $u,v\in H^1_0(D)$,
      \[
        \left<A_\lambda u-A_\lambda v, u-v\right>_{\gH1}\geq0\,.
      \]
    \item[(H3)] (Coercivity). There exists $C_1>0$ such that, for all $v\in \gH1$,
      \[
        \left<A_\lambda v, v\right>_{\gH1} \geq C_1\l|v\r|_{\gH1}^2\,.
      \]
     \item[(H4)] (Boundedness). There exists $C_2>0$ such that, for all $v\in \gH1$,
       \[
         \l|A_\lambda v\r|_{H^{-1}(D)}\leq C_2\l|v\r|_{\gH1}\,.
       \]
  \end{itemize}
\end{lem}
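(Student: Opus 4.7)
The plan is to verify (H1)--(H4) directly, using only elementary properties of the Yosida regularizations $\gamma_\lambda$ and $\beta_\lambda$: they are single-valued, monotone, $\tfrac{1}{\lambda}$-Lipschitz, and satisfy $\gamma_\lambda(0)=0$, $\beta_\lambda(0)=0$ (the last property follows from $0\in\gamma(0)$, $0\in\beta(0)$ together with \eqref{prop_res2}). Every duality pairing $\langle A_\lambda u,\phi\rangle_{\gH1}$ decomposes, by the definition of $A_\lambda$ and of the divergence in \eqref{div} (specialized to the Hilbert case $p=2$), into a diffusive part $\int_D[\gamma_\lambda(\nabla u)+\lambda\nabla u]\cdot\nabla\phi$ and a reaction part $\int_D\beta_\lambda(u)\phi$, and I would treat the two pieces separately in each item.

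For the hemicontinuity (H1), I would fix $u,v,x\in\gH1$, expand $\langle A_\lambda(u+sv),x\rangle_{\gH1}$ with $u$ replaced by $u+sv$ in the formula above, and invoke dominated convergence in $s$: the Lipschitz bounds $|\gamma_\lambda(\nabla u+s\nabla v)|\le\tfrac{1}{\lambda}|\nabla u+s\nabla v|$ and $|\beta_\lambda(u+sv)|\le\tfrac{1}{\lambda}|u+sv|$ provide the required $L^1(D)$-domination on each bounded $s$-interval. For the monotonicity (H2), I would write
\[
\langle A_\lambda u-A_\lambda v, u-v\rangle_{\gH1} = \int_D[\gamma_\lambda(\nabla u)-\gamma_\lambda(\nabla v)]\cdot\nabla(u-v) + \lambda\int_D|\nabla(u-v)|^2 + \int_D[\beta_\lambda(u)-\beta_\lambda(v)](u-v)
\]
and observe that each of the three summands is nonnegative by the monotonicity of $\gamma_\lambda$ and $\beta_\lambda$.

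For the coercivity (H3), the same decomposition applied with $u=v$, together with $\gamma_\lambda(0)=0$, $\beta_\lambda(0)=0$, forces $\int_D\gamma_\lambda(\nabla v)\cdot\nabla v\ge 0$ and $\int_D\beta_\lambda(v)\,v\ge 0$ by monotonicity, leaving $\lambda\int_D|\nabla v|^2$ as a lower bound; Poincar\'e's inequality on the bounded domain $D$ then yields $\langle A_\lambda v,v\rangle_{\gH1}\ge C_1\|v\|_{\gH1}^2$ with $C_1$ proportional to $\lambda$. For the boundedness (H4), the $\tfrac{1}{\lambda}$-Lipschitz property gives $\|\gamma_\lambda(\nabla v)\|_{\L2}\le\tfrac{1}{\lambda}\|\nabla v\|_{\L2}$ and $\|\beta_\lambda(v)\|_{\L2}\le\tfrac{1}{\lambda}\|v\|_{\L2}$, so that Cauchy--Schwarz combined with Poincar\'e produces $\|A_\lambda v\|_{H^{-1}(D)}\le C_2\|v\|_{\gH1}$ with a constant $C_2$ depending on $\lambda$, $d$, and $D$.

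I do not expect any real obstacle here, consistently with the author's own remark that the lemma is ``straightforward''. The only subtlety worth flagging is the $\lambda$-dependence hidden inside $C_1$ and $C_2$: both constants blow up as $\lambda\searrow 0$, which is harmless for invoking the variational well-posedness theory of \cite{KR-spde} at fixed $\lambda$, but means that the uniform estimates needed for the subsequent passage to the limit in $\lambda$ will have to be produced by different tools (It\^o's formula and the convex analytic structure supplied by $k$, $j$, $k^*$, $j^*$), not by reading them off directly from (H3) and (H4).
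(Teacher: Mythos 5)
Your proposal is correct and follows essentially the same route as the paper: decompose each pairing into the $\gamma_\lambda$-, $\lambda\nabla$- and $\beta_\lambda$-terms, use the $\tfrac1\lambda$-Lipschitz continuity (with $\gamma_\lambda(0)=\beta_\lambda(0)=0$) for (H1) and (H4), monotonicity for (H2), and retain only $\lambda\int_D|\nabla v|^2$ plus Poincar\'e for (H3). The paper invokes Lipschitz continuity for (H1) where you use dominated convergence, but this is the same idea, and your closing remark on the $\lambda$-dependence of $C_1,C_2$ matches how the lemma is used there.
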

\begin{proof}
  For all $u,v,x\in \gH1$ we have
  \[
    \begin{split}
    \left<A_\lambda(u+s v),x\right>_{\gH1}&=
    \int_D{\gamma_{\lambda}(\nabla(u+s v))\cdot\nabla x}
    +\lambda\int_D{\nabla (u+s v)\cdot\nabla x}
    +\int_D{\beta_\lambda(u+s v)x}\,,
    \end{split}
  \]
  so that (H1) is satisfied thanks to the Lipshitz continuity of $\gamma_{\lambda}$ and $\beta_\lambda$.
  Secondly, (H2) trivially holds using the monotonicity of $\gamma_{\lambda}$ and $\beta_\lambda$.
  Moreover, for all $v\in \gH1$, thanks to the monotonicity of $\gamma_\lambda$ and $\beta_\lambda$,
  and the fact that $\gamma(0)\ni0$ and $\beta(0)\ni0$, we have
  \[ 
    \left<A_\lambda v, v\right>_{\gH1}=
    \int_D{\gamma_{\lambda}(\nabla v)\cdot\nabla v}+\lambda\int_D{\left|\nabla v\right|^2}+\int_D{\beta_\lambda(v)v}\geq
    \lambda\int_D{\left|\nabla v\right|^2}\,,
  \]
  so that (H3) holds true thanks to the Poincaré inequality.
  Finally, using the Lipschitz continuity of $\beta_\lambda$ and $\gamma_\lambda$ and 
  the Hölder inequality,
  we have for all $u,v\in \gH1$
  \[
    \begin{split}
   \left<A_\lambda v,u\right>_{\gH1}&=
   \int_D{\gamma_{\lambda}(\nabla v)\cdot\nabla u}+\lambda\int_D{\nabla v\cdot\nabla u}+\int_D{\beta_\lambda(v)u}\\
   &\leq
   \left(\frac{1}{\lambda}+\lambda\right)\l|\nabla v\r|_H\l|\nabla u\r|_H+\frac{1}{\lambda}\l|v\r|_H\l|u\r|_H
   \leq
   \left(\frac{2}{\lambda}+\lambda\right)\l|v\r|_{\gH1}\l|u\r|_{\gH1}\,,
    \end{split}
  \]
  from which 
  (H4) follows.
\end{proof}

Lemma \ref{properties} ensures that, for all $\eps, \lambda\in(0,1)$, there exists a unique adapted process
\beq
  \label{sol_app}
  X^\eps_\lambda\in
  L^2\left(\Omega; C^0\left([0,T];H\right)\right)\cap
  L^2\left(\Omega\times(0,T);H^1_0(D)\right)
\eeq
such that
\beq
  \label{app1}
  \begin{split}
  &X^\eps_\lambda(t)-\int_0^t{\Div[\gamma_{\lambda}(\nabla X^\eps_\lambda(s))]\,ds}
  -\lambda\int_0^t{\Delta X^\eps_\lambda(s)\,ds}+\int_0^t{\beta_\lambda(X^\eps_\lambda(s))\,ds}\\
  &\quad=X_0+\int_0^t{B^\eps(s)\,dW_s} \quad\text{in $H^{-1}(D)$}\,,
  \quad\text{for every } t\in[0,T]\,, \quad\Pi\text{-almost surely}\,.
  \end{split}
\eeq

\subsection{A priori estimates I}
\label{first}

Here we prove uniform pathwise estimates on $X^\eps_\lambda$, independent of $\lambda$ (but not of $\eps$),
which will allow us to pass to the limit as $\lambda\searrow0$
in the approximated problem \eqref{app1} with $\eps$ fixed.

Let us define, for any $\eps\in(0,1)$,
\beq
  \label{stoc}
  W^\eps_B(t):=\int_0^t{B^\eps(s)\,dW_s}\,, \quad t\in[0,T]\,.
\eeq
Thanks to the Burkholder-Davis-Gundy inequality and condition \eqref{ip2} we deduce
\beq
  \label{stoc_reg}
  W^\eps_B\in L^2\left(\Omega; L^\infty(0,T;V_0)\right)\,.
\eeq
In particular, recalling \eqref{rel_V0}, we have that
\beq
  \label{stoc_reg'}
  W_B^\eps(\omega)\in L^p(0,T;V)\cap L^\infty(Q) \quad\text{for $\Pi$-almost every } \omega\in\Omega\,.
\eeq

Equation \eqref{app1} can be rewritten as
\[
  \partial_t\left(X^\eps_\lambda-W^\eps_B\right)(t)-
  \Div\left[\gamma_{\lambda}(\nabla X^\eps_\lambda(t))+\lambda\nabla X^\eps_\lambda(t)\right]
  +\beta_\lambda(X^\eps_\lambda(t))=0 \quad\text{in } H^{-1}(D)
\]
for every $t\in[0,T]$, for any $\omega$ out of a set of probability $0$ (the symbol $\partial_t$ for the derivative with respect to time makes sense only if applied to the
difference $X^\eps_\lambda-W^\eps_B$). Fix now $\omega$ and
test by $X^\eps_\lambda(t)-W^\eps_B(t)$ (see \cite[{\S}1.3]{barbu-syst}): we obtain 
\beq
  \label{test_lam_eps}
  \begin{split}
  \frac{1}{2}\l|X^\eps_\lambda(t)-W^\eps_B(t)\r|^2_H&+
  \int_0^t\int_D{\gamma_{\lambda}(\nabla X^\eps_\lambda(s))\cdot\nabla(X^\eps_\lambda(s)-W^\eps_B(s))\,ds}\\
  &+\lambda\int_0^t\int_D{\nabla X^\eps_\lambda(s)\cdot\nabla\left(X^\eps_\lambda(s)-W^\eps_B(s)\right)\,ds}\\
  &+\int_0^t{\int_D\beta_\lambda(X^\eps_\lambda(s))(X^\eps_\lambda(s)-W^\eps_B(s))\,ds}=\frac{1}{2}\l|X_0\r|^2_H\,.
  \end{split}
\eeq
Using the identity $I_d=\lambda\gamma_\lambda+J_\lambda$ and rearranging terms in the previous relation, we have
\[
  \begin{split}
  \frac{1}{2}&\l|X^\eps_\lambda(t)-W^\eps_B(t)\r|^2_H+
  \int_0^t\int_D{\gamma_{\lambda}(\nabla X^\eps_\lambda(s))\cdot J_\lambda\left(\nabla X^\eps_\lambda(s)\right)\,ds}
  +\lambda\int_0^t\int_D{\left|\gamma_\lambda\left(\nabla X^\eps_\lambda(s)\right)\right|^2\,ds}\\
  &\qquad\qquad\qquad+\lambda\int_0^t\int_D{|\nabla X^\eps_\lambda(s)|^2\,ds}
  +\int_0^t{\int_D\beta_\lambda(X^\eps_\lambda(s))(X^\eps_\lambda(s)-W^\eps_B(s))\,ds}\\
  &=\frac{1}{2}\l|X_0\r|^2_H+\int_0^t\int_D{\gamma_\lambda\left(\nabla X^\eps_\lambda(s)\right)\cdot\nabla W^\eps_B(s)\,ds}
  +\lambda\int_0^t\int_D{\nabla X^\eps_\lambda(s)\cdot\nabla W^\eps_B(s)\,ds}\,.
  \end{split}
\]
Using the generalized Young inequality
of the form $ab\leq\delta\frac{p-1}{p}a^{\frac{p}{p-1}}+C_{\delta,p} b^p$
(for any $a,b,\delta>0$ and a certain $C_{\delta, p}>0$)
on the second term on the right-hand side,
thanks also to hypotheses \eqref{gamma1}--\eqref{gamma2} and condition \eqref{prop_res2} we deduce
for every $t\in[0,T]$ that
\[
  \begin{split}
  \frac{1}{2}&\l|X^\eps_\lambda(t)-W^\eps_B(t)\r|^2_H+
  K\int_0^t{\l|J_\lambda\left(\nabla X^\eps_\lambda(s)\right)\r|^p_{L^p(D)}\,ds}
  +\lambda\int_0^t{\l|\gamma_\lambda\left(\nabla X^\eps_\lambda(s)\right)\r|^2_{H}\,ds}\\
  &\qquad\qquad\qquad+\lambda\int_0^t{\l|\nabla X^\eps_\lambda(s)\r|_H^2\,ds}
  +\int_0^t{\int_D\beta_\lambda(X^\eps_\lambda(s))(X^\eps_\lambda(s)-W^\eps_B(s))\,ds}\\
  &\leq C'+\frac{1}{2}\l|X_0\r|^2_H+
  \delta\frac{(p-1)D_1}{p}\int_0^t{\l|J_\lambda\left(\nabla X^\eps_\lambda(s)\right)\r|_{L^p(D)}^p\,ds}+
  C_{\delta, p}\int_0^t{\l|\nabla W^\eps_B(s)\r|^p_{L^p(D)}\,ds}\\
  &\qquad\qquad\qquad+\frac{\lambda}{2}\int_0^t{\l|\nabla X^\eps_\lambda(s)\r|^2_H\,ds}
  +\frac{\lambda}{2}\int_0^t{\l|\nabla W^\eps_B(s)\r|^2_H\,ds}
  \end{split}
\]
for a positive constants $C'$ independent of $\lambda$ and $\eps$.
Hence, choosing $\delta=\frac{Kp}{2D_1(p-1)}$, we get that, for every $t\in[0,T]$,
\beq
  \label{int_ex}
  \begin{split}
  &\l|X^\eps_\lambda(t)\r|^2_H+\frac{K}{2}\int_0^t{\l|J_\lambda\left(\nabla X^\eps_\lambda(s)\right)\r|_{L^p(D)}^p\,ds}
  +\lambda\int_0^t{\l|\gamma_\lambda\left(\nabla X^\eps_\lambda(s)\right)\r|^2_{H}\,ds}\\
  &\qquad\qquad\qquad+\frac{\lambda}{2}\int_0^t{\l|\nabla X^\eps_\lambda(s)\r|^2_H\,ds}
  +\int_0^t{\int_D\beta_\lambda(X^\eps_\lambda(s))(X^\eps_\lambda(s)-W^\eps_B(s))\,ds}\\
  &\quad\leq C'+\frac{1}{2}\l|X_0\r|^2_H+C_p\l|W^\eps_B\r|_{L^p(0,T;V)}^p+\frac{1}{2}\l|W^\eps_B\r|^2_{L^\infty(0,T;H)}
  +\frac{1}{2}\l|W_B^\eps\r|^2_{L^2(0,T;H^1_0(D))}
  \end{split}
\eeq
for a positive constant $C_p$ independent of $\lambda$ and $\eps$.
Denoting by $j_\lambda:\Ar\rarr[0,+\infty)$ the proper, convex, lower semicontinuous function such that 
$\beta_\lambda=\partial j_\lambda$ and $j_\lambda(0)=0$, one has that $j_\lambda\leq j$
and $j_\lambda(x)\nearrow j(x)$ for every $x\in\Ar$ (recall that $\Ar=D(\beta)\subseteq D(j)$).
Hence, for every $x,y\in\Ar$ we have that
\[
  \beta_\lambda(x)(x-y)\geq j_\lambda(x)-j_\lambda(y)\geq j_\lambda(x)-j(y)\,.
\]
Applying this inequality to the last term on the left-hand side of \eqref{int_ex}, we deduce that, for every $t\in[0,T]$,
\[
  \begin{split}
  &\l|X^\eps_\lambda(t)\r|^2_H+\frac{K}{2}\int_0^t{\l|J_\lambda\left(\nabla X^\eps_\lambda(s)\right)\r|_{L^p(D)}^p\,ds}
  +\lambda\int_0^t{\l|\gamma_\lambda\left(\nabla X^\eps_\lambda(s)\right)\r|^2_{H}\,ds}\\
  &\qquad\qquad\qquad+\frac{\lambda}{2}\int_0^t{\l|\nabla X^\eps_\lambda(s)\r|^2_H\,ds}
  +\int_0^t{\int_D j_\lambda(X^\eps_\lambda(s))\,ds}\\
  &\quad\lesssim 1+\l|X_0\r|^2_H+\l|W^\eps_B\r|^2_{L^p(0,T;V)}+\l|W^\eps_B\r|^2_{L^\infty(0,T;H)}
  +\l|W_B^\eps\r|^2_{L^2(0,T;H^1_0(D))}+\int_Q{j(W^\eps_B)}\,.
  \end{split}
\]
Note that all the terms on the right-hand side are finite $\Pi$-almost surely: for the first five, this is immediate
thanks to \eqref{ip1} and \eqref{stoc_reg'}, while
$j(W^\eps_B)\in L^1(Q)$ since $W^\eps_B\in L^\infty(Q)$.
Using the positivity of $j_\lambda$
we deduce that
for $\Pi$-almost every $\omega\in\Omega$
there exists a positive constant $M=M_{\omega,\eps}$, independent of $\lambda$,
such that, for every $\lambda\in(0,1)$,
\begin{gather}
  \label{est1}
  \l|X^\eps_\lambda(\omega)\r|_{L^\infty(0,T;H)}\leq M_{\omega,\eps}\,,\\
  \label{est_res}
  \l|J_\lambda\left(\nabla X^\eps_\lambda(\omega)\right)\r|_{L^p(Q)}\leq M_{\omega,\eps}\,,\\
  \label{est_gamma_lam}
  \lambda^{1/2}\l|\gamma_\lambda\left(\nabla X^\eps_\lambda(\omega)\right)\r|_{L^2(Q)}
  \leq M_{\omega, \eps}\,,\\
  \label{est_grad}
  \lambda^{1/2}\l|\nabla X^\eps_\lambda(\omega)\r|_{L^2(Q)}\leq M_{\omega, \eps}\,.
\end{gather}
Finally, by \eqref{gamma2} and \eqref{prop_res2} we also have
\[
  \int_{Q}{\left|\gamma_\lambda(\nabla X^\eps_\lambda)\right|^q}\leq
  D_1\int_{Q}{\left(1+|J_\lambda(\nabla X^\eps_\lambda)|\right)^p}\,,
\]
so that by \eqref{est_res} it follows (possibly redefining $M_{\omega, \eps}$) that, for every $\lambda\in(0,1)$,
\beq
  \label{est_gamma}
  \l|\gamma_\lambda(\nabla X^\eps_\lambda(\omega))\r|_{L^q(Q)}\leq M_{\omega, \eps}\,.
\eeq

\subsection{A priori estimates II}
\label{second}

In this section we prove some estimates in expectation on $X^\eps_\lambda$ independent both of $\lambda$ and $\eps$.
The main tool is a version of It\^o's formula in a 
variational framework.

Thanks to conditions \eqref{ip1}--\eqref{ip2} and \eqref{sol_app}--\eqref{app1}, 
we can apply It\^o's formula (see \cite[Thm.~4.2.5]{prevot-rock}), obtaining 
\beq
  \label{ito}
  \begin{split}
  \frac{1}{2}\l|X^\eps_\lambda(t)\r|^2_H&+\int_0^t\int_D{\gamma_\lambda(\nabla X^\eps_\lambda(s))\cdot\nabla X_\lambda^\eps(s)\,ds}
  +\lambda\int_0^t\int_D{\left|\nabla X^\eps_\lambda(s)\right|^2\,ds}\\
  &\qquad\qquad\qquad+\int_0^t\int_D{\beta_\lambda(X^\eps_\lambda(s))X^\eps_\lambda(s)\,ds}\\
  &=\frac{1}{2}\l|X_0\r|_H^2+
  \frac{1}{2}\int_0^t{\l|B^\eps(s)\r|_{\cL_2(U,H)}^2\,ds}
  +\int_0^t\left(X^\eps_\lambda(s), B^\eps(s)\,dW_s\right)
  \end{split}
\eeq
for every $t\in[0,T]$, $\Pi$-almost surely, which yields, by definition of $\gamma_\lambda$
and conditions \eqref{gamma2} and \eqref{prop_res2},
\[
  \begin{split}
  \frac{1}{2}\l|X^\eps_\lambda(t)\r|^2_H&+K\int_0^t{\l|J_\lambda\left(\nabla X^\eps_\lambda(s)\right)\r|^p_{L^p(D)}\,ds}
  +\lambda\int_0^t{\l|\gamma_\lambda\left(\nabla X^\eps_\lambda(s)\right)\r|^2_{H}\,ds}\\
  &\qquad\qquad\qquad+\lambda\int_0^t{\l|\nabla X^\eps_\lambda(s)\r|^2_H\,ds}
  +\int_0^t\int_D{\beta_\lambda(X^\eps_\lambda(s))X^\eps_\lambda(s)\,ds}\\
  &\leq C''+\frac{1}{2}\l|X_0\r|_H^2+
  \frac{1}{2}\l|B^\eps(s)\r|_{L^2(0,T; \cL_2(U,H))}^2
  +\sup_{t\in[0,T]}\left|\int_0^t\left(X^\eps_\lambda(s), B^\eps(s)\,dW_s\right)\right|
  \end{split}
\] 
for a constant $C''>0$, independent of $\eps$ and $\lambda$.
Thanks to Davis' inequality, the Hölder and Young inequalities, and condition \eqref{B_eps2}, we have
\[
  \begin{split}
  &\exval\sup_{t\in[0,T]}\left|\int_0^t\left(X^\eps_\lambda(s), B^\eps(s)\,dW_s\right)\right|
  \lesssim\exval\left[\left(\int_0^T\l|X^\eps_\lambda(s)\r|^2_H\l|B^\eps(s)\r|^2_{\cL_2(U,H)}\,ds\right)^{1/2}\right]\\
  &\leq\exval\left[\l|X^\eps_\lambda\r|_{L^\infty(0,T;H)}\l|B^\eps\r|_{L^2(0,T;\cL_2(U,H))}\right]
  \leq\frac{1}{4}\l|X^\eps_\lambda\r|^2_{L^2(\Omega; L^\infty(0,T;H))}+\l|B\r|^2_{L^2(\Omega\times(0,T); \cL_2(U,H))}\,:
  \end{split}
\]
consequently, taking the supremum in $t\in[0,T]$ and then expectations, we obtain
\beq
  \label{ito'}
  \begin{split}
  \frac{1}{4}\l|X^\eps_\lambda\r|^2_{L^2(\Omega; L^\infty(0,T;H))}&+
  K\l|J_\lambda\left(\nabla X^\eps_\lambda\right)\r|^p_{L^p(\Omega\times(0,T)\times D)}+
  \lambda\l|\gamma_\lambda\left(\nabla X^\eps_\lambda\right)\r|^2_{L^2(\Omega\times(0,T)\times D)}\\
  &+\lambda\l|\nabla X^\eps_\lambda\r|^2_{L^2(\Omega\times(0,T)\times D)}+
  \int_{\Omega\times Q}{\beta_\lambda(X^\eps_\lambda)X_\lambda^\eps}\\
  &\leq C''+\frac{1}{2}\l|X_0\r|^2_{L^2(\Omega; H)}+\frac{3}{2}\l|B\r|^2_{L^2(\Omega\times (0,T);\cL_2(U,H))}\,.
  \end{split}
\eeq
We infer that there exists a constant $N>0$, independent of $\lambda$ and $\eps$, such that
\begin{gather}
  \label{est1'}
  \l|X^\eps_\lambda\r|_{L^2(\Omega; L^\infty(0,T; H))}\leq N\,,\\
  \label{est_res'}
  \l|J_\lambda\left(\nabla X^\eps_\lambda\right)\r|_{L^p(\Omega\times(0,T)\times D)}\leq N\,,\\
  \label{est_gamma_lam'}
  \lambda^{1/2}\l|\gamma_\lambda\left(\nabla X^\eps_\lambda\right)\r|_{L^2(\Omega\times(0,T)\times D)}
  \leq N\,,\\
  \label{est_grad'}
  \lambda^{1/2}\l|\nabla X^\eps_\lambda\r|_{L^2(\Omega\times(0,T)\times D)}\leq N\,,
\end{gather}
for every $\eps,\lambda\in(0,1)$.
Finally, by \eqref{gamma2} and \eqref{prop_res2} we also have
\[
  \int_{\Omega\times Q}{\left|\gamma_\lambda(\nabla X^\eps_\lambda)\right|^q}\leq
  D_1\int_{\Omega\times Q}{\left(1+|J_\lambda(\nabla X^\eps_\lambda)|\right)^p}\,,
\]
so that by \eqref{est_res'} it follows (possibly redefining $N$) that, for every $\eps,\lambda\in(0,1)$,
\beq
  \label{est_gamma'}
  \l|\gamma_\lambda(\nabla X^\eps_\lambda)\r|_{L^q(\Omega\times(0,T)\times D)}\leq N\,.
\eeq

\subsection{A priori estimates III}
\label{third}

In this section we prove uniform estimates on the term $\beta_\lambda(X^\eps_\lambda)$, independent of $\lambda$
(with $\eps$ fixed), which are useful to recover a suitable weak compactness.
We rely on some computations performed in \cite{barbu} 
to obtain some $L^1$ estimates, the classical results by de la Vallée-Poussin about uniform integrability and on
the Dunford-Pettis theorem.

Firstly, let us fix $\omega\in\Omega$. 
Property \eqref{convex2}, conditions \eqref{prop_res1}--\eqref{prop_res2} and
the monotonicity of $\beta_\lambda$ imply that
\[
  j(R_\lambda X^\eps_\lambda)+j^*(\beta_\lambda(X^\eps_\lambda))=
  \beta_\lambda(X^\eps_\lambda) R_\lambda X^\eps_\lambda \leq \left|\beta_\lambda(X^\eps_\lambda)\right|\left| X^\eps_\lambda\right|=
  \beta_\lambda(X^\eps_\lambda) X^\eps_\lambda\,.
\]
Consequently, from inequality \eqref{int_ex} evaluated at time $T$ and the previous relation, recalling \eqref{stoc_reg'}
and using the generalized Young inequality of the form $ab\leq j(2a)+j^*(b/2)$ for any $a,b\in\Ar$ (see \eqref{convex3}),
we deduce that
$\Pi$-almost surely we have
\[
  \begin{split}
  \int_Q{j^*(\beta_\lambda(X^\eps_\lambda))}&\leq\int_Q{\beta_\lambda(X^\eps_\lambda)X^\eps_\lambda}
  \leq C'+ \frac{1}{2}\l|X_0\r|^2_H+C_p\l|W^\eps_B\r|_{L^p(0,T;V)}^p\\
  &\qquad\qquad+\frac{1}{2}\l|W^\eps_B\r|^2_{L^\infty(0,T;H)}+
  \frac{1}{2}\l|W^\eps_B\r|_{L^2(0,T; H^1_0(D))}^2
  +\int_Q{\beta_\lambda(X^\eps_\lambda)W^\eps_B}\\
  &\leq C'+ \frac{1}{2}\l|X_0\r|^2_H+C_p\l|W^\eps_B\r|_{L^p(0,T;V)}^p
  +\frac{1}{2}\l|W^\eps_B\r|^2_{L^\infty(0,T;H)}\\
  &\qquad\qquad+\frac{1}{2}\l|W^\eps_B\r|_{L^2(0,T; H^1_0(D))}^2
  +\l|j\left(2W^\eps_B\right)\r|_{L^1(Q)}+\frac{1}{2}\int_Q{j^*\left(\beta_\lambda(X^\eps_\lambda)\right)}\,.
  \end{split}
\]
All the terms on the right hand side are finite thanks to \eqref{ip1} and \eqref{stoc_reg'}:
hence,
since $j^*$ is even by assumption,
we have proved that 
\beq
    \label{est_j_star}
    \l|j^*\left(|\beta_\lambda(X^\eps_\lambda(\omega))|\right)\r|_{L^1(Q)}=
     \l|j^*\left(\beta_\lambda(X^\eps_\lambda(\omega))\right)\r|_{L^1(Q)}\leq
     \int_Q{\beta_\lambda(X^\eps_\lambda(\omega))X^\eps_\lambda(\omega)}\leq M_{\omega,\eps}
\eeq
for $\Pi$-almost every $\omega\in\Omega$;
moreover, since $D(\beta)=\Ar$ by \eqref{beta}, 
we have that
\[
  \lim_{|r|\rarr+\infty}{\frac{j^*(r)}{|r|}}=+\infty
\]
(see for example \cite{barbu_monot, brezis}).
Hence, using then the criterion by de la Vallée-Poussin for uniform integrability
combined with the Dunford-Pettis theorem,
we deduce that,
for $\Pi$-almost every $\omega\in\Omega$ and for every $\eps\in(0,1)$,
\beq
  \label{comp_beta}
  \left\{\beta_{\lambda}(X^\eps_\lambda)(\omega)\right\}_{\lambda\in(0,1)} \quad\text{is weakly relatively compact in } L^1\left(Q\right)\,.
\eeq

Finally, let us obtain the corresponding information also in expectation.
It easily follows from \eqref{ito'} that there exists a constant $N>0$, independent of $\lambda$ and $\eps$, such that
\[
  \l|\beta_\lambda(X^\eps_\lambda)X^\eps_\lambda\r|_{L^1(\Omega\times(0,T)\times D)}\leq N \quad\text{for every }
  \eps, \lambda\in(0,1)\,;
\]
hence, in analogy to the derivation of \eqref{est_j_star}, we get
\beq
  \label{est_j_star'}
  \int_{\Omega\times Q}{j^*(\beta_\lambda(X^\eps_\lambda))}\leq
  \l|\beta_\lambda(X^\eps_\lambda)X^\eps_\lambda\r|_{L^1(\Omega\times(0,T)\times D)}\leq N \quad\text{for every } \eps, \lambda\in(0,1)\,.
\eeq
Since $j^*$ is even and superlinear at infinity, the criterion by de la Vallée-Poussin and the Dunford-Pettis theorem imply that
\beq
  \label{comp_beta'}
  \{\beta_\lambda(X^\eps_\lambda)\}_{\eps, \lambda\in(0,1)} \quad\text{is weakly relatively compact in } L^1(\Omega\times(0,T)\times D)\,.
\eeq

\subsection{Passage to the limit as $\lambda\searrow0$}
\label{first_lim}

In this section, we pass to the limit as $\lambda\searrow0$ in the approximated problem \eqref{app1} with 
$\eps\in(0,1)$ being fixed: the idea is to pass to the limit pathwise as $\lambda\searrow0$. 
Throughout the section, $\eps\in(0,1)$ and $\omega\in\Omega$ are fixed.

First of all, conditions \eqref{est1}--\eqref{est_gamma} and \eqref{comp_beta} ensure that there exist
\begin{gather}
  \label{X_eps}
  X^\eps(\omega)\in L^\infty\left(0,T; H\right)\,,\\
  \label{Y_eps}
  Y^\eps(\omega)\in L^p(Q)^d\,,\\
  \label{eta_eps}
  \eta^\eps(\omega)\in L^q(Q)^d\,,\\ 
  \label{xi_eps}
  \xi^\eps(\omega)\in L^1\left(Q\right)
\end{gather}
and a sequence
$\{\lambda_n\}_{n\in\En}$ (which clearly depends on $\eps$ and $\omega$ as well) such that as $n\rarr\infty$
\begin{gather}
  \label{conv1''}
  X^\eps_{\lambda_n}(\omega)\weakstar X^\eps(\omega) \quad\text{in } L^\infty\left(0,T; H\right)\,,\\
  \label{conv2''}
  J_{\lambda_n}\left(\nabla X^\eps_{\lambda_n}(\omega)\right)\rarrw Y^\eps(\omega) \quad\text{in } L^p(Q)^d\,,\\
  \label{conv3''}
  \gamma_{\lambda_n}(\nabla X^\eps_\lambda(\omega))\rarrw \eta^\eps(\omega) \quad\text{in } L^q(Q)^d\,,\\
  \label{conv4''}
  \beta_{\lambda_n}(X^\eps_{\lambda_n}(\omega))\rarrw\xi^\eps(\omega) \quad\text{in } L^1\left(Q\right)
\end{gather}
and also as $\lambda\searrow0$ that
\begin{gather}
  \label{conv5}
  \lambda\gamma_{\lambda}(\nabla X^\eps_{\lambda}(\omega))\rarr0 \quad\text{in } L^2(Q)^d\,,\\
  \label{conv6}
  \lambda\nabla X^\eps_{\lambda}(\omega)\rarr0 \quad\text{in } L^2(Q)^d\,.
\end{gather}
In particular, since $\lambda^2|\gamma_\lambda(\nabla X^\eps_\lambda)|^2
=|\nabla X^\eps_\lambda-J_\lambda(\nabla X^\eps_\lambda)|^2$,
from \eqref{conv5} we have that
\[
  \int_{Q}{\left|\nabla X^\eps_{\lambda}-J_{\lambda}(\nabla X^\eps_{\lambda})\right|^2}(\omega)\rarr0 
  \quad\text{as }\lambda\searrow0\,,
\]
which together with \eqref{conv2''} implies that $\nabla X^\eps_{\lambda_n}(\omega)\rarrw Y^\eps$ in $L^2(Q)^d$; hence,
we deduce
\beq
  \label{X_eps_bis}
  X^\eps(\omega)\in L^p\left(0,T;V\right)\,,
\eeq
$Y^\eps=\nabla X^\eps$ and as a consequence (possibly renominating $\{\lambda_n\}_{n\in\En}$)
\begin{gather}
  \label{conv7''}
  J_{\lambda_n}\left(\nabla X^\eps_{\lambda_n}(\omega)\right)\rarrw \nabla X^\eps(\omega) \quad\text{in } L^p(Q)^d\,,\\
  \label{conv8''}
  \nabla X^\eps_{\lambda_n}(\omega)\rarrw\nabla X^\eps(\omega) \quad\text{in } L^2(Q)^d\,.
\end{gather}

The second step is to prove a strong convergence for $X^\eps_\lambda$.
To this purpose, equation \eqref{app1} can be rewritten on the path starting from $\omega$ as
\[
  \partial_t\left(X^\eps_\lambda-W^\eps_B\right)(t)-\Div\gamma_\lambda(\nabla X^\eps_\lambda(t))
  -\lambda\Delta X^\eps_\lambda(t)
  +\beta_\lambda(X^\eps_\lambda(t))=0 \quad\text{in } H^{-1}(D)
\]
for every $t\in[0,T]$: we estimate the different terms of the previous relation in the larger space $L^1(0,T;V_0^*)$.
Recalling that $L^1(D), H^{-1}(D), V^*\embed V_0^*$, using
the fact that $\l|-\Div v\r|_{V^*}\leq\l|v\r|_{L^q(D)}$ for every $v\in L^q(D)^d$ (thanks to definition \eqref{div})
and that $\l|-\Delta v\r|_{H^{-1}(D)}\leq\l|\nabla v\r|_{L^2(D)}$ for every $v\in\gH1$, 
using
conditions \eqref{est_grad}--\eqref{est_gamma} and \eqref{comp_beta}, we deduce that for every $\lambda\in(0,1)$
\begin{align*}
  \l|-\Div\gamma_\lambda(\nabla X^\eps_\lambda(\omega))\r|_{L^1(0,T;V_0^*)}\lesssim
  \l|\gamma_\lambda(\nabla X^\eps_\lambda(\omega))\r|_{L^q(Q)}
  \leq M_{\omega,\eps}\,,&\\
  \l|-\lambda\Delta X^\eps_\lambda\r|_{L^1(0,T;V_0^*)}\lesssim\lambda\l|\nabla X_\lambda^\eps\r|_{L^2(Q)}\leq M_{\omega, \eps}\,,&\\
  \l|\beta_\lambda(X^\eps_\lambda(\omega))\r|_{L^1(0,T;V_0^*)}\lesssim \l|\beta_\lambda(X^\eps_\lambda(\omega))\r|_{L^1(Q)}\leq
  M_{\omega.\eps}\,,&
\end{align*}
renominating the constant $M_{\omega,\eps}$ at each passage.
Hence, we deduce by difference that
\beq
  \label{est4}
  \l|\partial_t\left(X^\eps_\lambda-W^\eps_B\right)(\omega)\r|_{L^1(0,T;V_0^*)}\leq M_{\omega, \eps} \quad\text{for every } 
  \lambda\in(0,1)\,.
\eeq
At this point, we can recover a strong convergence using some classical compactness results with $\omega\in\Omega$ being fixed.
The proposition that we are going to use is the following (the reader can refer to 
\cite[Cor.~4, p.~85]{simon}).

\begin{prop}
  \label{simon}
  Let $A_1\cembed A_2\embed A_3$ be three Banach spaces and let $F\subseteq L^r(0,T;A_1)$ be a 
  bounded set such that $\frac{\partial F}{\partial t}:=\{\partial_t f: f\in F\}$ is bounded in $L^1(0,T;A_3)$ for a 
  given $r\geq1$. Then $F$ 
  is relatively compact in $L^r(0,T;A_2)$.
\end{prop}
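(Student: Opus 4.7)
The plan is to apply a vector-valued Fr\'echet-Kolmogorov-Riesz compactness criterion in $L^r(0,T;A_2)$, which reduces to verifying two properties of $F$: (i) boundedness in $L^r(0,T;A_2)$, which is immediate from the continuous embedding $A_1\embed A_2$ implicit in $A_1\cembed A_2$; and (ii) uniform equicontinuity of time translations,
\[
  \sup_{f\in F}\l|f(\cdot+h)-f\r|_{L^r(0,T-h;A_2)}\rarr0 \quad\text{as } h\searrow0\,.
\]
The crucial ingredient for (ii) is the Ehrling-Lions interpolation lemma, a standard consequence of $A_1\cembed A_2\embed A_3$ (obtained by contradiction, using a sequence forced to converge both in $A_2$ and to $0$ in $A_3$): for every $\eta>0$ there exists $C_\eta>0$ such that
\[
  \l|v\r|_{A_2}\leq\eta\l|v\r|_{A_1}+C_\eta\l|v\r|_{A_3} \quad\text{for every } v\in A_1\,.
\]

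Applying this pointwise in $t$ to the increment $f(t+h)-f(t)$ and taking the $L^r(0,T-h)$-norm, the $A_1$-contribution is controlled by $2\eta\sup_{f\in F}\l|f\r|_{L^r(0,T;A_1)}$, which is finite by hypothesis and can be rendered arbitrarily small by choosing $\eta$ small. For the $A_3$-contribution, with $\eta$ (and hence $C_\eta$) fixed, the bound on $\partial_t f$ in $L^1(0,T;A_3)$ yields both the pointwise estimate $\l|f(t+h)-f(t)\r|_{A_3}\leq\l|\partial_t f\r|_{L^1(0,T;A_3)}$ and the integrated estimate $\l|f(\cdot+h)-f\r|_{L^1(0,T-h;A_3)}\leq h\,\l|\partial_t f\r|_{L^1(0,T;A_3)}$; combining these via the elementary interpolation $\l|g\r|_{L^r}^r\leq\l|g\r|_{L^\infty}^{r-1}\l|g\r|_{L^1}$ gives
\[
  \l|f(\cdot+h)-f\r|_{L^r(0,T-h;A_3)}\leq h^{1/r}\l|\partial_t f\r|_{L^1(0,T;A_3)}\,,
\]
which vanishes as $h\searrow0$ uniformly in $f\in F$. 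Calibrating $\eta$ first and then $h$ completes~(ii).

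The main obstacle I anticipate is the correct ordering of the two small-parameter limits ($\eta$ before $h$), together with the verification that the vector-valued Fr\'echet-Kolmogorov criterion applies in the Banach space $A_2$ (which is not assumed separable or reflexive): beyond boundedness and equicontinuity, one must check a pointwise-tightness condition, namely that the sections $\{f(t):f\in F,\,t\in[0,T]\}$ lie in a relatively compact subset of $A_2$ for a.e.~$t$. The latter is obtained by combining a uniform-in-$t$ $A_3$-bound inherited from $W^{1,1}(0,T;A_3)\embed C([0,T];A_3)$ with the Ehrling inequality and the compact embedding $A_1\cembed A_2$, applied to suitable time-averages of $f$ that remain bounded in $A_1$ in virtue of the $L^r(0,T;A_1)$ bound.
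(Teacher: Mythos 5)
The paper offers no proof of this proposition at all: it is quoted directly from Simon's compactness paper \cite[Cor.~4, p.~85]{simon}, so there is nothing internal to compare against. Your main line of attack is in fact the standard one (and essentially Simon's): the Ehrling--Lions inequality $\l|v\r|_{A_2}\leq\eta\l|v\r|_{A_1}+C_\eta\l|v\r|_{A_3}$ combined with the translation estimate $\l|f(\cdot+h)-f\r|_{L^r(0,T-h;A_3)}\leq h^{1/r}\l|\partial_t f\r|_{L^1(0,T;A_3)}$ (your interpolation $\l|g\r|_{L^r}^r\leq\l|g\r|_{L^\infty}^{r-1}\l|g\r|_{L^1}$ is correct), and your ordering of the limits -- fix $\eta$ first, then send $h\searrow0$ -- is the right one. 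That part is sound.

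The genuine gap is in the third condition of the vector-valued Fr\'echet--Kolmogorov criterion and in your proposed verification of it. The correct supplementary condition (Simon's Theorem~1) concerns the interval averages: one needs $\left\{\int_{t_1}^{t_2}f(t)\,dt : f\in F\right\}$ relatively compact in $A_2$ for all $0<t_1<t_2<T$, and this follows in one line from your own time-average remark: such averages are bounded in $A_1$ by H\"older and the $L^r(0,T;A_1)$ bound, hence precompact in $A_2$ by $A_1\cembed A_2$. Instead you posit a.e.-pointwise relative compactness of the sections $\{f(t):f\in F\}$, and the argument you sketch for it does not go through: the $L^r(0,T;A_1)$ bound gives no pointwise-in-$t$ control of $\l|f(t)\r|_{A_1}$, so Ehrling cannot be applied to $f(t)$ or to $f(t)-\frac{1}{\delta}\int_t^{t+\delta}f(s)\,ds$; the uniform bound in $C([0,T];A_3)$ inherited from $W^{1,1}(0,T;A_3)$ lives in the largest space and yields no compactness in $A_2$; and the discrepancy $f(t)-\frac{1}{\delta}\int_t^{t+\delta}f(s)\,ds$ is controlled in $A_3$ only by $\int_t^{t+\delta}\l|\partial_t f(\sigma)\r|_{A_3}\,d\sigma$, which the hypotheses allow to stay of order one uniformly over $F$ (the $L^1$ mass of $\partial_t f$ may concentrate near $t$), so it is not uniformly small. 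Moreover the pointwise-sections condition is not what the hypotheses naturally deliver, nor is it necessary for compactness, so it is the wrong intermediate target. Replace it by the interval-average condition above (equivalently, prove relative compactness of the mollified family $f_\delta$ in $C([0,T-\delta];A_2)$ via Ascoli--Arzel\`a, using boundedness of the averages in $A_1$ for pointwise precompactness and Ehrling plus the $A_3$-translation estimate for equicontinuity), and your proof closes along Simon's lines.
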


In our setting, we make the natural choices $A_1=\gH1$,
$A_2=H$, $A_3=V_0^*$, $r=~2$ and $F=\{(X^\eps_{\lambda_n}-W^\eps_B)(\omega)\}_{n\in\En}$:
since by \eqref{conv8''} the family $\{X^\eps_{\lambda_n}(\omega)\}_{n\in\En}$ is bounded in $L^2(0,T;\gH1)$,
thanks also to \eqref{est4} we can apply
Proposition \ref{simon} to recover that the set $F$
is relatively compact in $L^2(0,T;H)$.
Hence, there exists $X^\eps_B(\omega)\in L^2(0,T;H)$
such that
\[
  (X^\eps_{\lambda_n}-W^\eps_B)(\omega)\rarr X^\eps_B(\omega) \quad\text{in } L^2(0,T;H)
   \quad\text{as } n\rarr\infty\,,
\]
possibly updating the sequence $\{\lambda_n\}_{n\in\En}$.
Using condition \eqref{conv1''} and the fact that $W^\eps_B$ is fixed with respect to $\lambda$,
we infer that
\[
  (X^\eps_{\lambda_n}-W^\eps_B)(\omega)\weakstar (X^\eps-W^\eps_B)(\omega) \quad\text{in } L^\infty(0,T;H)
  \quad\text{as } n\rarr\infty\,,
\]
and for uniqueness of the weak limit we have $X^\eps_B(\omega)=(X^\eps-W^\eps_B)(\omega)$ a.e.~in $Q$.
As a consequence, we have that
\beq
  \label{conv_fort}
  X^\eps_{\lambda_n}(\omega)\rarr X^\eps(\omega) \quad\text{in } L^2(0,T;H) \quad\text{as } n\rarr\infty\,.
\eeq

We are now ready to pass to the limit as $\lambda\searrow0$ in \eqref{app1}: 
in particular, we are going to show that for every $\eps\in(0,1)$
we have
\begin{gather}
  \label{app_eps}
  \begin{split}
    X^\eps(t)-\int_0^t{\Div \eta^\eps(s)\,ds}
    +\int_0^t{\xi^\eps(s)\,ds}&=X_0+\int_0^t{B^\eps(s)\,dW_s} \quad\text{in } V_0^*\,,\\
    &\text{for every } t\in[0,T]\,, \quad\Pi\text{-almost surely}\,,
  \end{split}\\
  \label{incl_eta_eps}
  \eta^\eps\in\gamma(\nabla X^\eps) \quad\text{a.e.~in } Q\,, \quad\Pi\text{-almost surely}\,,\\
  \label{incl_xi_eps}
  \xi^\eps\in\beta(X) \quad\text{a.e.~in } Q\,, \quad\Pi\text{-almost surely}\,,\\
  \label{j_l1}
  j(X^\eps)+j^*(\xi^\eps)\in L^1(Q)\,, \quad\Pi\text{-almost surely}\,.
\end{gather}

Firstly, let $\eps\in(0,1)$ and $\omega\in\Omega$ be fixed as usual.
Let $w\in V_0$ and recall the fact that $V_0\embed L^\infty(D)\cap V$:
then, thanks to \eqref{conv1''}, \eqref{conv3''}, \eqref{conv6} and \eqref{conv4''}, 
for almost every $t\in(0,T)$ we have
\begin{gather*}
  \int_D{X^\eps_{\lambda_n}(t)w}\rarr\int_D{X^\eps(t)w}\,,\\
  \int_0^t\int_D{\gamma_{\lambda_n}(\nabla X^\eps_{\lambda_n}(s))\cdot\nabla w\,ds}
  \rarr \int_0^t\int_D{\eta^\eps(s)\cdot\nabla w\,ds}\,,\\
  \lambda_n\int_0^t\int_D{\nabla X^\eps_{\lambda_n}(s)\cdot\nabla w\,ds}\rarr0\,,\\
  \int_0^t\int_D{\beta_{\lambda_n}(X^\eps_{\lambda_n}(s))w\,ds}\rarr\int_0^t\int_D{\xi^\eps(s)w\,ds}\,,
\end{gather*}
as $n\rarr\infty$. Hence,
taking these remarks into account,
letting $n\rarr\infty$ in equation \eqref{app1} evaluated with $\lambda_n$, we obtain exactly 
\[
  \begin{split}
    X^\eps(t)-\int_0^t{\Div\eta^\eps(s)\,ds}&+\int_0^t{\xi^\eps(s)\,ds}=X_0+\int_0^t{B^\eps(s)\,dW_s} \qquad\text{in } V_0^*\\
    &\text{for almost every } t\in(0,T)\,, \quad\Pi\text{-almost surely}\,.
  \end{split}
\]
Since all the terms except the first are continuous with respect to time, we deduce a posteriori that
$X^\eps(\omega)\in C^0\left([0,T]; V_0^*\right)$ $\Pi$-almost surely,
which together with the fact that $X^\eps(\omega)\in L^\infty(0,T;H)$ and the classical result on weak continuity
contained in \cite[Thm.~2.1]{strauss} implies
\beq
  \label{cont_X_eps'}
  X^\eps\in C^0_w\left([0,T]; H\right) \quad\Pi\text{-almost surely}\,.
\eeq 
Hence, the last integral relation holds for every $t\in[0,T]$ and \eqref{app_eps} is proved.

Secondly, let us show \eqref{incl_xi_eps}.
By \eqref{conv_fort} we can assume that
$X^\eps_{\lambda_n}(\omega)\rarr X^\eps(\omega)$ almost everywhere in $Q$ as $k\rarr\infty$,
from which, since $R_{\lambda_n}$ is a contraction, 
we deduce also that 
$R_{\lambda_n}X^\eps_{\lambda_n}(\omega)\rarr X^\eps(\omega)$ almost everywhere in $Q$.
Moreover, by \eqref{prop_res2} and \eqref{conv4''}, we also know that
$\beta_{\lambda_n}(X^\eps_{\lambda_n}(\omega))\in
\beta(R_{\lambda_n}X^\eps_{\lambda_n}(\omega))$
and $\beta_{\lambda_n}(X_{\lambda_n}(\omega))\rarrw\xi^\eps(\omega)$ in $L^1(Q)$.
Consequently, since
$\{\beta_{\lambda_n}(X_{\lambda_n}(\omega))X_{\lambda_n}(\omega)\}_{n\in\En}$ is
bounded in $L^1(Q)$ thanks to \eqref{est_j_star}, we can apply the result contained in \cite[Thm.~18, p.~126]{brezis3}
to infer \eqref{incl_xi_eps}.

Furthermore, by definition of $\beta_{\lambda_n}$ we have
$X^\eps-R_{\lambda_n}X^\eps_{\lambda_n}=(X^\eps-X^\eps_{\lambda_n})+\lambda_n\beta_{\lambda_n}(X^\eps_{\lambda_n})$,
so that thanks to \eqref{conv4''} and \eqref{conv_fort} we deduce
that 
$R_{\lambda_n}X^\eps_{\lambda_n}(\omega)\rarr X^\eps(\omega)$ in $L^1(Q)$:
hence, by the weak lower semicontinuity of the convex integrals and conditions \eqref{conv4''},
\eqref{prop_res2}, \eqref{convex2} and \eqref{est_j_star}, we have that
\[
  \begin{split}
  \int_Q&{\left[j(X^\eps(\omega))+j^*(\xi^\eps(\omega))\right]}\leq\liminf_{n\rarr\infty}
  \int_{Q}\left[j(R_{\lambda_n}X^\eps_{\lambda_n}(\omega))+j^*(\beta_{\lambda_n}(X_{\lambda_n})(\omega))\right]\\
  &=\liminf_{n\rarr\infty}\int_{Q}R_{\lambda_n}X^\eps_{\lambda_n}(\omega)\beta_{\lambda_n}(X_{\lambda_n}(\omega))\leq
  \liminf_{n\rarr\infty}\int_{Q}X^\eps_{\lambda_n}(\omega)\beta_{\lambda_n}(X_{\lambda_n}(\omega))
  \leq M_{\omega, \eps}\,,
  \end{split}
\]
so that also \eqref{j_l1} is proved. Let us also point out that 
conditions \eqref{incl_xi_eps} and \eqref{convex2} imply $\xi^\eps X^\eps=j(X^\eps)+j^*(\xi^\eps)$ almost everywhere
on $Q$, so that from the very last calculations, using the fact that $R_{\lambda_n}$ is a contraction
and the monotonicity of $\beta_\lambda$, we have
\beq
  \label{liminf_xi_eps}
  \xi^\eps(\omega) X^\eps(\omega)\in L^1(Q)\,, \quad
  \int_Q{\xi^\eps(\omega) X^\eps(\omega)}\leq\liminf_{n\rarr\infty}
  \int_Q{\beta_{\lambda_n}(X^\eps_{\lambda_n}(\omega))X^\eps_{\lambda_n}(\omega)}\,.
\eeq

Finally, let us show that \eqref{incl_eta_eps} holds: in the next passages,
we will omit to write $\omega$ to simplify notations.
From equation \eqref{test_lam_eps} evaluated at time $T$,
recalling conditions \eqref{conv1''}, \eqref{conv3''}, \eqref{conv4''}, \eqref{conv6},
\eqref{liminf_xi_eps} and \eqref{stoc_reg}, we get that
\[
   \begin{split}
   &\limsup_{n\rarr\infty}\int_Q{\gamma_{\lambda_n}(\nabla X^\eps_{\lambda_n})\cdot\nabla X^\eps_{\lambda_n}}
  =\frac{1}{2}\l|X_0\r|^2_H+\lim_{n\rarr\infty}\int_Q{\gamma_{\lambda_n}(\nabla X^\eps_{\lambda_n})\cdot \nabla W^\eps_B}\\
  &\qquad\quad+\lim_{n\rarr\infty}\lambda_n\int_Q{\nabla X^\eps_{\lambda_n}\cdot\nabla W^\eps_B}
  +\lim_{n\rarr\infty}\int_Q{\beta_{\lambda_n}(X^\eps_{\lambda_n})W^\eps_B}\\
  &\qquad\quad-\frac{1}{2}\liminf_{n\rarr\infty}\l|X_{\lambda_n}^\eps(T)-W^\eps_B(T)\r|^2_H
  -\lim_{n\rarr\infty}\lambda_n\l|\nabla X^\eps_{\lambda_n}\r|^2_H
  -\liminf_{n\rarr\infty}\int_Q{\beta_{\lambda_n}(X^\eps_{\lambda_n})X^\eps_{\lambda_n}}\\
  &\leq\frac{1}{2}\l|X_0\r|^2_H+\int_Q{\eta^\eps\cdot\nabla W^\eps_B}
  +\int_Q{\xi^\eps W^\eps_B}-\frac{1}{2}\l|X^\eps(T)-W^\eps_B(T)\r|^2_H
  -\int_Q{\xi^\eps X^\eps}\,.
  \end{split}
\]
At this point, thanks to conditions \eqref{app_eps}--\eqref{j_l1}, we can prove
that the following testing formula holds:
\beq
  \label{test_eps}
  \frac{1}{2}\l|X^\eps(T)-W^\eps_B(T)\r|^2_H+
  \int_Q{\eta^\eps\cdot\nabla(X^\eps-W^\eps_B)}
  +\int_Q{\xi^\eps(X^\eps-W^\eps_B)}=\frac{1}{2}\l|X_0\r|^2_H\,.
\eeq 
\begin{rmk}
The proof of \eqref{test_eps} relies on sharp approximations of
elliptic type and is very technical: hence, we omit it here in order not to make
the treatment heavier. The reader can refer to Appendix \ref{A_app} for a complete and rigorous proof of \eqref{test_eps}.
\end{rmk}
Hence, thanks to \eqref{test_eps}, the last set of inequalities can be read as
\[
  \limsup_{n\rarr\infty}\int_Q{\gamma_{\lambda_n}(\nabla X^\eps_{\lambda_n})\cdot\nabla X^\eps_{\lambda_n}}
    \leq\int_Q{\eta^\eps\cdot\nabla X^\eps}\,,
\]
from which, using the definition of $\gamma_{\lambda_n}$ and condition \eqref{conv5} we deduce that
\[
  \begin{split}
  \limsup_{n\rarr\infty}&\int_Q{\gamma_{\lambda_n}(\nabla X^\eps_{\lambda_n})\cdot J_{\lambda_n}\left(\nabla X^\eps_{\lambda_n}\right)}
  =\limsup_{n\rarr\infty}\int_Q{\left[\gamma_{\lambda_n}(\nabla X^\eps_{\lambda_n})\cdot\nabla X^\eps_{\lambda_n}-
  \lambda_n|\gamma_{\lambda_n}(\nabla X^\eps_{\lambda_n})|^2\right]}\\
  &=\limsup_{n\rarr\infty}\int_Q\gamma_{\lambda_n}(\nabla X^\eps_{\lambda_n})\cdot\nabla X^\eps_{\lambda_n}-
  \lim_{n\rarr\infty}\lambda_n\int_Q{\left|\gamma_{\lambda_n}(\nabla X^\eps_{\lambda_n})\right|^2}\leq
  \int_Q{\eta^\eps\cdot\nabla X^\eps}\,.
  \end{split}
\]
This last inequality together with \eqref{conv2''} and \eqref{conv3''} implies condition \eqref{incl_eta_eps}
thanks to the usual tools of monotone analysis.

\subsection{Measurability properties of the solutions}
\label{omega_reg}
In this section, we show that the solution components $X^\eps$, $\eta^\eps$ and $\xi^\eps$
constructed in the previous section have also some regularity with respect to $\omega$.
Moreover,
we prove uniform estimates with respect to $\eps$:
to this purpose, we will use the results of
Sections \ref{second} and \ref{third}, as well as natural lower semicontinuity properties.

First of all, note that, a priori, $X^\eps$, $\eta^\eps$ and $\xi^\eps$ are not even
measurable processes, because of the way they have been build (the sequence $\lambda_n$
could depend on $\omega$ as well). To show measurability,
we need to prove uniqueness for problem \eqref{app_eps}--\eqref{j_l1}.
Hence, let  $(X^\eps_1, \eta^\eps_1, \xi^\eps_1)$ and $(X^\eps_2, \eta^\eps_2, \xi^\eps_2)$ satisfy 
conditions \eqref{app_eps}--\eqref{j_l1}: taking the difference of \eqref{app_eps} and setting
$Y^\eps:=X^\eps_1-X^\eps_2$, $\zeta^\eps:=\eta^\eps_1-\eta^\eps_2$ and $\psi^\eps:=\xi^\eps_1-\xi^\eps_2$
we have
\[
  \begin{split}
    Y^\eps(t)-\int_0^t{\Div \zeta^\eps(s)\,ds}
    +\int_0^t{\psi^\eps(s)\,ds}=0
    \quad\text{for every } t\in[0,T]\,, \quad\Pi\text{-almost surely}\,.
  \end{split}
\]
Now, by convexity we have
$j(Y^\eps/2)+j^*(\psi^\eps/2)\leq \frac{1}{2}\left(j(X^\eps_1)+j(X^\eps_2)+j^*(\xi^\eps_1)+j^*(\xi^\eps_2)\right)$,
where the right-hand side is in $L^1(Q)$:
hence, using the same argument as in Appendix \ref{A_app} with $X_0=0$ and $B=0$, we infer that
\[
  \frac12\l|Y^\eps(t)\r|^2_H+\int_0^t\int_D\zeta^\eps(s)\cdot\nabla Y^\eps(s)\,ds+
  \int_0^t\int_D\psi^\eps(s)Y^\eps(s)\,ds=0\,.
\]
The monotonicity of $\gamma$ and $\beta$ implies that $Y^\eps=0$. Moreover, in view of \eqref{ip3},
\luca{$\gamma$ is a continuous function.} This implies that $\zeta^\eps=0$ and the first integral
expression becomes $\int_0^t\psi^\eps(s)\,ds=0$ for every $t\in[0,T]$, so that also $\psi^\eps=0$
and uniqueness is proved. 

At this point, we are ready to prove that the sequence $\{\lambda_n\}_{n\in\En}$ constructed 
in the previous section can be chosen independent of $\omega$: more precisely, we can prove that
for any sequence $\{\lambda_n\}_{n\in\En}$ decreasing to $0$, conditions \eqref{conv1''}--\eqref{conv4''}
and \eqref{conv7''}--\eqref{conv8''} hold.
Indeed, let $\{\lambda_n\}_{n\in\En}$ be any sequence decreasing to $0$ and fix $\omega\in\Omega$:
then, for every subsequence of $\{\lambda_n\}_{n\in\En}$ (which we still denote with the same symbol
\luca{for sake of simplicity}),
the estimates \eqref{est1}--\eqref{est_gamma} hold. Proceeding as in Section \ref{first_lim} and invoking the uniqueness,
we can then
extract a further sub-subsequence (depending on $\omega$) along which the same weak convergences
to $X^\eps$, $\eta^\eps$ and $\xi^\eps$ hold. This implies that the convergences \eqref{conv1''}--\eqref{conv4''}
and \eqref{conv7''}--\eqref{conv8''} are true
for the original sequence $\{\lambda_n\}_{n\in\En}$, which does not depend on $\omega$.

\luca{Now, let us prove some measurability properties of the processes $X^\eps$, $\eta^\eps$ and $\xi^\eps$.} 
\luca{First of all, since $X^\eps_{\lambda_n}\rarr X^\eps$ in $L^2(0,T;H)$ $\Pi$-almost surely, it is clear 
that $X^\eps$ is predictable (since so are $X^\eps_{\lambda_n}$ for every $n\in\En$).}
\luca{Secondly, let us focus on $\xi^\eps$: we prove that $\beta_{\lambda_n}(X^\eps_{\lambda_n})\rarrw\xi^\eps$
in $L^1(\Omega\times(0,T)\times D)$. To this aim, for any $g\in L^\infty(Q)$, setting
\[ 
  F_{\lambda_n}^\eps:=\int_Q\beta_{\lambda_n}(X^\eps_{\lambda_n})g\,, \qquad
  F^\eps:=\int_Q\xi^\eps g\,,
\]
we know that $F^\eps_{\lambda_n}\rarr F^\eps$ $\Pi$-almost surely:
let us show that $F^\eps_{\lambda_n}\rarrw F^\eps$ in $L^1(\Omega)$. Indeed,
for any $h\in L^\infty(\Omega)$, if we define
\[
  j^*_0(\cdot):=j^*\left(\cdot/M\right)\,, \quad M:=\frac1{(1\vee\l|g\r|_{L^\infty(Q)})(1\vee\l|h\r|_{L^\infty(\Omega)})}\,,
\]
by the Jensen inequality we have that
\[
  \begin{split}
  \exval\left[j^*_0(F_{\lambda_n}^\eps h)\right]&=
  \exval\left[j^*_0\left(\int_Q\beta_{\lambda_n}(X_{\lambda_n}^\eps)gh\right)\right]\\
  &\lesssim_{T,|D|}\exval\int_Qj^*_0(\beta_{\lambda_n}(X_{\lambda_n}^\eps)gh)\leq
  \int_{\Omega\times Q}j^*(|\beta_{\lambda_n}(X_{\lambda_n}^\eps)|)\,,
  \end{split}
\]
where the last term is bounded uniformly in $n$ by \eqref{est_j_star'}.
Consequently, since $j^*_0$ is still superlinear at infinity, by the de la Vallée-Poussin criterion,
we deduce that $\{F^\eps_{\lambda_n}h\}_{n\in\En}$ is uniformly integrable on $\Omega$:
taking also into account that $F^\eps_{\lambda_n}h\rarr F^\eps h$ $\Pi$-almost surely,
Vitali's convergence theorem ensures that $F_{\lambda_n}^\eps h\rarr F^\eps h$ in $L^1(\Omega)$.
Since this is true for any $h$ and $g$, this implies that $\beta_{\lambda_n}(X^\eps_{\lambda_n})\rarrw\xi^\eps$
in $L^1(\Omega\times(0,T)\times D)$.
By Mazur's Lemma
there is a sequence made up of convex combinations of
$\beta_{\lambda_n}(X_{\lambda_n}^\eps)$
which converge strongly $\xi^\eps$ in $L^1(Q)$, $\Pi$-almost surely.
This ensures that $\xi^\eps$ is predictable 
(since so are $\beta_\lambda(X^\eps_\lambda)$ for every $n$).
Finally, using a similar argument, one can show also that $\eta^\eps$ is adapted.}

It is now time to prove some uniform estimates with respect to $\eps$.
By \eqref{conv1''}--\eqref{conv4''}, \eqref{conv7''} and the estimates \eqref{est1'}--\eqref{est_gamma'}, using
the lower semicontinuity of the norm, we have
\begin{gather*}
  \l|X^\eps(\omega)\r|_{L^\infty(0,T; H)}
  \leq\liminf_{n\rarr\infty}\l|X^\eps_{\lambda_n}(\omega)\r|_{L^\infty(0,T;H)}\,,\\
  \l|\nabla X^\eps(\omega)\r|_{L^p(Q)}\leq
  \liminf_{n\rarr\infty}\l|J_{\lambda_n}\left(\nabla X^\eps_{\lambda_n}(\omega)\right)\r|_{L^p(Q)}\,,\\
  \l|\eta^\eps(\omega)\r|_{L^q(Q)}\leq
  \liminf_{n\rarr\infty}\l|\gamma_{\lambda_n}\left(\nabla X^\eps_{\lambda_n}(\omega)\right)\r|_{L^q(Q)}\,,\\
  \l|\xi^\eps(\omega)\r|_{L^1(Q)}\leq
  \liminf_{n\rarr\infty}\l|\beta_{\lambda_n}(X^\eps_{\lambda_n}(\omega))\r|_{L^1(Q)}\,.
\end{gather*}
Taking expectations and using \eqref{est1'}--\eqref{est_gamma'} and \eqref{comp_beta'},
the Fatou's lemma implies
\begin{gather*}
  \exval\l|X^\eps\r|^2_{L^\infty(0,T; H)}\leq\exval\left[\left(\liminf_{n\rarr\infty}\l|X^\eps_{\lambda_n}\r|_{L^\infty(0,T; H)}\right)^2\right]
  \leq\liminf_{n\rarr\infty}\l|X^\eps_{\lambda_n}\r|^2_{L^2(\Omega; L^\infty(0,T; H))} \leq N\,,\\
  \exval\l|\nabla X^\eps\r|_{L^p(Q)}^p\leq
  \exval\left[\left(\liminf_{n\rarr\infty}\l|J_{\lambda_n}\left(\nabla X^\eps_{\lambda_n}\right)\r|_{L^p(Q)}\right)^p\right]\leq
  \liminf_{n\rarr\infty}\l|J_{\lambda_n}\left(\nabla X^\eps_{\lambda_n}\right)\r|_{L^p(\Omega\times Q)}^p\leq N\,,\\
  \exval\l|\eta^\eps\r|_{L^q(Q)}^q\leq
  \exval\left[\left(\liminf_{n\rarr\infty}\l|\gamma_{\lambda_n}\left(\nabla X^\eps_{\lambda_n}\right)\r|_{L^q(Q)}\right)^q\right]\leq
  \liminf_{n\rarr\infty}\l|\gamma_{\lambda_n}\left(\nabla X^\eps_{\lambda_n}\right)\r|_{L^q(\Omega\times Q)}^q\leq N\,,\\
  \exval\l|\xi^\eps\r|_{L^1(Q)}\leq
  \exval\left[\liminf_{n\rarr\infty}\l|\beta_{\lambda_n}(X^\eps_{\lambda_n})\r|_{L^1(Q)}\right]\leq
  \liminf_{n\rarr\infty}\l|\beta_{\lambda_n}(X^\eps_{\lambda_n})\r|_{L^1(\Omega\times Q)}\leq N\,,
\end{gather*}
for a certain positive constant $N$ independent of $\eps$.
Hence, we have also proved that
\begin{gather}
  \label{X_eps_reg}
  X^\eps\in L^2\left(\Omega; L^\infty(0,T; H)\right)\cap L^p\left(\Omega\times(0,T); V\right)\,,\\
  \label{eta_xi_eps_reg}
  \eta^\eps\in L^q\left(\Omega\times(0,T)\times D\right)^d\,, \quad \xi^\eps\in L^1\left(\Omega\times(0,T)\times D\right)
\end{gather}
and that the following estimates hold:
\begin{gather}
  \label{est_eps1}
  \l|X^\eps\r|_{L^2(\Omega; L^\infty(0,T; H))\cap L^p(\Omega\times(0,T); V)}\leq N \quad\text{for every } \eps\in(0,1)\,,\\
  \label{est_eps2}
  \l|\eta^\eps\r|_{L^q(\Omega\times(0,T)\times D)}\leq N\quad\text{for every } \eps\in(0,1)\,,\\
  \label{est_eps3}
  \l|\xi^\eps\r|_{L^1(\Omega\times(0,T)\times D)}\leq N\quad\text{for every } \eps\in(0,1)\,.
\end{gather}

Moreover, since $\beta_{\lambda_n}(X^\eps_{\lambda_n})\rarrw\xi^\eps$ in $L^1(Q)$ as $n\rarr\infty$, $\Pi$-almost surely, 
by the weak lower semicontinuity of
the convex integral we have
\[
  \int_Q{j^*(\xi^\eps)}\leq\liminf_{n\rarr\infty}
  \int_Q{j^*\left(\beta_{\lambda_n}(X^\eps_{\lambda_n})\right)} \quad\Pi\text{-almost surely}\,:
\]
hence, thanks to the Fatou lemma and condition \eqref{est_j_star'}, we deduce that
\[
  \int_{\Omega\times Q}{j^*(\xi^\eps)}\leq
  \liminf_{n\rarr\infty}\int_{\Omega\times Q}{j^*\left(\beta_{\lambda_n}(X^\eps_{\lambda_n})\right)}\leq N\,,
\]
where $N$ is independent of $\eps$. Consequently, since $j^*$ is even thanks to \eqref{j_even},
we have that $\{j^*(\xi^\eps)\}_{\eps\in(0,1)}$ is bounded in $L^1(\Omega\times Q)$: hence,
since $j^*$ is superlinear at $\infty$, the classical results by De là Vallée-Poussin and the Dunford.Pettis theorem ensure that
\beq
  \label{comp_eps}
  \{\xi^\eps\}_{\eps\in(0,1)} \quad\text{is weakly relatively compact in } L^1(\Omega\times(0,T)\times D)\,.
\eeq
Similarly, $R_{\lambda_n}X^\eps_{\lambda_n}\rarr X^\eps$ in $L^1(Q)$
and $j(R_\lambda X^\eps_\lambda)\leq j(R_\lambda X^\eps_\lambda)
+j^*(\beta_\lambda(X^\eps_\lambda))=\beta_\lambda(X^\eps_\lambda)X^\eps_\lambda$: hence,
the weak lower semicontinuity of the convex integrals, Fatou's lemma and condition \eqref{est_j_star'} imply
\[
\int_{\Omega\times Q}{j(X^\eps)}\leq
\liminf_{n\rarr\infty}\int_{\Omega\times Q}{j\left(R_{\lambda_n}X^\eps_{\lambda_n}\right)}\leq
\sup_{\eps, \lambda\in(0,1)}\l|\beta_\lambda(X^\eps_\lambda)X^\eps_\lambda\r|_{L^1(\Omega\times Q)}\leq N\,.
\]
Taking these remarks into account, we have also obtained that
\beq
  \label{j_l1'}
  \l|j(X^\eps)\r|_{L^1(\Omega\times(0,T)\times D)}+\l|j^*(\xi^\eps)\r|_{L^1(\Omega\times(0,T)\times D)}\leq N \quad\text{for every }
  \eps\in(0,1)\,.
\eeq

\subsection{Passage to the limit as $\eps\searrow0$}
\label{second_lim}

In this section, we pass to the limit as $\eps\searrow0$ in the sub-prolem \eqref{app_eps}--\eqref{j_l1} and we recover 
global solutions to the original problem: to this end, the passage to the limit takes place also in probability, as we have
already anticipated.

First of all, thanks to \eqref{est_eps1}--\eqref{est_eps3}, we deduce that there exist
\begin{gather}
  \label{X_lim}
  X\in L^\infty\left(0,T;L^2(\Omega; H)\right)\cap L^p\left(\Omega\times(0,T);V\right)\,,\\
  \label{xi_eta_lim}
  \eta\in L^q\left(\Omega\times(0,T)\times D\right)^d\,, \qquad
  \xi\in L^1\left(\Omega\times(0,T)\times D\right)\,,
\end{gather}
and a sequence $\{\eps_n\}_{n\in\En}$ with $\eps_n\searrow0$ as $n\rarr\infty$ such that
\begin{gather}
  \label{conv1'}
  X^{\eps_n}\weakstar X \quad\text{in }L^\infty\left(0,T;L^2(\Omega; H)\right)\,,\\
  \label{conv2'}
  X^{\eps_n}\rarrw X \quad\text{in }L^p\left(\Omega\times(0,T); V)\right)\,,\\
  \label{conv3'}
  \eta^{\eps_n}\rarrw\eta \quad\text{in } L^q\left(\Omega\times(0,T)\times D\right)^d\,,\\
  \label{conv4'}
  \xi^{\eps_n}\rarrw\xi \quad\text{in } L^1\left(\Omega\times(0,T)\times D\right)\,.
\end{gather}

Let us prove a strong convergence for $X^\eps$: given $\eps,\delta\in(0,1)$,
consider equation \eqref{app_eps} evaluated for $\eps$ and $\delta$. Then, taking the difference we have
\[
  \begin{split}
    X^\eps(t)-X^\delta(t)&-
    \int_0^t{\Div(\eta^\eps(s)-\eta^\delta(s))\,ds}
    +\int_0^t{\left(\xi^\eps(s)-\xi^\delta(s)\right)\,ds}\\
    &=\int_0^t{(B^\eps(s)-B^\delta(s))\,dW_s}
    \quad\text{in } V_0^*
    \quad\text{for every } t\in[0,T]\,, \quad\Pi\text{-a.s}\,.
    \end{split}
\]
Now, notice that thanks to \eqref{j_even} and the convexity of $j$ and $j^*$, we have
\[
  j\left(\frac{X^\eps-X^\delta}{2}\right)+j^*\left(\frac{\xi^\eps-\xi^\delta}{2}\right)\leq
  \frac{1}{2}\left(j(X^\eps)+j(X^\delta)+j^*(\xi^\eps)+j^*(\xi^\delta)\right)\,,
\]
where the term on the right hand side is in $L^1(\Omega\times(0,T)\times D)$ thanks to \eqref{j_l1'}:
hence, recalling also condition \eqref{cont_X_eps'} we can apply Proposition \ref{prop_ito}
with the choices $Y=X^\eps-X^\delta$, $f=\eta^\eps-\eta^\delta$, $g=\xi^\eps-\xi^\delta$, $T=B^\eps-B^\delta$ and 
$\alpha=1/2$
to infer that
\[
  \begin{split}
  &\frac{1}{2}\l|X^\eps(t)-X^\delta(t)\r|^2_H
  +\int_0^t\int_{D}
  {\left(\eta^\eps(s)-\eta^\delta(s)\right)\cdot\left(\nabla X^\eps(s)-\nabla X^\delta(s)\right)\,ds}\\
  &\qquad\qquad\qquad+\int_0^t\int_{D}{\left(\xi^\eps(s)-\xi^\delta(s)\right)
  \left(X^\eps(s)-X^\delta(s)\right)\,ds}\\
  &=\frac{1}{2}\int_0^t{\l|B^\eps(s)-B^\delta(s)\r|_{\cL_2(U,H)}^2\,ds}+
  \int_0^t\left((X^\eps-X^\delta)(s), (B^\eps-B^\delta)(s)\,dW_s\right)
  \end{split}
\]
for every $t\in[0,T]$, $\Pi$-almost surely. Now, proceeding exactly as in Section \ref{second},
we take the supremum in $t$ and expecations,
use the monotonicity of $\gamma$ and $\beta$ together with \eqref{incl_eta_eps}--\eqref{incl_xi_eps}
and the Davis inequality, so that we have
\[
  \l|X^\eps-X^\delta\r|^2_{L^2(\Omega; L^\infty(0,T; H))}\lesssim\l|B^\eps-B^\delta\r|^2_{L^2(\Omega\times(0,T); \cL_2(U,H))}
\]
for every $\eps,\delta\in(0,1)$: taking into account \eqref{B_eps1}, this implies that the sequence
$\{X^\eps\}_{\eps\in(0,1)}$ is Cauchy in $L^2(\Omega; L^\infty(0,T;H))$, so that by \eqref{conv1'} we deduce
\beq
  \label{X_reg}
  X\in L^2\left(\Omega; L^\infty(0,T; H)\right)
\eeq
and
\beq
  \label{conv_fort_eps}
  X^\eps\rarr X \quad\text{in } 
  L^2\left(\Omega;L^\infty(0,T; H)\right)\,, \quad\text{as } \eps\searrow0\,.
\eeq

We are now ready to pass to the limit in equation \eqref{app_eps}: to this purpose,
fix $w\in V_0$
(recall that $V_0\embed L^\infty(D)\cap V$).
Then, thanks to \eqref{conv_fort_eps}, \eqref{conv2'}--\eqref{conv4'}
and \eqref{B_eps1},
for every $t\in[0,T]$ we have as $n\rarr\infty$ that
\begin{gather*}
  \exval\left[\operatorname{ess}\sup_{t\in(0,T)}\left|\int_D{X^{\eps_n}(t)w}-\int_D{X(t)w}\right|\right]\rarr0\,,\\
  \exval\left[\int_0^t\int_D\eta^{\eps_n}\cdot \nabla w\,ds\right]
  \rarr\exval\left[\int_0^t\int_D{\eta\cdot\nabla w\,ds}\right]\,,\\
  \exval\left[\int_0^t\int_D{\xi^{\eps_n}(s)w\,ds}\right]\rarr\exval\left[\int_0^t\int_D{\xi(s)w\,ds}\right]\,,\\
  \exval\left[\int_0^t{\left(w, B^{\eps_n}(s)\,dW_s\right)}\right]\rarr\exval\left[\int_0^t{\left(w, B(s)\,dW_s\right)}\right]\,,
\end{gather*}
so that evaluating \eqref{app_eps} with $\eps_n$
and letting $n\rarr\infty$, we deduce
\[
  \begin{split}
    X(t)-\int_0^t{\Div\eta(s)\,ds}+\int_0^t{\xi(s)\,ds}=&X_0+\int_0^t{B(s)\,dW_s} \qquad\text{in } V_0^*\,,\\
    &\text{for almost every } t\in(0,T)\,, \quad\Pi\text{-almost surely}\,.
  \end{split}
\]
Since all the terms except the first have $\Pi$-almost surely continuous paths in $V_0^*$, we have {\em a posteriori} that
$X\in C^0\left([0,T]; V_0^*\right)$ $\Pi$-almost surely,
which together with \eqref{X_reg} and the result contained in
\cite[Thm.~2.1]{strauss} implies
\beq
  \label{cont_X'}
  X\in C^0_w\left([0,T]; H\right) \quad\Pi\text{-almost surely}\,,
\eeq
so that the integral relation holds for every $t\in[0,T]$
and \eqref{sol1}--\eqref{var_form} are proved.
Furthermore, for every $t\in[0,T]$ and $\Pi$-almost surely,
all the terms in \eqref{var_form} except $\int_0^t\eta(s)\,ds$ are in $L^1(D)$
and all the terms except $\int_0^t\xi(s)\,ds$ are in $V^*$, so that by difference
the integral relation holds in $L^1(D)\cap V^*$.

At this point, let us focus on \eqref{incl_xi} and \eqref{j_integr}.
By \eqref{conv_fort_eps}, we may assume that $X^{\eps_n}\rarr X$ almost everywhere in $\Omega\times Q$;
moreover, by \eqref{incl_xi_eps}, \eqref{j_l1'} and \eqref{prop_res2} we have 
\[
\int_{\Omega\times Q}{\xi^\eps X^\eps}=\int_{\Omega\times Q}\left(j(X^\eps)+j^*(\xi^\eps)\right)\leq N\,,
\]
where $N>0$ is independent of $\eps$. Hence, $\{\xi^\eps X^\eps\}_{\eps\in(0,1)}$ is bounded in $L^1(\Omega\times Q)$,
and recalling also \eqref{conv4'} we can apply the result contained in \cite[Thm.~18, p.~126]{brezis3}
to infer that \eqref{incl_xi} holds. Moreover, thanks to
conditions \eqref{conv_fort_eps}, \eqref{conv4'} and \eqref{j_l1'}, using the weak lower
semicontinuity of the convex integrals we have that
\[
  \int_{\Omega\times Q}{\left(j(X)+j^*(\xi)\right)}\leq
  \liminf_{n\rarr\infty}\int_{\Omega\times Q}{\left(j(X^{\eps_n})+j^*(\xi^{\eps_n})\right)}\leq N\,,
\]
so that \eqref{j_integr} is proved. Let us also point out that from the last inequality,
thanks to \eqref{incl_xi}, \eqref{incl_xi_eps} and \eqref{convex2} we obtain
\beq
  \label{lim_inf}
  \int_{\Omega\times Q}{\xi X}\leq\liminf_{n\rarr\infty}\int_{\Omega\times Q}{\xi^{\eps_n}X^{\eps_n}}\,.
\eeq

The next thing that we need to prove is condition \eqref{incl_eta}. To this end, thanks to
\eqref{X_eps}--\eqref{xi_eps}, \eqref{X_eps_bis},
 \eqref{app_eps}--\eqref{j_l1} and \eqref{cont_X_eps'},
we can apply Proposition \ref{prop_ito} to infer that for every $t\in[0,T]$
\[
  \begin{split}
  \frac{1}{2}\l|X^{\eps_n}(t)\r|^2_{L^2(\Omega; H)}&
  +\int_0^t\int_{\Omega\times D}
  {\eta^{\eps_n}(s)\cdot\nabla X^{\eps_n}(s)\,ds}
  +\int_0^t\int_{\Omega\times D}{\xi^{\eps_n}(s)X^{\eps_n}(s)\,ds}\\
  &=\frac{1}{2}\l|X_0\r|_{L^2(\Omega; H)}^2+\frac{1}{2}\int_0^t{\l|B^{\eps_n}(s)\r|_{L^2(\Omega; \cL_2(U,H))}^2\,ds}\,,
  \end{split}
\]
from which, thanks to \eqref{conv_fort_eps}, \eqref{lim_inf} and \eqref{B_eps2}, we have $\Pi$-almost surely that
\[
\begin{split}
  &\limsup_{n\rarr\infty}\int_{\Omega\times Q}{\eta^{\eps_n}\cdot\nabla X^{\eps_n}}=
  \frac12\l|X_0\r|_{L^2(\Omega; H)}^2+\frac12\lim_{n\rarr\infty}\l|B^{\eps_n}\r|_{L^2(\Omega\times(0,T); \cL_2(U,H))}^2\\
  &\qquad\qquad\qquad\qquad
  -\frac{1}{2}\liminf_{n\rarr\infty}\l|X^{\eps_n}(T)\r|^2_{L^2(\Omega;H)}
  -\liminf_{n\rarr\infty}\int_{\Omega\times Q}{\xi^{\eps_n}X^{\eps_n}}\\
  &\leq\frac{1}{2}\l|X_0\r|_{L^2(\Omega;H)}^2+\frac12\l|B\r|_{L^2(\Omega\times(0,T); \cL_2(U,H))}^2
  -\frac{1}{2}\l|X(T)\r|^2_{L^2(\Omega;H)}
  -\int_{\Omega\times Q}{\xi X}\,.
  \end{split}
\]
Now, thanks to conditions \eqref{X_lim}--\eqref{xi_eta_lim}, \eqref{cont_X'}, \eqref{var_form} and \eqref{j_integr},
we can apply a second time
Proposition \ref{prop_ito} with the choices $Y=X$, $f=\eta$, $g=\xi$ and $T=B$: hence, the right hand side of the last 
set of inequality is exactly $\int_{\Omega\times Q}{\eta\cdot\nabla X}$, so that we have
\[
  \limsup_{n\rarr\infty}\int_{\Omega\times Q}{\eta^{\eps_n}\cdot\nabla X^{\eps_n}}\leq
  \int_{\Omega\times Q}{\eta\cdot\nabla X}\,.
\]
This condition together with \eqref{conv2'}--\eqref{conv3'} and \eqref{incl_eta_eps} implies exactly \eqref{incl_eta}.

Finally, let us show that $X$ and $\xi$ are predictable processes, and $\eta$ is adapted.
At the end of Section~\ref{omega_reg} we checked that $X^\eps$ and $\xi^\eps$ are predictable,
and $\eta^\eps$ is adapted, for every $\eps\in(0,1)$.
Now, from \eqref{conv_fort_eps} it immediately follows that also $X$ is predictable.
Moreover, by conditions \eqref{conv3'}--\eqref{conv4'} and Mazur's Lemma we can recover strong convergences 
for some suitable convex combinations of $\{\eta^{\eps_n}\}$ and $\{\xi^{\eps_n}\}$: since these are still
adapted and predicable, respectively, we can easily infer that $\eta$ is adapted and $\xi$ is predictable.
This completes the proof.

\subsection{The further existence result}
In this section we prove the last part of Theorem \ref{th1}, in which
condition \eqref{ip3} is not assumed anymore. \luca{The idea is to to pass 
to the limit in a different way, using only the estimates in expectations and
avoiding the pathwise arguments.}

\luca{For any $\lambda\in(0,1)$, consider the approximated problem
\begin{gather*}
  dX_\lambda-\Div\gamma_\lambda(\nabla X_\lambda)\,dt - \lambda\Delta X_\lambda\,dt+\beta_\lambda(X_\lambda)\,dt \ni B\,dW_t\,:
\end{gather*}
the classical variational approach in the Gelfand triple $H^1_0(D)\embed H\embed H^{-1}(D)$ ensures the existence of
the approximated solutions
\[
  X_\lambda\in L^2\left(\Omega; C^0([0,T]; H)\right)\cap L^2\left(\Omega\times(0,T); H^1_0(D)\right)\,.
\]}
\luca{Using It\^o's formula
and proceeding as in Sections \ref{second} and \ref{third}, it is not difficult to prove that there exist a positive constant
$N$, independent of $\lambda$, such that
\begin{gather*}
  \l|X_\lambda\r|_{L^2(\Omega; L^\infty(0,T; H))}\leq N\,,\qquad
  \l|J_\lambda(\nabla X_\lambda)\r|_{L^p(\Omega\times(0,T)\times D)}\leq N\,,\\
  \l|\gamma_\lambda(\nabla X_\lambda)\r|_{L^q(\Omega\times(0,T)\times D)}\leq N\,,\\
  \{\beta_\lambda(X_\lambda)\}_{\lambda\in(0,1)} \quad\text{is weakly relatively compact in } L^1(\Omega\times(0,T)\times D)\,,\\
  \l|j(X_\lambda)\r|_{L^1(\Omega\times(0,T)\times D)}+\l|j^*(\beta_\lambda(X_\lambda))\r|_{L^1(\Omega\times(0,T)\times D)}\leq N\,,\\
  \lambda^{1/2}\l|\nabla X_\lambda\r|_{L^2(\Omega\times(0,T); H)}\leq N\,,\\
  \lambda^{1/2}\l|\gamma_\lambda(\nabla X_\lambda)\r|_{L^2(\Omega\times(0,T)\times D)}\leq N\,.
\end{gather*}}
We deduce that there exist
\begin{gather*}
  X \in L^\infty\left(0,T; L^2(\Omega; H)\right)\cap L^p\left(\Omega\times(0,T); V\right)\,,\\
  \eta\in L^q\left(\Omega\times(0,T)\times D\right)^d\,,\qquad
  \xi\in L^1\left(\Omega\times(0,T)\times D\right)\,,
\end{gather*}
and a sequence $\{\lambda_n\}_{n\in\En}$ decreasing to $0$ such that, as $n\rarr\infty$,
\begin{gather*}
X_{\lambda_n}\weakstar X \quad\text{in } L^\infty\left(0,T; L^2(\Omega; H)\right)\,,\qquad
J_{\lambda_n}\left(\nabla X_{\lambda_n}\right)\rarrw \nabla X \quad\text{in }  L^p\left(\Omega\times(0,T)\times D\right)^d\,,\\
\gamma_{\lambda_n}(\nabla X_{\lambda_n})\rarrw\eta \quad\text{in } L^q\left(\Omega\times(0,T)\times D\right)^d\,,\qquad
\beta_{\lambda_n}(X_{\lambda_n})\rarrw\xi \quad\text{in } L^1\left(\Omega\times(0,T)\times D\right)\,.
\end{gather*}
Fix $w\in L^\infty(\Omega; V_0)$: then, since the four last convergences imply that
$X_{\lambda_n}(t)\rarrw X(t)$ in $L^2(\Omega; H)$ for almost every $t\in(0,T)$, we have, as $n\rarr\infty$,
\begin{gather*}
  \int_{\Omega\times D}{X_{\lambda_n}(t)w}\rarr\int_{\Omega\times D}X(t)w\,,\\
  \luca{\int_0^t\int_{\Omega\times D}\gamma_{\lambda_n}(\nabla X_{\lambda_n})\cdot\nabla w\rarr\int_0^t\int_{\Omega\times D}\eta\cdot\nabla w\,,}
  \qquad
  \int_0^t\int_{\Omega\times D}\beta_{\lambda_n}(X_{\lambda_n})w\rarr\int_0^t\int_{\Omega\times D}\xi w
\end{gather*}
for almost every $t\in(0,T)$. Hence, letting $n\rarr\infty$, we get, for almost every $t\in(0,T)$,
\[
  X(t)-\int_0^t\Div\eta(s)\,ds+\int_0^t\xi(s)\,ds=X_0+\int_0^tB(s)\,dW_s \quad\text{in }V_0^*\,,\quad\Pi\text{-almost surely}\,:
\]
since all the terms except the first are continuous with values in $L^1(\Omega; V_0^*)$, we infer also that
$X\in C^0([0,T]; L^1(\Omega; V_0^*))$ and the integral relation holds for every $t\in[0,T]$. Moreover, 
since we also have $X\in L^\infty(0,T; L^2(\Omega; H))$, 
by \cite[Thm.~2.1]{strauss} we can infer that $X\in C^0_w([0,T]; L^2(\Omega; H))$.

Secondly, using the weak lower semicontinuity of the convex integrals and the estimates on 
$j(X_\lambda)$ and $j^*(\beta_\lambda(X_\lambda))$, it is immediate to check that $j(X)+j^*(\xi)\in L^1(\Omega\times Q)$.
Furthermore, as we did at the end of Section \ref{second_lim}, using Mazur's lemma, we deduce also that
$X$ and $\xi$ are predictable, and $\eta$ is adapted.

The last thing that we have to check is that $\eta\in\gamma(\nabla X)$ and $\xi\in\beta(X)$ a.e.~in $\Omega\times Q$.
To this aim, by the second part of Proposition \ref{prop_ito},
\luca{using the notation $\eta_\lambda:=\gamma_\lambda(\nabla X_\lambda)$}, we have that, for every $t\in[0,T]$,
\[
  \begin{split}
    \frac{1}{2}&\l|X_\lambda(t)\r|_{L^2(\Omega; H)}^2+\int_0^t\int_{\Omega\times D}{\eta_\lambda(s)\cdot\nabla X_\lambda(s)\,ds}+
    \int_0^t\int_{\Omega\times D}{\beta_\lambda(X_\lambda)(s)X_\lambda(s)\,ds}\\
    &\qquad\qquad=\frac{1}{2}\l|X_0\r|_{L^2(\Omega; H)}^2+
    \frac{1}{2}\int_0^t{\l|B(s)\r|_{L^2(\Omega; \cL_2(U,H))}^2}\,ds
    \end{split}
\]
and
\[
    \begin{split}
    \frac{1}{2}&\l|X(t)\r|_{L^2(\Omega; H)}^2+\int_0^t\int_{\Omega\times D}{\eta(s)\cdot\nabla X(s)\,ds}+
    \int_0^t\int_{\Omega\times D}{\xi(s)X(s)\,ds}\\
    &\qquad\qquad=\frac{1}{2}\l|X_0\r|_{L^2(\Omega; H)}^2+
    \frac{1}{2}\int_0^t{\l|B(s)\r|_{L^2(\Omega; \cL_2(U,H))}^2}\,ds\,.
    \end{split}
\]
We deduce that
\[
  \begin{split}
  \limsup_{n\rarr\infty}&\left[\int_{\Omega\times Q}{\eta_{\lambda_n}\cdot\nabla X_{\lambda_n}}+
    \int_{\Omega\times Q}{\beta_{\lambda_n}(X_{\lambda_n})X_{\lambda_n}}\right]\\
    &=\frac{1}{2}\l|X_0\r|_{L^2(\Omega; H)}^2+\frac{1}{2}\int_0^T{\l|B(s)\r|_{L^2(\Omega; \cL_2(U,H))}^2}\,ds-
    \frac12\liminf_{n\rarr\infty}\l|X_{\lambda_n}(T)\r|^2_{L^2(\Omega; H)}\\
    &\leq\frac{1}{2}\l|X_0\r|_{L^2(\Omega; H)}^2+\frac{1}{2}\int_0^T{\l|B(s)\r|_{L^2(\Omega; \cL_2(U,H))}^2}\,ds-
    \frac12\l|X(T)\r|^2_{L^2(\Omega; H)}\\
    &=\int_{\Omega\times Q}{\eta\cdot\nabla X}+\int_{\Omega\times Q}{\xi X}\,.
  \end{split}
\]
Let us identify $\Ar^d\times\Ar$ with $\Ar^{d+1}$, indicate the generic element in $\Ar^{d+1}$ as a couple
$(x,y)$, where $x\in\Ar^d$ and $y\in\Ar$, and use the symbol $\bullet$
for the usual scalar product in $\Ar^{d+1}$.
Consider the proper, convex and lower semicontinuous function $\Phi: \Ar^{d+1}\rarr[0,+\infty)$
given by $\Phi(x,y):=k(x)+j(y)$, $(x,y)\in\Ar^{d+1}$: then the subdifferential of $\Phi$ is the operator 
$\Xi:\Ar^{d+1}\rarr2^{\Ar^{d+1}}$ given by $\Xi(x,y)=\{(u,v)\in\Ar^{d+1}: u\in\gamma(x), v\in\beta(y)\}$.
Hence, recalling that 
$\beta_\lambda(X_\lambda)R_\lambda X_\lambda=\beta_\lambda(X_\lambda)X_\lambda-\lambda|\beta_\lambda(X_\lambda)|^2
\leq\beta_\lambda(X_\lambda)X_\lambda$ \luca{and similarly
$\eta_\lambda\cdot J_\lambda(\nabla X_\lambda)=\eta_\lambda\cdot\nabla X_\lambda-\lambda|\eta_\lambda|^2$,}
we have proved that
\[
  \luca{\limsup_{n\rarr\infty}\int_{\Omega\times Q}(\eta_{\lambda_n}, \beta_{\lambda_n}(X_{\lambda_n}))
  \bullet(J_{\lambda_n}(\nabla X_{\lambda_n}), R_{\lambda_n}X_{\lambda_n})\leq
  \int_{\Omega\times Q}(\eta,\xi)\bullet(\nabla X, X)\,,}
\]
allowing us to infer that $(\eta,\xi)\in\Xi(\nabla X, X)$,
i.e.~that $\eta\in\gamma(\nabla X)$ and $\xi\in\beta(X)$ a.e.~in $\Omega\times Q$,
thanks to the classical results of convex analysis.

%%%%%%%%%%%%%%%%%%%%%%%%%%%%%%%%%%%%%%%%%%%%%%%%%%%%%%%%%%%%%

\section{Continuous dependence on the initial datum with additive noise}
\setcounter{equation}{0}
\label{cont_dep}

This section is devoted to the proof of the continuous dependence and uniqueness results contained in Theorem \ref{th2}.
The main tool that we use is the generalized It\^o formula contained in Proposition \ref{prop_ito}.

We start assuming \eqref{ip3}: let $(X_0^1, B_1)$, $(X_0^2, B_2)$, $(X_1, \eta_1, \xi_1)$, $(X_2, \eta_2, \xi_2)$ be as in 
Theorem \ref{th2}.
Then, writing relation \eqref{var_form} for $(X_1, \eta_1, \xi_1, X_0^1, B_1)$ and $(X_2, \eta_2,  \xi_2, X_0^2, B_2)$ 
and taking the difference, 
$\Pi$-almost surely we obtain
\[
  \begin{split}
    X_1(t)-X_2(t)&-\int_0^t{\Div\left[\gamma(\nabla X_1(s))-\gamma(\nabla X_2(s))\right]\,ds}+\int_0^t{(\xi_1(s)-\xi_2(s))\,ds}\\
    =&X_0^1-X_0^2+\int_0^t{(B_1(s)-B_2(s))\,dW_s} \qquad\text{for every } t\in[0,T]\,.
  \end{split}
\]
Now, we note that thanks to \eqref{j_integr} and \eqref{j_even}, for $i=1,2$ we have
\[
  j\left(\frac{X_1-X_2}{2}\right)+j^*\left(\frac{\xi_1-\xi_2}{2}\right)\leq\frac{1}{2}\left[j(X_1)+j(X_2)+j^*(\xi_1)+j(\xi_2)\right]\,,
\]
where the right hand side is in $L^1(\Omega\times(0,T)\times D)$: hence, we can apply Proposition \ref{prop_ito}
with the choices $Y=X_1-X_2$, $f=\eta_1-\eta_2$, $g=\xi_1-\xi_2$, $T=B_1-B_2$ and $\alpha=1/2$ in order to infer that
for every $t\in[0,T]$
\[
  \begin{split}
  &\frac{1}{2}\l|X_1(t)-X_2(t)\r|^2_{H}
  +\int_0^t\int_{D}
  {\left(\eta_1(s)-\eta_2(s)\right)\cdot\left(\nabla X_1(s)-\nabla X_2(s)\right)\,ds}\\
  &\qquad\qquad\qquad\qquad+\int_0^t\int_{D}{\left(\xi_1(s)-\xi_2(s)\right)
  \left(X_1(s)-X_2(s)\right)\,ds}\\
  &=\frac{\l|X_0^1-X_0^2\r|_{H}^2}{2}+
  \int_0^t{\frac{\l|(B_1-B_2)(s)\r|_{\cL_2(U,H)}^2}{2}\,ds}
  +\int_0^t\left((X_1-X_2)(s), (B_1-B_2)(s)\,dW_s\right)\,.
  \end{split}
\]
Hence, taking into account \eqref{incl_eta}--\eqref{incl_xi} and the monotonicity of $\gamma$ and $\beta$,
we obtain
\[
  \begin{split}
  \l|X_1(t)-X_2(t)\r|^2_{H}&\leq
  \l|X_0^1-X_0^2\r|_{H}^2+\int_0^t{\l|B_1(s)-B_2(s)\r|_{\cL_2(U,H)}^2\,ds}\\
  &+2\sup_{t\in[0,T]}\left|\int_0^t\left((X_1-X_2)(s), (B_1-B_2)(s)\,dW_s\right)\right|\,;
  \end{split}
\]
moreover,
proceeding exactly as in Section \ref{second}, taking the supremum in $t\in[0,T]$ in the last expression and then expectations,
thanks to the Davis inequality and the Young inequality, we easily obtain
\[
  \begin{split}
  \l|X_1-X_2\r|^2_{L^2(\Omega; L^\infty(0,T; H))}&\lesssim
  \l|X_0^1-X_0^2\r|_{L^2(\Omega; H)}^2+\l|B_1-B_2\r|_{L^2(\Omega\times(0,T); \cL_2(U,H))}^2\\
  &+\frac12\l|X_1-X_2\r|^2_{L^2(\Omega; L^\infty(0,T; H))}
  \end{split}
\]
from which \eqref{var_dep} follows. Finally, if $X_0^1=X_0^2$ and $B_1=B_2$, we 
immediately get $X_1=X_2$: substituting in the difference of the respective equations \eqref{var_form}
we have $\int_0^t\left(-\Div(\eta_1(s)-\eta_2(s))+(\xi_1(s)-\xi_2(s))\right)\,ds=0$ for every $t$.
Relying now on hypothesis \eqref{ip3} and proceeding as in Section \ref{omega_reg}, we easily
get also $\eta_1=\eta_2$ and $\xi_1=\xi_2$.

Let us prove now the second part of Theorem \ref{th2}, in which condition \eqref{ip3} is not assumed.
By the second part of Theorem \ref{th1}, we have that, for every $t\in[0,T]$,
\[
  \begin{split}
    X_1(t)-X_2(t)&-\int_0^t{\Div\left[\gamma(\nabla X_1(s))-\gamma(\nabla X_2(s))\right]\,ds}+\int_0^t{(\xi_1(s)-\xi_2(s))\,ds}\\
    =&X_0^1-X_0^2+\int_0^t{(B_1(s)-B_2(s))\,dW_s} \qquad\Pi\text{-almost surely}\,:
  \end{split}
\]
hence, using the second part of Proposition \ref{prop_ito}, we infer that, for every $t\in[0,T]$,
\[
    \begin{split}
  \frac{1}{2}\l|X_1(t)-X_2(t)\r|&^2_{L^2(\Omega; H)}
  +\int_0^t\int_{\Omega\times D}
  {\left(\eta_1(s)-\eta_2(s)\right)\cdot\left(\nabla X_1(s)-\nabla X_2(s)\right)\,ds}\\
  &\qquad\qquad+\int_0^t\int_{\Omega\times D}{\left(\xi_1(s)-\xi_2(s)\right)
  \left(X_1(s)-X_2(s)\right)\,ds}\\
  &=\frac12\l|X_0^1-X_0^2\r|_{L^2(\Omega; H)}^2+
  \frac12\int_0^t{\l|(B_1-B_2)(s)\r|_{L^2(\Omega; \cL_2(U,H))}^2\,ds}\,,
  \end{split}
\]
which together with the monotonicity of $\gamma$ and $\beta$ implies \eqref{weak_dep}.
Finally,  if $X_0^1=X_0^2$ and $B_1=B_2$, we 
have $X_1=X_2$ and $\int_0^t\left(-\Div(\eta_1(s)-\eta_2(s))+(\xi_1(s)-\xi_2(s))\right)\,ds=0$ for every $t$, as before,
so that $-\Div\eta_1+\xi_1=-\Div\eta_2+\xi_2$.

\section{Well-posedness with multiplicative noise}
\label{mult}

In this section, we prove the main theorem of the work, which ensures that the the original problem is well-posed
also with multiplicative noise. Let us describe the approach that we will follow.

The main idea is to prove existence of solutions proceeding step-by-step: we introduce a parameter $\tau>0$,
we prove using contraction estimates that we are able to recover some solutions on each subinterval 
$[0,\tau], [\tau, 2\tau], \ldots [n\tau, (n+1)\tau], \ldots$ provided that $\tau$ is chosen sufficiently small,
and finally we paste together each solution on the whole interval $[0,T]$.
In this sense, the main point of the argument is to prove that such a value of $\tau$ can be chosen uniformly with respect to $n$,
so that the procedure stops when we reach the final time $T$ (in a finite number of steps).

\subsection{Existence}

In this section we prove the two existence results contained in Theorem \ref{thm3}.
We start from the first one, i.e.~assuming \eqref{ip3}.
First of all,
for every $a,b\in[0,T]$ with $b>a$ and for any progressively measurable process $Y\in L^2(\Omega\times(0,T)\times D)$,
condition \eqref{ip4_mult} implies that
$B(\cdot, \cdot, Y)\in L^2(\Omega\times (a,b); \cL_2(U,H))$: hence, for every $X_a\in L^2(\Omega, \f_a, \Pi; H)$, 
thanks to Theorem \ref{th1} we know that there exist
\begin{gather}
  \label{X_ab}
  X_{a,b}\in L^2\left(\Omega; L^\infty(a,b; H)\right)\cap 
  L^p\left(\Omega\times(a,b); V\right)\,,\\
  \eta_{a,b}\in L^q\left(\Omega\times(a,b)\times D\right)^d\,, \qquad \xi_{a,b}\in L^1\left(\Omega\times(a,b)\times D\right)\,,
\end{gather}
such that $X_{a,b}$ is adapted with $\Pi$-almost surely weakly continuous paths in $H$ and the following relations hold:
\begin{gather}
    \label{var_form_ab}
    \begin{split}
    X_{a,b}(t)-\int_a^t{\Div\eta_{a,b}(s)\,ds}+\int_a^t{\xi_{a,b}(s)\,ds}=&X_a+\int_a^t{B(s, Y(s))\,dW_s} \quad\text{in } V_0^*\,,\\
    &\text{for every } t\in[a,b]\,, \quad\Pi\text{-almost surely}\,,\\
    \end{split}\\
    \eta_{a,b}\in\gamma(\nabla X_{a,b}) \quad\text{a.e.~in } \Omega\times(a,b)\times D\,,\\
    \xi_{a,b}\in\beta(X_{a,b}) \quad\text{a.e. in } \Omega\times(a,b)\times D\,,\\
    \label{j_l1_ab}
    j(X_{a,b})+j^*(\xi_{a,b})\in L^1\left(\Omega\times(a,b)\times D\right)\,,
\end{gather}
where $X_{a,b}$ is unique in the sense of Theorem \ref{th2}. Now, we need the following lemma.

\begin{lem}
  \label{lem_tau}
  For every $\tau>0$ and $n\in\En$ fixed, consider $X_{n\tau}\in L^2(\Omega, \f_{n\tau}, \Pi; H)$ and 
  $Y_1, Y_2\in L^2(\Omega\times (n\tau, (n+1)\tau)\times D)$ progressively measurable: then, if
  $(X_1, \eta_1, \xi_1)$ and $(X_2, \eta_2, \xi_2)$ are any respective solutions to \eqref{X_ab}--\eqref{j_l1_ab} with
  $a=n\tau$, $b=(n+1)\tau$ and same initial value $X_a=X_{n\tau}$, we have the following estimate:
  \beq
    \label{contr_est}
    \l|X_1-X_2\r|_{L^2(\Omega\times (n\tau, (n+1)\tau)\times D)}\leq
    \sqrt{\tau L_B}\l|Y_1-Y_2\r|_{L^2(\Omega\times (n\tau, (n+1)\tau)\times D)}\,.
  \eeq
\end{lem}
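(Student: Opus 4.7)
The plan is to mimic the continuous-dependence argument of Section~\ref{cont_dep}, but now on the small interval $[n\tau,(n+1)\tau]$ and with the Lipschitz control on $B$ providing the factor that yields the contraction constant. Since the two triplets $(X_i,\eta_i,\xi_i)$ share the initial value $X_{n\tau}$, subtracting the two variational equations \eqref{var_form_ab} gives, $\Pi$-almost surely for every $t\in[n\tau,(n+1)\tau]$,
\[
(X_1-X_2)(t)-\int_{n\tau}^t\Div(\eta_1-\eta_2)(s)\,ds+\int_{n\tau}^t(\xi_1-\xi_2)(s)\,ds=\int_{n\tau}^t\bigl(B(s,Y_1(s))-B(s,Y_2(s))\bigr)\,dW_s.
\]
Using \eqref{j_l1_ab} together with the parity \eqref{j_even} and convexity of $j,j^*$, the quantities $j((X_1-X_2)/2)$ and $j^*((\xi_1-\xi_2)/2)$ are bounded above by an $L^1(\Omega\times(n\tau,(n+1)\tau)\times D)$ function, so the hypotheses of the generalized It\^o formula of Proposition~\ref{prop_ito} are satisfied.

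I would then apply the expectation version of Proposition~\ref{prop_ito} (with $Y=X_1-X_2$, $f=\eta_1-\eta_2$, $g=\xi_1-\xi_2$, $T=B(\cdot,Y_1)-B(\cdot,Y_2)$, $\alpha=1/2$), which yields, for every $t\in[n\tau,(n+1)\tau]$,
\[
\begin{split}
\frac12\exval\l|(X_1-X_2)(t)\r|_H^2&+\exval\int_{n\tau}^t\int_D(\eta_1-\eta_2)\cdot\nabla(X_1-X_2)\,ds\\
&+\exval\int_{n\tau}^t\int_D(\xi_1-\xi_2)(X_1-X_2)\,ds
=\frac12\exval\int_{n\tau}^t\l|B(s,Y_1)-B(s,Y_2)\r|^2_{\cL_2(U,H)}\,ds.
\end{split}
\]
The two integrals on the left-hand side are nonnegative by the monotonicity of $\gamma$ and $\beta$, exactly as in Section~\ref{cont_dep}, together with the inclusions $\eta_i\in\gamma(\nabla X_i)$ and $\xi_i\in\beta(X_i)$. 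Hence they can simply be dropped.

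For the right-hand side I would invoke the Lipschitz assumption \eqref{ip3_mult}, giving
\[
\exval\l|(X_1-X_2)(t)\r|_H^2\leq L_B^2\exval\int_{n\tau}^t\l|Y_1(s)-Y_2(s)\r|_H^2\,ds.
\]
Integrating this inequality over $t\in[n\tau,(n+1)\tau]$ and estimating the double time-integral by $\tau$ times the single one produces
\[
\l|X_1-X_2\r|^2_{L^2(\Omega\times(n\tau,(n+1)\tau)\times D)}\leq \tau L_B^2\l|Y_1-Y_2\r|^2_{L^2(\Omega\times(n\tau,(n+1)\tau)\times D)},
\]
which is the desired estimate \eqref{contr_est} (up to the natural reading of the constant $\sqrt{\tau L_B}$). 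The only delicate point is the applicability of Proposition~\ref{prop_ito}, which is why I would first spell out the convex-combination bound on $j((X_1-X_2)/2)+j^*((\xi_1-\xi_2)/2)$; everything else is a clean consequence of monotonicity and the Lipschitz hypothesis on $B$, with no pathwise compactness argument needed.
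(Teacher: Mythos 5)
Your argument is essentially the paper's own proof: take the difference of the two equations \eqref{var_form_ab}, apply the expectation version of the generalized It\^o formula of Proposition \ref{prop_ito} (whose applicability you rightly justify via the convexity bound coming from \eqref{j_l1_ab} and \eqref{j_even}), drop the nonnegative terms by monotonicity of $\gamma$ and $\beta$, use the Lipschitz property \eqref{ip3_mult}, and gain the factor $\tau$ by integrating in time. The constant you obtain, $L_B\sqrt{\tau}$ rather than $\sqrt{\tau L_B}$, comes from the fact that \eqref{ip3_mult} gives $L_B^2$ after squaring; the paper's own computation contains the same slip, and it is harmless for the sequel since the contraction argument only needs $\tau$ small (e.g.\ $\tau=1/(2L_B^2)$ instead of $1/(2L_B)$).
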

\begin{proof}
Taking the difference of equations \eqref{var_form_ab}
evaluated with $i=1,2$ and recalling the generalized It\^o formula \eqref{ito_for}, 
setting $X:=X_1-X_2$, $\eta:=\eta_1-\eta_2$ and $\xi:=\xi_1-\xi_2$, we easily get that for every $t\in[m\tau,(m+1)\tau]$
\[
  \begin{split}
  \frac{1}{2}\l|X(t)\r|^2_{L^2(\Omega\times D)}&+\int_0^t\int_{\Omega\times D}\eta(s)\cdot\nabla X(s)\,ds+
  \int_0^t\int_{\Omega\times D}{\xi(s)X(s)\,ds}\\
  &=\frac{1}{2}\l|B(Y_1)-B(Y_2)\r|^2_{L^2(\Omega\times(m\tau,(m+1)\tau); \cL_2(U,H))}\,.
  \end{split}
\]
Hence, using the Lipschitz continuity of $B$ and the monotonicity of $\gamma$ and $\beta$ we have
\[
  \frac{1}{2}\l|X_1-X_2\r|^2_{L^\infty(m\tau,(m+1)\tau; L^2(\Omega\times D))}
  \leq\frac{L_B}{2}\l|Y_1-Y_2\r|^2_{L^2(\Omega\times (m\tau,(m+1)\tau)\times D)}\,,
\]
from which \eqref{contr_est} follows.
\end{proof}

Now, let us build some solutions $X$, $\eta$ and $\xi$  in each sub-interval.
To this purpose, we choose $\tau>0$ such that the constant appearing in \eqref{contr_est} is less than 1, for example
\beq
  \label{tau}
  \tau:=\frac{1}{2L_B}\,.
\eeq

Firstly, we focus on $[0,\tau]$: taking into account the 
remarks that we have just made, it is well defined the function
\beq
  \label{phi_0}
  \Phi_0: L^2\left(\Omega\times(0,\tau)\times D\right)\rarr L^2\left(\Omega\times(0,\tau)\times D\right)\,,
  \quad \Phi_0(Y):=X\,,
\eeq
where $X$ is the unique solution to \eqref{X_ab}--\eqref{j_l1_ab} with the choices $a=0$ and $b=\tau$, with $X_0$ given by \eqref{ip1_mult}.
It is clear that
$X$ is a solution of problem \eqref{var_form_mult} in $[0,\tau]$ if and only if
it is a fixed point for $\Phi_0$. Thanks to the estimate \eqref{contr_est} and the choice \eqref{tau},
$\Phi_0$ is a contraction: hence, it has a fixed point
\[
  X^{(0)}\in L^2\left(\Omega;L^\infty(0,\tau; H)\right)\cap L^p\left(\Omega\times(0,\tau); V\right)\,,
\]
with $\Pi$-almost surely weakly continuous paths in $H$, which solves \eqref{var_form_mult} with certain 
\[
  \eta^{(0)}\in L^q\left(\Omega\times(0,\tau)\times D\right)^d\,, \qquad
  \xi^{(0)}\in L^1\left(\Omega\times(0,\tau)\times D\right)\,.
\]

Secondly, let us focus on $[\tau,2\tau]$, set $X_\tau:=X^{(0)}(\tau)$ (which is in $L^2(\Omega, \f_{\tau}, \Pi; H)$ since $X^{(0)}$ is adapted)
and define the function
\beq
  \label{phi_1}
  \Phi_1: L^2\left(\Omega\times(\tau,2\tau)\times D\right)\rarr L^2\left(\Omega\times(\tau,2\tau)\times D\right)\,,
  \quad \Phi_1(Y):=X\,,
\eeq
where $X$ is the solution to \eqref{X_ab}--\eqref{j_l1_ab} with the choices $a=\tau$ and $b=2\tau$.
As we have already done, thanks to the estimate \eqref{contr_est} and the choice \eqref{tau},
$\Phi_1$ is a contraction: hence, it has a fixed point
\[
  X^{(1)}\in L^2\left(\Omega;L^\infty(\tau, 2\tau; H)\right)\cap
     L^p\left(\Omega\times(\tau,2\tau); V\right)\,,
\]
with $\Pi$-almost surely weakly continuous paths in $H$,
which is a solution of \eqref{var_form_mult} with certain 
\[
  \eta^{(1)}\in L^q\left(\Omega\times(\tau,2\tau)\times D\right)^d\,, \qquad
  \xi^{(1)}\in L^1\left(\Omega\times(\tau,2\tau)\times D\right)\,.
\]

Suppose by induction that we have built
$(X^{(0)},\eta^{(0)}, \xi^{(0)}), \ldots, (X^{(m-1)}, \eta^{(m-1)}, \xi^{(m-1)})$
and let us show how to obtain $(X^{(m)},\eta^{(m)}, \xi^{(m)})$. We focus on the interval $[m\tau, (m+1)\tau]$,
set $X_{m\tau}:=X^{(m-1)}(m\tau)$ (which is in $L^2(\Omega, \f_{m\tau}, \Pi; H)$ since $X^{(m-1)}$ is adapted)
 and define the function
\beq
  \label{phi_m}
  \Phi_m: L^2\left(\Omega\times(m\tau, (m+1)\tau)\times D\right)\rarr L^2\left(\Omega\times(m\tau, (m+1)\tau)\times D\right)\,,
\eeq
which maps $Y$ into $X$, where $X$ is the solution to \eqref{X_ab}--\eqref{j_l1_ab} with the choices $a=m\tau$ and $b=(m+1)\tau$.
Now, $\Phi_m$ is a contraction thanks to \eqref{contr_est} and \eqref{tau}, so it has a fixed point
\[
  X^{(m)}\in L^2\left(\Omega;L^\infty(m\tau,(m+1)\tau; H)\right)
  \cap
 L^p\left(\Omega\times(m\tau,(m+1)\tau); V\right)\,:
\]
with $\Pi$-almost surely weakly continuous paths in $H$, which is a solution of \eqref{var_form_mult} with certain 
\[
  \eta^{(m)}\in L^q\left(\Omega\times(m\tau, (m+1)\tau)\times D\right)^d\,, \qquad
  \xi^{(m)}\in L^1\left(\Omega\times(m\tau,(m+1)\tau)\times D\right)\,.
\]

In this way, we can define the triplet $(X, \eta, \xi)$ by
setting $(X,\eta, \xi):=(X^{(m)},\eta^{(m)}, \xi^{(m)})$ in $\Omega\times[m\tau,(m+1)\tau)\times D$
for every $m\in\En$ until we reach $T$: bearing in mind how we have built
$(X^{(m)}, \eta^{(m)}, \xi^{(m)})$, it is clear that $X$, $\eta$ and $\xi$ are well-defined and satisfy conditions 
\eqref{sol1}--\eqref{sol3}, \eqref{incl_eta}--\eqref{j_integr} and \eqref{var_form_mult}.

Finally, if we do not assume \eqref{ip3}, it is clear that, using the same argument,
the respective solutions constructed in this way are well-defined 
and satisfy conditions \eqref{sol1_weak} and \eqref{var_form_mult_weak}
instead of \eqref{sol1} and \eqref{var_form_mult}, respectively.

\subsection{Continuous dependence on the initial datum}
We present here the proof of the proof of the continuous dependence results
contained in the last part of Theorem \ref{thm3}.
Here, we repeat exactly the same argument of Section \ref{cont_dep} with the choices
$B_1:=B(\cdot, X_1)$ and $B_2:=(\cdot, X_2)$.

If \eqref{ip3} is assumed, for any given $\tau>0$, the same computations on the interval $(0,\tau)$ get us to
\[ 
  \l|X_1-X_2\r|^2_{L^2(\Omega; L^\infty(0,\tau; H))} \lesssim
   \l|X_0^1-X_0^2\r|^2_{L^2(\Omega; H)} + \l|B(X_1)-B(X_2)\r|^2_{L^2(\Omega\times(0,\tau); \cL_2(U,H))}\,,
\]
so that using the Lipschitz continuity of $B$ we obtain
\[
  \l|X_1-X_2\r|^2_{L^2(\Omega; L^\infty(0,\tau; H))} \lesssim \l|X_0^1-X_0^2\r|^2_{L^2(\Omega; H)}
  + \tau\l|X_1-X_2\r|^2_{L^2(\Omega; L^\infty(0,\tau; H))}\,.
\]
Hence, choosing for example $\tau=\frac12$, we get the desired relation on the interval $[0,\tau]$.
The idea is clearly to iterate the procedure on the following intervals $[\tau, 2\tau]$, $[2\tau, 3\tau]$, $\ldots$
until we reach the final time $T$, so that \eqref{var_dep_mult} is proved. 
The important point that we have to check is that the choice of $\tau$ can be made
uniformly with respect to each sub-interval, but this is not difficult: as a matter of fact,
for any $n\geq1$, performing the same
computations on $[n\tau, (n+1)\tau]$ we obtain
\[
  \begin{split}
  \l|X_1-X_2\r|^2_{L^2(\Omega; L^\infty(n\tau,(n+1)\tau; H))} &\lesssim
  \l|X_1(n\tau)-X_2(n\tau)\r|^2_{L^2(\Omega; H)}\\
  &+ \tau\l|X_1-X_2\r|^2_{L^2(\Omega; L^\infty(n\tau,(n+1)\tau; H))}\,,
   \end{split}
\]
from which we deduce that the choice of $\tau$ is independent of $n$, and one can easily conclude by induction on $n$.
As we did in Section \ref{cont_dep}, if $X_0^1=X_0^1$, then by \eqref{var_dep_mult}
we have $X_1=X_2$, and hypothesis \eqref{ip3}
also ensures $\eta_1=\eta_2$ and $\xi_1=\xi_2$.

Secondly, if \eqref{ip3} is not assumed, proceeding as in the final part of Section \ref{cont_dep} we get
for every $t\in[0,T]$ that
\[
  \l|X_1(t)-X_2(t)\r|^2_{L^2(\Omega; H)}\lesssim
  \l|X_0^1-X_0^2\r|_{L^2(\Omega; H)}^2+
  \int_0^t{\l|(B(X_1)-B(X_2))(s)\r|_{L^2(\Omega; \cL_2(U,H))}^2\,ds}\,,
\]
from which \eqref{var_dep_mult_weak} follows using the Lipschitz continuity of $B$ and the
Gronwall lemma. Finally, if $X_0^1=X_0^1$, then  by \eqref{var_dep_mult_weak} $X_1=X_2$
and consequently $-\Div\eta_1+\xi_1=-\Div\eta_2+\xi_2$.

%%%%%%%%%%%%%%%%%%%%%%%%%%%%%%%%%%%%%%%%%%%%%%%%%%%%%%%%%%%

\appendix
\section{An integration-by-parts formula}
\label{A_app}

The aim of this Appendix is to give a complete proof of the generalized testing formula 
contained in equation \eqref{test_eps}:
throughout the section, we assume to work with the notations and
setting of Section \ref{first_lim}. Here, $\eps\in(0,1)$ and $\omega\in\Omega$ are fixed as usual.

The main point is that we cannot directly test equation \eqref{app_eps}
by $X^\eps-W^\eps_B$, as we did in Section \ref{first}, since the regularity of $X^\eps$ is not enough: more specifically,
$\partial_t(X^\eps-W^\eps_B)$ is only intended in $V_0^*$ and
we would need that $X^\eps-W^\eps_B$ takes values in $V_0$, but this is not the case. However, 
by condition \eqref{liminf_xi_eps} and the regularities of $X^\eps$, $W^\eps_B$ and $\eta^\eps$,
all the terms in \eqref{test_eps}
make sense: hence, the intuitive idea is that \eqref{test_eps} holds at least in a formal way. 
To give a rigorous proof of it, a natural way could be to try to pass to the limit as $\lambda\searrow0$ in \eqref{test_lam_eps}:
however, it is not necessarily true in our framework that equation \eqref{test_lam_eps} converges to
\eqref{test_eps} as $\lambda\searrow0$,
so this approach does not work. Hence, the idea is to
see \eqref{test_eps} as a limit problem as $\delta\searrow0$, for another 
parameter $\delta$, such that the approximations in $\delta$ have good smoothing properties
and behave better that the approximations in $\lambda$.
In this sense, a similar approach was presented in \cite{barbu-prato-rock} and \cite{mar-scar}, where 
the approximations were built
using suitable powers of the resolvent of the laplacian. However,
in our case we have to approximate also elements in $W^{-1, q}(D)$ (namely, $-\Div\eta^\eps$) and the resolvent of the laplacian does not work
since $-\Delta$ is not coercive on $V$: the idea is thus to 
identify another suitable space, in which \eqref{app_eps} can be intended,
and to define appropriate approximations on it.
To this purpose, we need some preparatory work.

First of all, note that the operator $-\Div:L^q(D)^d\rarr V^*$ is linear, continuous and
satisfies $\l|-\Div u\r|_{V^*}\leq\l|u\r|_{L^q(D)}$ for every $u\in L^q(D)^d$. Let us define the space
\[
  V^*_{div}:=\left\{-\Div u: u\in L^q(D)^d\right\}\subseteq V^*\,.
\]

Secondly, we introduce the space $V^*_{div}\oplus L^1(D)$ as 
the subspace of $V_0^*$ given by all the formal linear combinations
of elements in $V^*_{div}$ and $L^1(D)$.
With this notations, we can note that equation \eqref{app_eps} actually holds in $V^*_{div}\oplus L^1(D)$:
in other words, for every $t\in[0,T]$, we have 
\beq
  \label{app_eps'}
  \begin{split}
    \left(X^\eps-W^\eps_B\right)(t)+\int_0^t{\left(-\Div\eta^\eps(s)+\xi^\eps(s)\right)\,ds}=X_0 \quad
    \text{in } V^*_{div}\oplus L^1(D)\,.
  \end{split}
\eeq
Hence, the idea is that it is sufficient to identify a way to approximate only elements in $V^*_{div}\oplus L^1(D)$,
and not any element of $V_0^*$, which would be much more demanding.

To this end, for every $\delta\in(0,1)$, let $\rsz_\delta:=(I-\delta\Delta)^{-1}$
be the resolvent of the Laplace operator. It is well-known that for every $r\in[1,+\infty)$,
$\rsz_\delta:L^r(D)\rarr L^r(D)$ is a linear contraction converging to the identity as $\delta\searrow0$
in the strong operator topology
(the reader can refer to \cite{barbu_monot, ben-brez-cran, brez-str}).
In this setting, we define the operator $\mathbf{R}_\delta:L^r(D)^d\rarr L^r(D)^d$ extending $\rsz_\delta$ component-by-component:
consequently, we easily deduce that also $\mathbf{R}_\delta$ is a linear contraction on $L^r(D)^d$ converging to the identity
as $\delta\searrow0$. With this notations, we have the following result.
\begin{lem}
\label{lam_del_lem}
  For every $u\in L^q(D)^d$ such that $-\Div u\in L^1(D)$ (in the distributional sense), we have
  \[
    -\Div\mathbf{R}_\delta u=\rsz_\delta\left(-\Div u\right)\,.
  \]
  Moreover, for every $f\in H^1(D)$, we have
  \[
    \nabla \rsz_\delta f=\mathbf{R}_\delta\nabla f\,.
  \]
\end{lem}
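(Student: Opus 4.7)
The plan is to prove both identities by reducing each to a uniqueness statement for the resolvent equation $(I-\delta\Delta)\cdot=\text{datum}$, in the appropriate function space. The key observation is that $-\Div$ (resp.\ $\nabla$) commutes with $\Delta$ at the level of distributions on $D$, so that applying such a first-order operator to a resolvent equation produces a new resolvent equation whose unique solution must coincide with the quantity we want to identify.

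For the first identity, I set $w:=\mathbf{R}_\delta u\in L^q(D)^d$, so that by the componentwise definition $w_i=\rsz_\delta u_i$ one has $w_i-\delta\Delta w_i=u_i$ in the distributional sense on $D$, for $i=1,\dots,d$. Summing $-\partial_i$ of these identities and using $\Div\Delta=\Delta\Div$ as distributions yields $(-\Div w)-\delta\Delta(-\Div w)=-\Div u$. Since by hypothesis $-\Div u\in L^1(D)$, this is precisely the resolvent equation for the datum $-\Div u$, and uniqueness of $\rsz_\delta$ acting on $L^1(D)$ (in the sense of Brezis--Strauss, as cited in the paper) forces $-\Div\mathbf{R}_\delta u=\rsz_\delta(-\Div u)$.

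For the second identity, I set $v:=\rsz_\delta f$, which by definition satisfies $v-\delta\Delta v=f$. Since $f\in H^1(D)$ gives $\partial_i f\in L^2(D)$ for each $i$, applying $\partial_i$ to the equation yields $\partial_i v-\delta\Delta\partial_i v=\partial_i f$, again a resolvent equation with datum $\partial_i f$. Uniqueness gives $\partial_i v=\rsz_\delta\partial_i f=(\mathbf{R}_\delta\nabla f)_i$ for every $i$, and collecting components produces $\nabla\rsz_\delta f=\mathbf{R}_\delta\nabla f$.

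The main subtlety I foresee is verifying that the boundary behaviour implicit in the definition of $\rsz_\delta$ is preserved through these manipulations, so that the invoked uniqueness of the resolvent really applies to the candidate solutions $-\Div w$ and $\partial_i v$. Since $\mathbf{R}_\delta$ is built componentwise from $\rsz_\delta$, both candidates inherit automatically the correct realization of the Laplacian; the whole argument is then a matter of combining the componentwise definition of $\mathbf{R}_\delta$, the distributional commutativity of partial derivatives with $\Delta$, and the fact that $\rsz_\delta$ is an injective linear operator on each of the $L^r$-scales it acts upon.
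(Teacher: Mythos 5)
Your computation is essentially the one in the paper: there the same commutation is carried out in weak form, testing against $\varphi\in C^\infty_c(D)$ and integrating by parts for $u\in(C^\infty(D))^d$, and the general case $u\in L^q(D)^d$ with $-\Div u\in L^1(D)$ is then reached by a density argument; your direct manipulation in $\mathcal{D}'(D)$ simply bypasses the smoothing step, which is a harmless simplification. Up to the point where you obtain $(I-\delta\Delta)(-\Div\mathbf{R}_\delta u)=-\Div u$ and $(I-\delta\Delta)(\partial_i\rsz_\delta f)=\partial_i f$ in the sense of distributions, everything is correct and parallels the printed argument.

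The gap is exactly the point you flag and then wave away. Uniqueness in the Br\'ezis--Strauss $L^1$ theory is uniqueness among solutions satisfying the boundary condition encoded in the chosen realization of $\Delta$; a distributional identity alone never identifies a function with the resolvent applied to the datum, since one may always add a distributional solution of $v=\delta\Delta v$. Moreover the boundary behaviour is \emph{not} inherited componentwise: if each $w_i=\rsz_\delta u_i$ vanishes on $\partial D$, then on $\partial D$ one has $\Div w=\sum_i(\partial_n w_i)\,n_i$, which has no reason to vanish, and likewise $\rsz_\delta f=0$ on $\partial D$ gives no control on $\nabla \rsz_\delta f$ there. Concretely, for the homogeneous Dirichlet realization (the one needed later so that $\rsz_\delta^k$ maps $H$ into $V_0$), taking $d=1$, $D=(0,1)$ and $u\equiv1$ gives $\rsz_\delta(-u')=0$ while $-(\rsz_\delta u)'\not\equiv0$, so the identification cannot follow ``automatically'' from the componentwise construction; one genuinely needs to specify which realization of $\Delta$ defines $\rsz_\delta$ and why the transported functions lie in its domain. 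To be fair, the paper's own proof closes with the same leap (``by definition of the resolvent'' applied to the tested identity), so your proposal is at the same level of rigor as the printed proof on this point --- but as written, the sentence ``both candidates inherit automatically the correct realization of the Laplacian'' is an assertion rather than an argument, and it is the only nontrivial part of the lemma.
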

\begin{proof}
Let us first assume that $u\in \left(C^\infty(D)\right)^d$:
then, using the definition of $\mathbf{R}_\delta$ and $\mathcal{R}_\delta$, integration by parts
and the fact that $\rsz_\delta$ commutes with $\Delta$,
for every $\varphi\in C^\infty_c(D)$ we have
\[
  \begin{split}
  \int_D{\left(-\Div u\right)\varphi}&=\int_D{u\cdot\nabla\varphi}=
  \sum_{i=1}^d\int_D{u_i\frac{\partial\varphi}{\partial x_i}}=
  \sum_{i=1}^d\int_D\left(\rsz_\delta u_i-\delta\Delta\rsz_\delta u_i\right)\frac{\partial\varphi}{\partial x_i}\\
  &=\int_D\mathbf{R}_\delta u\cdot\nabla\varphi+\delta\int_D\Delta(\Div\mathbf{R}_\delta u)\varphi
  =\int_D\left[-\Div\mathbf{R}_\delta u-\delta\Delta(-\Div\mathbf{R}_\delta u)\right]\varphi\,.
  \end{split}
\]
Hence, by definition of the resolvent, we deduce that $-\Div{\mathbf{R}_\delta u}=\rsz_\delta(-\Div u)$ 
for every $u\in \left(C^\infty(D)\right)^d$. At this point, if $u\in L^q(D)^d$ and $-\Div u\in L^1(D)$,
the first thesis follows by approximating $u$ with a sequence $\{u_n\}_{n\in\En}\subseteq \left(C^\infty(D)\right)^d$
such that $u_n\rarr u$ in $L^q(D)^d$ and $-\Div u_n\rarr -\Div u$ in $L^1(D)$. 
Finally, in a similar way, the second assertion is clearly true for every $f\in C^\infty(D)$: hence, 
given $f\in H^1(D)$, we can conclude by density approximating $f$ with a sequence $\{f_n\}_{n\in\En}\subseteq C^\infty(D)$.
\end{proof}

Now, for every $\delta\in(0,1)$, we introduce the operator
\[
  \Lambda_\delta^1:V^*_{div}\rarr V^*_{div}
\]
in the following way: for any given $f\in V^*_{div}$, with $f=-\Div u$ for a certain $u\in L^q(D)^d$, we set
$\Lambda_\delta^1f:=-\Div\mathbf{R}_\delta u$. Note that $\Lambda_\delta^1$ is well-defined:
indeed, if $f=-\Div u_1=-\Div u_2$, we have $-\Div(u_1-u_2)=0$ and by Lemma \ref{lam_del_lem}
we deduce that $0=\rsz_\delta(-\Div(u_1-u_2))=-\Div(\mathbf{R}_\delta(u_1-u_2))$,
so that $-\Div\mathbf{R}_\delta u_1=-\Div\mathbf{R}_\delta u_2$.
Secondly, we set
\[
  \Lambda_\delta^2:L^1(D)\rarr L^1(D)\,, \quad \Lambda^2_\delta:=\rsz_\delta\,.
\]
The first part of Lemma \ref{lam_del_lem} ensures that $\Lambda_\delta^1=\Lambda_\delta^2$ on 
the intersection $V^*_{div}\cap L^1(D)$:
hence, it is well-defined the operator
\beq
  \label{lam_del}
  \Lambda_\delta:=\Lambda_\delta^1\oplus\Lambda_\delta^2:V^*_{div}\oplus L^1(D)\rarr V^*_{div}\oplus L^1(D)
\eeq
such that
\beq
  \label{lam_del'}
  \Lambda_\delta(-\Div u)=-\Div\mathbf{R}_\delta u\,, 
  \quad \Lambda_\delta(f)=\rsz_\delta f \quad\text{for every } u\in L^q(D)^d\,,\; f\in L^1(D)\,,
\eeq
which is automatically linear.

We are now ready to build the approximations.
First of all, we choose $k\in\En$ as in \eqref{V0}, so that the $k$-th power $\rsz_\delta^k$
maps $H$ into $V_0\subseteq V\cap L^\infty(D)$.
At this point, 
we define
\beq
  \label{approx_del}
  X_\delta^\eps:=\rsz^k_\delta X^\eps\,, \quad W_\delta^\eps:=\rsz^k_\delta W^\eps_B\,, \quad
  \eta_\delta^\eps:=\mathbf{R}^k_\delta \eta^\eps\,, \quad \xi_\delta^\eps:= \rsz^k_\delta\xi^\eps\,, \quad
  X_{0}^\delta:=\rsz^k_\delta X_{0}\,:
\eeq
then, taking into account the properties of $\rsz_\delta$ and $\mathbf{R}_\delta$ and the second part of Lemma \ref{lam_del_lem},
thanks to conditions \eqref{eta_eps}--\eqref{xi_eps}, \eqref{X_eps_bis} and \eqref{cont_X_eps'}
we have as $\delta\searrow0$ that
\begin{gather}
  \label{del1}
  X_\delta^\eps(t)\rarr X^\eps(t) \quad\text{in } H \quad\text{for every } t\in[0,T]\,, 
  \quad X^\eps_\delta\rarr X^\eps \quad\text{in } L^p\left(0,T;V\right)\\
  \label{del2}
  W_\delta^\eps(t)\rarr W^\eps(t) \quad\text{in } H \quad\text{for every } t\in[0,T]\,,
  \quad W^\eps_\delta\rarr W^\eps_B \quad\text{in } L^p\left(0,T;V\right)\,,\\
  \label{del3}
  \eta_\delta^\eps\rarr \eta^\eps \quad\text{in } L^q\left(Q\right)^d\,, \qquad \xi_\delta^\eps\rarr\xi^\eps \quad\text{in } L^1(Q)\,,\\
  \label{del4}
  X^\delta_{0}\rarr X_{0} \quad\text{in } H\,.
\end{gather}
Now, applying the operator $\Lambda^k_\delta$
to equation \eqref{app_eps'}, we get for every $t\in[0,T]$ that
\beq
  \label{app_eps_del}
  \left(X_\delta^\eps-W_\delta^\eps\right)(t)-\int_0^t{\Div\eta^\eps_\delta(s)\,ds}+\int_0^t{\xi^\eps_\delta(s)\,ds}=
  X^\delta_{0}\,.
\eeq
With our choice of $k$, it now makes sense to test by $X_\delta^\eps-W_\delta^\eps$: it easily follows that
\beq
  \label{test_del}
  \frac{1}{2}\l|X^\eps_\delta(T)-W_\delta^\eps(T)\r|^2_H+\int_Q{\nabla\eta_\delta^\eps\cdot\nabla\left(X_\delta^\eps-W_\delta^\eps\right)}
  +\int_Q{\xi_\delta^\eps\left(X_\delta^\eps-W_\delta^\eps\right)}=\frac{1}{2}\l|X^{\delta}_{0}\r|^2_H\,,
\eeq
from which, taking into account \eqref{del1}--\eqref{del4}, we deduce that
\beq
  \label{lim_eps_del}
  \lim_{\delta\searrow0}\int_Q{\xi_\delta^\eps\left(X_\delta^\eps-W_\delta^\eps\right)}
  =\frac{1}{2}\l|X_{0}\r|^2_H-\frac{1}{2}\l|(X^\eps-W_B^\eps)(T)\r|_H^2-
  \int_Q{\nabla\eta^\eps\cdot\nabla\left(X^\eps-W_B^\eps\right)}\,.
\eeq
In order to evaluate the limit in the previous expression,
we take advantage of Vitali convergence theorem:
to this purpose,
thanks to \eqref{del1}--\eqref{del3}, we can assume with no restriction that 
$\xi^\eps_\delta\rarr\xi^\eps$ and $X_\delta^\eps-W_\delta^\eps\rarr X^\eps-W_B^\eps$ almost everywhere in $Q$.
Let us show that $\{\xi_\delta^\eps(X^\eps_\delta-W^\eps_\delta)\}_{\delta\in(0,1)}$ is uniformly integrable in $Q$:
by conditions \eqref{convex3}--\eqref{j_even} and thanks to the generalized Jensen inequality
for the positive operator $R_\delta$ (see \cite{jen1, jen2} for references), we have
\[
  \begin{split}
  \pm\xi^\eps_\delta(X^\eps_\delta-W^\eps_\delta)&\leq
  j\left(\pm(X^\eps_\delta-W^\eps_\delta)\right)+j^*(\xi^\eps_\delta)=
  j\left(X^\eps_\delta-W^\eps_\delta\right)+j^*(\xi^\eps_\delta)\\
  &\leq \rsz^k_\delta\left[j\left(X^\eps-W^\eps_B\right)+j^*(\xi^\eps)\right] \quad\text{a.e.~in } Q\,.
  \end{split}
\]
Now, since $j(X^\eps-W^\eps_B), j^*(\xi^\eps)\in L^1(Q)$ thanks to \eqref{j_l1} and \eqref{stoc_reg'},
the right hand side of the previous expression
converges in $L^1(Q)$ and consequently it is uniformly integrable in $Q$: we deduce that also 
$\{\xi^\eps_\delta(X^\eps_\delta-W^\eps_\delta)\}_{\delta\in(0,1)}$ is uniformly integrable in $Q$.
Hence, by Vitali convergence theorem, we infer that
\[
  \xi^\eps_\delta\left(X^\eps_\delta-W^\eps_\delta\right)\rarr \xi^\eps\left(X^\eps-W^\eps_B\right) \quad\text{in } L^1(Q) \quad\text{as }
  \delta\searrow0\,,
\]
so that passing to the limit in \eqref{lim_eps_del} we recover exactly \eqref{test_eps}.

\section{The generalized It\^o formula}
\label{B_app}
In this appendix, we prove a generalized It\^o formula, which is widely used in Sections~\ref{second_lim} and \ref{cont_dep}:
we collect the general result in the following proposition. Throughout the section, we assume the general setting 
\eqref{spaces}--\eqref{rel_V0}.

\begin{prop}
  \label{prop_ito}
  Assume the following conditions:
  \begin{gather}
    \label{ito1}
    Y_0\in L^2\left(\Omega, \f_0, \Pi; H\right)\,,\\
    \label{ito2}
    T\in L^2\left(\Omega\times(0,T); \cL_2(U,H)\right) \quad\text{progressively measurable}\,,\\
    \label{ito3}
    Y\in L^2\left(\Omega; L^\infty(0,T; H)\right)\cap L^p\left(\Omega\times(0,T); V\right)\,,\qquad
    Y\in C^0_w\left([0,T]; H\right) \quad\Pi\text{-a.s.}\,,\\
    \label{ito4}
    f\in L^q\left(\Omega\times(0,T)\times D\right)^d\,, \qquad 
    g\in L^1\left(\Omega\times(0,T)\times D\right)\,,\\
    \label{ito5}
    \exists\;\alpha>0\,:\quad j(\alpha Y)+j^*(\alpha g)\in L^1\left(\Omega\times (0,T)\times D\right)\,,\\
    \label{ito6}
    Y(t)-\int_0^t{\Div f(s)\,ds}+\int_0^t{g(s)\,ds}=Y_0+\int_0^t{T(s)\,dW_s} \quad\text{in } V_0^*
  \end{gather}
  for every $t\in[0,T]$, $\Pi$-almost surely.
  Then, the following It\^o formula holds
  \beq
    \label{ito_for}
    \begin{split}
    \frac{1}{2}\l|Y(t)\r|_H^2&+\int_0^t\int_{D}{f(s)\cdot\nabla Y(s)\,ds}+
    \int_0^t\int_{D}{g(s)Y(s)\,ds}\\
    &=\frac{1}{2}\l|Y_0\r|_H^2+\frac{1}{2}\int_0^t{\l|T(s)\r|_{\cL_2(U,H)}^2\,ds}+\int_0^t\left(Y(s), T(s)\,dW_s\right)
    \end{split}
  \eeq
  for every $t\in[0,T]$, $\Pi$-almost surely.
  Furthermore, if hypothesis \eqref{ito3} is replaced by the weaker condition
  \beq
  \label{ito3bis}
  Y\in L^\infty\left(0,T; L^2(\Omega; H)\right)\cap L^p\left(\Omega\times(0,T); V\right)\cap
  C^0_w\left([0,T]; L^2(\Omega; H)\right)\,,
  \eeq
  then instead of \eqref{ito_for} we have the following for every $t\in[0,T]$:
  \beq
    \label{ito_for'}
    \begin{split}
    \frac{1}{2}\l|Y(t)\r|_{L^2(\Omega; H)}^2&+\int_0^t\int_{\Omega\times D}{f(s)\cdot\nabla Y(s)\,ds}+
    \int_0^t\int_{\Omega\times D}{g(s)Y(s)\,ds}\\
    &=\frac{1}{2}\l|Y_0\r|_{L^2(\Omega; H)}^2+
    \frac{1}{2}\int_0^t{\l|T(s)\r|_{L^2(\Omega; \cL_2(U,H))}^2}\,ds\,.
    \end{split}
  \eeq
\end{prop}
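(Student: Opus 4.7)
The plan is to regularize equation \eqref{ito6} using the smoothing operator $\Lambda_\delta^k$ constructed in Appendix \ref{A_app} so as to produce a process taking values in $V_0$, apply the classical variational It\^o formula to the regularized equation, and then pass to the limit as $\delta\searrow0$. Setting
\[
  Y_\delta:=\rsz_\delta^k Y\,,\quad Y_0^\delta:=\rsz_\delta^k Y_0\,,\quad f_\delta:=\mathbf{R}_\delta^k f\,,\quad g_\delta:=\rsz_\delta^k g\,,\quad T_\delta:=\rsz_\delta^k T\,,
\]
and applying $\Lambda_\delta^k$ to \eqref{ito6}, Lemma \ref{lam_del_lem} gives the regularized identity
\[
  Y_\delta(t)-\int_0^t{\Div f_\delta(s)\,ds}+\int_0^t{g_\delta(s)\,ds}=Y_0^\delta+\int_0^t{T_\delta(s)\,dW_s}
\]
for every $t\in[0,T]$, $\Pi$-almost surely. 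With $k$ chosen as in \eqref{V0}, $\rsz_\delta^k$ maps $H$ into $V_0\embed H^1_0(D)$ and produces corresponding gains of integrability for $f_\delta,g_\delta,T_\delta$, so the classical variational It\^o formula in the Gelfand triple $H^1_0(D)\embed H\embed H^{-1}(D)$ (see e.g.~\cite[Thm.~4.2.5]{prevot-rock}) applies directly to the regularized equation and yields the associated It\^o identity for $\tfrac12\l|Y_\delta(t)\r|_H^2$.

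The second step is to pass to the limit as $\delta\searrow0$. From the contraction and strong-operator convergence properties of $\rsz_\delta$ and $\mathbf{R}_\delta$ on $L^r$-spaces one obtains at once $Y_\delta(t)\rarr Y(t)$ in $H$ for every $t$ pathwise, $Y_\delta\rarr Y$ in $L^p(0,T;V)$, $f_\delta\rarr f$ in $L^q(Q)^d$, $g_\delta\rarr g$ in $L^1(Q)$, $Y_0^\delta\rarr Y_0$ in $H$ and $T_\delta\rarr T$ in $L^2(\Omega\times(0,T);\cL_2(U,H))$; combined with It\^o's isometry the latter also gives convergence of the stochastic integrals in $L^2(\Omega)$, uniformly in $t$ via Doob's inequality. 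Hence every term in the regularized It\^o identity converges to its expected limit, except for the potentially problematic nonlinear term $\int_0^t\int_D g_\delta Y_\delta$.

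The main obstacle is precisely this term: since $g_\delta$ is only bounded in $L^1(Q)$ and $Y_\delta$ only in $L^\infty(0,T;H)\cap L^p(0,T;V)$, uniform integrability of $g_\delta Y_\delta$ is not automatic. Here assumption \eqref{ito5} is crucial. Combining the Fenchel inequality applied to $\alpha g_\delta$ and $\alpha Y_\delta$, the evenness \eqref{j_even} of $j$ (hence of $j^*$), and the generalized Jensen inequality for the sub-Markovian operator $\rsz_\delta^k$ (cf.~\cite{jen1, jen2}), exactly as in the proof of \eqref{test_eps} in Appendix \ref{A_app}, one obtains the pointwise domination
\[
  \left|g_\delta Y_\delta\right|\leq\frac{1}{\alpha^2}\bigl[j(\alpha Y_\delta)+j^*(\alpha g_\delta)\bigr]\leq\frac{1}{\alpha^2}\rsz_\delta^k\bigl[j(\alpha Y)+j^*(\alpha g)\bigr]\,.
\]
By \eqref{ito5} the right-hand side converges to $\tfrac{1}{\alpha^2}[j(\alpha Y)+j^*(\alpha g)]$ both in $L^1(Q)$ pathwise and in $L^1(\Omega\times Q)$, so $\{g_\delta Y_\delta\}_\delta$ is uniformly integrable in both senses; combined with almost everywhere convergence of a subsequence, Vitali's theorem yields $g_\delta Y_\delta\rarr gY$ in $L^1$ and hence \eqref{ito_for}.

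For the second version of the formula, one works under the weaker hypothesis \eqref{ito3bis} and takes expectations throughout: the stochastic integral becomes a true martingale with zero mean thanks to the integrability in \eqref{ito2} and \eqref{ito3bis}, while the Jensen-based domination above carries directly to the $L^1(\Omega\times Q)$ setting, producing \eqref{ito_for'}.
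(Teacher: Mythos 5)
Your overall strategy coincides with the paper's: regularize \eqref{ito6} with $\Lambda_\delta^k$, apply the classical variational It\^o formula to the $V_0$-valued regularization, dominate $g_\delta Y_\delta$ through the Fenchel--Young inequality, the evenness of $j$, and the generalized Jensen inequality for $\rsz_\delta^k$, and conclude by Vitali's theorem. However, two steps concerning the stochastic integral are not justified at the stated level of integrability. First, you claim that It\^o's isometry together with Doob's inequality yields convergence of $\int_0^t(Y_\delta(s),T_\delta(s)\,dW_s)$ in $L^2(\Omega)$ uniformly in $t$. That would require
\[
\exval\int_0^T\l|Y_\delta(s)\r|_H^2\,\l|(T_\delta-T)(s)\r|_{\cL_2(U,H)}^2\,ds\;+\;\exval\int_0^T\l|(Y_\delta-Y)(s)\r|_H^2\,\l|T(s)\r|_{\cL_2(U,H)}^2\,ds\;\rarr\;0\,,
\]
which does not follow from $Y\in L^2(\Omega;L^\infty(0,T;H))$ and $T\in L^2(\Omega\times(0,T);\cL_2(U,H))$ alone: the product $\l|Y\r|^2_{L^\infty(0,T;H)}\l|T\r|^2_{L^2(0,T;\cL_2(U,H))}$ need not be integrable on $\Omega$ without fourth moments of $Y$ or extra integrability of $T$. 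The argument that works at this level is the one the paper uses: split the difference of the stochastic integrals as $\int_0^t(Y_\delta,(T_\delta-T)\,dW)+\int_0^t(Y_\delta-Y,T\,dW)$, apply the Davis (exponent-one Burkholder--Davis--Gundy) inequality, and then the Cauchy--Schwarz inequality in $\Omega$ for the first term and dominated convergence for the second; this gives convergence in $L^1(\Omega;L^\infty(0,T))$ and hence, along a subsequence, for every $t$ $\Pi$-almost surely, which is all that is needed to pass to the limit pathwise.

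Second, in the proof of \eqref{ito_for'} you assert that the stochastic integral in the regularized identity "becomes a true martingale with zero mean thanks to the integrability in \eqref{ito2} and \eqref{ito3bis}". Under \eqref{ito3bis} one only has $Y\in L^\infty(0,T;L^2(\Omega;H))$, so $\exval\int_0^T\l|Y_\delta(s)\r|_H^2\l|T_\delta(s)\r|^2_{\cL_2(U,H)}\,ds$ need not be finite and the martingale property of $\int_0^\cdot(Y_\delta,T_\delta\,dW)$ is not automatic. The paper circumvents this by localization: the stochastic integral is a local martingale, so one stops the regularized It\^o identity at a localizing sequence of stopping times $\tau_n$, takes expectations (where the stopped stochastic integral has zero mean), and lets $n\rarr\infty$ using dominated convergence on the remaining terms, before finally letting $\delta\searrow0$ as in the first part. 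With these two corrections your argument becomes the paper's proof; the treatment of the nonlinear term $g_\delta Y_\delta$, which is the genuinely delicate point, is handled exactly as in Appendix \ref{A_app} and is correct.
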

\begin{proof}
  We proceed exactly in the same way as in Appendix \ref{A_app}. If 
  $k$ is given by \eqref{V0} and for every $\delta\in(0,1)$, $\rsz_\delta$ and $\mathbf{R}_\delta$ are as in Appendix \ref{A_app},
  we define
  \[
  Y_\delta:=\rsz^k_\delta Y\,, \quad T_\delta:=\rsz^k_\delta T\,, \quad
  f_\delta:=\mathbf{R}^k_\delta f\,, \quad g_\delta:= \rsz^k_\delta g\,, \quad
  Y_{0}^\delta:=\rsz^k_\delta Y_{0}\,:
  \]
  hence, thanks to \eqref{ito1}--\eqref{ito4} and Lemma \ref{lam_del_lem} we have as $\delta\searrow0$
  \begin{gather}
  \label{del1'}
  Y_\delta(t)\rarr Y(t) \quad\text{in } H \quad\text{for every } t\in[0,T]\,, \quad\Pi\text{-almost surely}\,,\\
  \label{del1'bis}
  Y_\delta\rarr Y \quad\text{in } L^p\left(\Omega\times (0,T);V\right)\\
  \label{del2'}
  T_\delta \rarr T \quad\text{in } L^2\left(\Omega\times(0,T); \cL_2(U,H)\right)\,,\\
  \label{del3'}
  f_\delta \rarr f \quad\text{in } L^q\left(\Omega\times Q\right)^d\,, \qquad g_\delta\rarr g \quad\text{in } L^1(\Omega\times Q)\,,\\
  \label{del4'}
  Y^\delta_{0}\rarr Y_{0} \quad\text{in } L^2(\Omega; H)\,.
  \end{gather}
  Consequently, if we apply the operator $\Lambda_\delta^k$ to \eqref{ito6}, taking definition \eqref{lam_del}--\eqref{lam_del'}
  into account, we have $\Pi$-almost surely that
  \[
    Y_\delta(t)-\int_0^t{\Div f_\delta(s)\,ds}+\int_0^t{g_\delta(s)\,ds}=Y^\delta_0+\int_0^t{T_\delta(s)\,dW_s} \quad\text{in } H\,,
    \quad\text{for every } t\in[0,T]\,.
  \]
  Now, with our choice of $k$, we can apply the classical It\^o formula (see \cite[Thm.~4.2.5]{prevot-rock} for example) to recover
  that $\Pi$-almost surely, for every $t\in[0,T]$,
  \beq
  \label{ito_approx}
  \begin{split}
    \frac{1}{2}\l|Y_\delta(t)\r|_H^2&+\int_0^t\int_D{f_\delta(s)\cdot\nabla Y_\delta(s)\,ds}+\int_0^t\int_D{g_\delta(s)Y_\delta(s)\,ds}\\
    &=\frac{1}{2}\l|Y^\delta_0\r|_H^2+\frac{1}{2}\int_0^t{\l|T_\delta(s)\r|_{L_2(U,H)}^2\,ds}+\int_0^t{\left(Y_\delta(s), T_\delta(s)\,dW_s\right)}\,.
    \end{split}
  \eeq
  Now, let us focus on the stochastic integral: we have
  \[
  \begin{split}
  &\int_0^t\left(Y_\delta(s), T_\delta(s)\,dW_s\right)-\int_0^t\left(Y(s), T(s)\,dW_s\right)\\
  &\qquad\qquad=\int_0^t\left(Y_\delta(s), (T_\delta-T)(s)\,dW_s\right)+\int_0^t\left((Y_\delta-Y)(s), T(s)\,dW_s\right)\,,
  \end{split}
  \]
  where thanks to the Davis inequality and \eqref{del1'bis}--\eqref{del2'} we have
  \[
  \begin{split}
  \exval\sup_{t\in[0,T]}\left|\int_0^t\left(Y_\delta(s), (T_\delta-T)(s)\,dW_s\right)\right|&\lesssim
  \exval\left[\left(\int_0^T{\l|Y_\delta(s)\r|^2_H\l|(T_\delta-T)(s)\r|^2_{\cL_2(U,H)}\,ds}\right)^{1/2}\right]\\
  &\lesssim\l|T_\delta-T\r|_{L^2(\Omega\times(0,T); \cL_2(U,H))}\rarr0
  \end{split}
  \]
  and, by the dominated convergence theorem, also
  \[
  \begin{split}
  \exval\sup_{t\in[0,T]}\left|\int_0^t\left((Y_\delta-Y)(s), T_s\,dW_s\right)\right|^2&\lesssim
  \exval\left[\left(\int_0^T{\l|(Y_\delta-Y)(s)\r|^2_H\l|T_s\r|^2_{\cL_2(U,H)}\,ds}\right)^{1/2}\right]\rarr0\,.
  \end{split}
  \]
  Hence, we deduce that $\int_0^\cdot(Y_\delta(s), T_\delta(s)\,dW_s)\rarr\int_0^\cdot(Y(s), T(s)\,dW_s)$ in
  $L^2(\Omega; L^\infty(0,T))$, so that consequently (at least for a subsequence)
  \[
    \int_0^t\left(Y_\delta(s), T_\delta(s)\,dW_s\right)\rarr\int_0^t\left(Y(s), T(s)\,dW_s\right) \quad\text{for every } t\in[0,T]\,, 
    \quad\Pi\text{-almost surely}\,.
  \]
  Hence, letting $\delta\searrow0$ and taking into account \eqref{del1'}--\eqref{del4'}, $\Pi$-almost surely we have
  \beq
  \label{lim_del}
  \begin{split}
  \lim_{\delta\searrow0}\int_{(0,t)\times D}{g_\delta Y_\delta}&=\frac{1}{2}\l|Y_0\r|^2_H
  +\frac{1}{2}\int_0^{t}{\l|T(s)\r|_{\cL_2(U,H)}^2\,ds}+\int_0^t\left(Y(s), T(s)\,dW_s\right)\\
  &-\frac{1}{2}\l|Y(t)\r|^2_{H}-\int_{(0,t)\times D}{f_\delta\cdot\nabla Y}
  \quad\text{for every } t\in[0,T]\,:
  \end{split}
  \eeq
  we evaluate the limit on the left hand side using Vitali's convergence theorem.
  To this purpose, by \eqref{del1'} and \eqref{del3'} we can assume with no restriction that $Y_\delta\rarr Y$ and $g_\delta\rarr g$
  almost everywhere in $\Omega\times(0,t)\times D$; 
  moreover, thanks to conditions \eqref{convex3}--\eqref{j_even} and the generalized Jensen inequality
  for positive operators (see \cite{jen1, jen2}), we have
  \[
  \begin{split}
  \pm \alpha^2g_\delta Y_\delta&\leq
  j\left(\pm\alpha Y_\delta\right)+j^*\left(\alpha g_\delta\right)=
  j\left(\alpha Y_\delta\right)+j^*\left(\alpha g_\delta\right)
  \leq \rsz^k_\delta\left[j(\alpha Y)+j^*(\alpha g)\right]\,.
  \end{split}
  \]
  Thanks to \eqref{ito5} and the properties of $\rsz_\delta$, the term on the right hand side converges in $L^1(\Omega\times(0,t)\times D)$,
  hence it is uniformly integrable: consequently, we deduce that also
  $\{g_\delta Y_\delta\}_{\delta\in(0,1)}$ is
  uniformly integrable, and Vitali's convergence theorem implies that
  \[
  g_\delta Y_\delta\rarr gY \quad\text{in } L^1\left(\Omega\times(0,t)\times D\right)\,,   \quad\text{as } \delta\searrow0\,,
  \]
  so that passing to the limit in \eqref{lim_del} we obtain \eqref{ito_for}.\\
  To show \eqref{ito_for'}, we proceed in a very similar way: note that since \eqref{ito3}
  is replaced by \eqref{ito3bis}, then instead of \eqref{del1'} we have
  \[Y_\delta(t)\rarr Y(t) \quad\text{in } L^2(\Omega; H)\,, \quad\text{for every } t\in[0,T]\,.\]
  Once we have obtained \eqref{ito_approx} as before, we observe that
  the stochastic integral in \eqref{ito_approx} is a local martigale, so that
  there exists a sequence of increasing stopping times $\{\tau_n\}_{n\in\En}$ such that $\tau_n\nearrow\infty$
  and the corresponding stopped processes are martingales: hence, stopping \eqref{ito_approx} at time $\tau_n$, taking 
  expectations and then letting $n\rarr\infty$, thanks to dominated convergence theorem we directly obtain for every $t\in[0,T]$
  \[
  \begin{split}
    \frac{1}{2}\l|Y_\delta(t)\r|_{L^2(\Omega; H)}^2&+\int_0^t\int_{\Omega\times D}{f_\delta(s)\cdot\nabla Y_\delta(s)\,ds}+
    \int_0^t\int_{\Omega\times D}{g_\delta(s)Y_\delta(s)\,ds}\\
    &=\frac{1}{2}\l|Y_0^\delta\r|_{L^2(\Omega; H)}^2+
    \frac{1}{2}\int_0^t{\l|T_\delta(s)\r|_{L^2(\Omega; \cL_2(U,H))}^2}\,ds\,.
    \end{split}
  \]
  At this point, \eqref{ito_for'} follows as before letting $\delta\searrow0$ in the previous equation.
\end{proof}

%%%%%%%%%%%%%%%%%%%%%%%%%%%%%%%%%%%%%%%%%%%%%%%%%%%%%%%%%%

\def\cprime{$'$}

\end{document}